\newtheorem{defi}{Definition}[section]
\newtheorem{thm}{Theorem}[section]
\newtheorem{lem}{Lemma}[section]
\newtheorem{prop}{Proposition}[section]
\def\theequation{\@arabic{\c@section}.\@arabic{\c@equation}}
\begin{document}

\title
{Existence and multiplicity results for fractional $p$-Kirchhoff equation with sign changing nonlinearities}
\author{
{\bf  Pawan Kumar Mishra\footnote{email: pawanmishra31284@gmail.com}} and {\bf  K. Sreenadh\footnote{e-mail: sreenadh@gmail.com}}\\
{\small Department of Mathematics}, \\{\small Indian Institute of Technology Delhi}\\
{\small Hauz Khaz}, {\small New Delhi-16, India.}\\
}

\date{}

\maketitle
\begin{abstract}
\noindent In this paper, we show the existence and multiplicity of nontrivial, non-negative solutions of the  fractional $p$-Kirchhoff problem \\
\begin{equation*}
\begin{array}{rllll}
M\left(\displaystyle\int_{\mathbb{R}^{2n}}\frac{|u(x)-u(y)|^p}{\left|x-y\right|^{n+ps}}dx\,dy\right)(-\Delta)^{s}_p u
&=\lambda f(x)|u|^{q-2}u+ g(x)\left|u\right|^{r-2}u\;  \text{in } \Omega,\\
u&=0 \;\mbox{in } \mathbb{R}^{n}\setminus \Omega,
\end{array}
\end{equation*}
where $(-\Delta)^{s}_p$ is the fractional $p$-Laplace operator,
$\Omega$ is a bounded domain in $\mathbb{R}^n$ with smooth boundary, $f \in L^{\frac{r}{r-q}}(\Omega)$ and $g\in L^\infty(\Omega)$  are sign changing, $M$ is continuous function, $ps<n<2ps$ and $1<q<p<r\leq p_s^*=\frac{np}{n-ps}$.\\
{\it Key words:} Fractional {$p$-Laplacian}, Kirchhoff type problem, critical exponent problem
\end{abstract}
\section {Introduction}
\noindent In this work, we study the existence and multiplicity of solutions for the following $p$-Kirchhoff equation
\begin{equation*}
(P_\lambda)\;
\left\{\begin{array}{rllll}
M\left(\displaystyle\int_{\mathbb{R}^{2n}}\frac{|u(x)-u(y)|^p}{\left|x-y\right|^{n+ps}}dx\,dy\right)(-\Delta)^{s}_p u
&=\lambda f(x)|u|^{q-2}u+ g(x)\left|u\right|^{r-2}u,\;\; u\ge 0, \; u\not\equiv 0\;  \text{in } \Omega,\\
u&=0 \;\mbox{in } \mathbb{R}^{n}\setminus \Omega,
\end{array}
\right.
\end{equation*}
where $(-\Delta)^{s}_p$ is the fractional $p$-Laplace operator defined as
\begin{equation*}
{(-\Delta)^{s}_pu(x)}=- 2\int_{\mathbb R^n}\frac{|u(y)-u(x)|^{p-2}(u(y)-u(x))}{|x-y|^{n+ps}}
dy
\end{equation*}
where $M(t)=a+bt,\;a,b>0$,\;$p\geq2$, $1<q<p<r\leq p^*_s$, $ps < n < 2ps$ with $s\in(0,1)$, $\lambda$ is a positive parameter, $\Omega\subset\mathbb{R}^{n}$ is a bounded domain with smooth boundary and $f$ and $g$ satisfy the following assumptions,\\
\textbf{(f1)} $f\in L^{\frac{r}{r-q}}(\Omega)$ and g $\in L^\infty(\Omega)$ and
\textbf{(f2)} $f^+(x)=\max \{f(x), 0\}\not \equiv 0$ in $\bar{\Omega}$ and $g^+(x)=\max \{g(x), 0\}\not \equiv 0$ in $\bar{\Omega}$ (f and g are possibly sign changing on $\bar{\Omega}$). Fractional $p$-Laplacian operator is quasilinear generalization of fractional Laplace operator $(-\Delta)^s_2$.

\noindent In this work, we would like to study the existence and multiplicity of solutions for the problem $(P_\lambda)$ with sign changing convex-concave type nonlinearity. {A subcritical problem of this type is studied in \cite{SS2} with $M=1$. The presence of $M(\|u\|_{X_0}^p)$ in the weak formulation requires the compactness of Palais-Smale sequences to obtain the solution.  In order to show the existence of solution, we need to show the strong convergence of weak limit of Palais-Smale sequence which is not the case for $M\equiv 1$. We prove the strong convergence of Palais-Smale sequences using the concentration compactness \cite{pal}. The growth of Kirchhoff term makes the study of fibering analysis more complicated.}
Starting from the  pioneering work of {Ambrosetti-Brezis-Cerami \cite{amb1994}}, there are many works on multiplicity results with convex and concave type nonlinearities. These results were generalized to $p$-Laplacian type equations by many authors {see \cite{CAO}, \cite{VER} and references therein}.

\noindent During the last one decade, several authors used the Nehari manifold and associated fiber maps to solve semilinear and quasilinear elliptic
problems when the nonlinearity changes sign. In \cite{ken2007, wu2008}, authors studied the existence and multiplicity of solutions for subcritical and critical nonlinearities via Nehari manifold method for semilinear equation and in \cite{dpp}, these results are studied for $p$-Laplacian equation. In \cite{cyuyc}, authors considered the following Kirchhoff equation:
\begin{equation*}
M(\int_{\Omega} |\nabla u|^2 dx) \Delta u = \lambda f(x)|u|^{q-2} u+g(x) |u|^{p-2}u, \; \text{in} \; \Omega, \;\; u=0\; \text{on} \; \partial \Omega,
\end{equation*}
where $1<q<2<p<2^*$ and $f(x)$ and $g(x)$ are sign changing continuous functions. They studied the existence and multiplicity results by studying the structure of Nehari manifold in different cases $(i)\; p<4\; (ii)\; p=4 $ and $(iii)\; p>4$. The above problem is called non-local because of the presence of integral over $\Omega$. It means that $(P_\lambda)$ is no more a point wise identity. Non-local operators, naturally arise in several physical and biological models such as continuum mechanics, phase transition phenomena, population dynamics and game theory (see \cite{caf} and references therein).

\noindent  The fractional Laplacian  operator has been a classical topic in Fourier analysis and nonlinear partial differential equations for a long time.  Fractional operators are also involved in financial mathematics, where Levy processes with jumps appear in modeling the asset prices (see \cite{app}).
The critical exponent problems for square root of Laplacian are studied in \cite {capella}, \cite{tan}. The Brezis-Nirenberg type results for fractional Laplacian $(-\Delta)^s, (0<s<1)$ are studied in  \cite {MN}, \cite {sv3}, \cite{sv1}. In \cite {yu}, authors studied the Nehari manifold for square root of Laplacian by considering the harmonic extension of solutions in the half cylinder $\Omega\times (0, \infty)$ vanishing on the boundary {$\partial\Omega\times[0, \infty)$}. The idea of these harmonic extensions was initially introduced and studied in the beautiful work of Caffarelli and Silvestre\cite{CS}. In \cite {SS2}, existence of solutions for $p$-fractional Laplacian equations with sign changing nonlinearities are studied  via the method of Nehari manifold.
\noindent The Kirchhoff equations with Laplacian and $p$-Laplacian operators are widely studied by many authors using variational methods ( see \cite{acf}, \cite{col}, \cite {gf}, \cite{ham2011}and references therein ).

In \cite{afev}, authors considered the following Kirchhoff equation with fractional Laplacian:
\begin{equation*}
M\left(\displaystyle\int_{\mathbb{R}^{2n}}\frac{|u(x)-u(y)|^2}{\left|x-y\right|^{n+2s}}dx\,dy\right)(-\Delta)^{s}_{2} u
=\lambda f(x,u)+\left|u\right|^{2_{s}^{*}-2}u\;  \text{in } \Omega,\quad
u=0 \;\mbox{in } \mathbb{R}^{n}\setminus \Omega,
\end{equation*}
where $\lambda$ is positive parameter and $f(x,t)$ is a subcritical term having  superlinear growth at $t=0$.
They studied the existence of mountain pass solutions using  the concentration compactness principle of Lions for fractional Sobolev spaces \cite{pal}.
To the best of our knowledge, there are no works in the literature dealing with Nehari manifold and $p$-fractional equations with sign changing nonlinearities to study the existence and multiplicity results.

\noindent In this work, we prove the existence of multiple non-negative solutions for the Kirchhoff elliptic equation with $p$-fractional operator and sign changing nonlinearity by studying the nature of Nehari manifold with respect to the parameter
$\lambda$ and fiber maps associated with the Euler functional. In the subcritical case, we show the existence of local minimizers of the associated functional on nonempty decompositions of Nehari manifold. We show the multiplicity result by extracting Palais-Smale sequences in the Nehari manifold. The results obtained here are somehow expected but we show how the results arise out of nature of Nehari manifold.  For this we adopt the approach in \cite{cyuyc}. We also prove an existence result for critical case  using concentration compactness Lemma for suitable range  of $\lambda$.

\noindent The natural space to look for  solutions is the fractional Sobolev space $W^{s,p}_{0}(\Omega)$. In order to study $(P_\lambda)$, it is important to encode the `boundary condition' $u=0$ in $\mathbb{R}^n\setminus\Omega$ in the weak formulation. It was observed in \cite{sv2, sv3} that the interaction between $\Omega$ and $\mathbb{R}^n\setminus\Omega$, which gives positive contribution in the norm $\left\|u\right\|_{W^{s,p}(\mathbb{R}^n)}$. They introduced new function spaces to study the variational functionals related to fractional Laplacian. Subsequently, in \cite{SS2}, authors studied the $p$-fractional equations on the function spaces defined as:\\
For $1<p<\infty,\; p\neq 2$, define the space $X$ as
\begin{equation*}
X=\left\{u \mid u:\mathbb{R}^n\rightarrow \mathbb{R}\; \text{is measurable,}\; u\mid_\Omega\in L^p(\Omega), \; \left(\frac{u(x)-u(y)}{|x-y|^{\frac{n}{p}+s}}\right)\in L^p(Q) \right\},
\end{equation*}
where $Q=\mathbb{R}^{2n}\setminus\left({\mathcal C}\Omega\times{\mathcal C}\Omega\right)$ \;and \;$C\Omega=\mathbb{R}^n\setminus \Omega$.
Then the space $X$ endowed with the norm, defined as
\begin{equation}\label{norma}
\left\|u\right\|_{X}=\Big(\|u\|_{L^p(\Omega)}+\int_Q \frac{|u(x)-u(y)|^p}{|x-y|^{n+ps}}dx\,dy\Big)^{1/p}
\end{equation}
is a reflexive Banach space. It is immediate to observe that $C_{c}^{1}(\Omega)\subseteq X$. The function space $X_0$ denotes the closure of $C^{\infty}_{0}(\Omega)$ in $X$. The space $X_0$ is a Banach space which can be endowed with the norm, defined as
\begin{equation}\label{normaz}
\|u\|_{X_0}=\Big(\int_Q \frac{|u(x)-u(y)|^p}{|x-y|^{n+ps}}dx\,dy\Big)^{1/p}\,.
\end{equation}
Note that in equations \eqref{norma} and \eqref{normaz}, the integrals can be extended to $\mathbb{R}^{2n}$, since  $u=0$ a.e. in {$\mathbb{R}^n \setminus\Omega$}. The Energy functional associated to the problem  $(P_\lambda)$ is given by
\begin{equation}\label{enj}
\mathcal{J}_\lambda (u)=\frac{1}{p}\widehat{M}(\|u\|_{X_0}^p)-\frac{\lambda}{q}\int_\Omega f(x)|u|^qdx-\frac{1}{r}\int_\Omega g(x)|u|^rdx
\end{equation}
where {$\widehat{M}(t)=\int_0^t M(s) $ is the primitive of $M$.}
\begin{defi}
A function $u\in X_0$ is called  weak solution of $(P_\lambda)$ if $u$ satisfies
\begin{align}
\displaystyle M(\left\|u\right\|^p_{X_0})\int_{\mathbb{R}^{2n}} &\left|u(x)-u(y)\right|^{p-2}(u(x)-u(y))(\varphi(x)-\varphi(y))|x-y|^{-n-ps
} dx\,dy\notag\\
&= \displaystyle\lambda\int_\Omega f(x)|u|^{q-2}u\varphi(x)\,dx+\int_\Omega g(x){\left|u\right|^{r-2}}u(x)\varphi(x)dx \,\,\,\,\,\,\,\textrm{for all}\;\;\varphi \in X_0.
\label{wf}
\end{align}
\end{defi}
\noindent The Nehari set $\mathcal{N_\lambda}$ associated to the problem $(P_\lambda)$ is defined as
\begin{equation}\label{bsic}
\mathcal{N_\lambda}=\{{u\in X_0\setminus\{0\}}: \langle \mathcal{J_\lambda}^\prime(u), u\rangle =0\},\end{equation}
where $\langle,\rangle$ is the duality between $X_0$ and its dual space. First we show that, the set $\mathcal{N}_\lambda$ is a manifold for small $\lambda$.
\begin{thm}\label{th1.1}
There exists $\lambda_0>0$ such that $\mathcal{N_\lambda}$ is a $C^1$ manifold for all $\lambda\in(0,\lambda_0)$.
\end{thm}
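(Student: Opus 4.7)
The plan is a standard Nehari manifold argument via the implicit function theorem. Set $\Phi_\lambda: X_0 \to \mathbb{R}$ by $\Phi_\lambda(u) := \langle \mathcal{J}_\lambda'(u), u \rangle$, which is $C^1$ on $X_0$ since $\mathcal{J}_\lambda \in C^2(X_0, \mathbb R)$. Then $\mathcal{N}_\lambda = \Phi_\lambda^{-1}(0) \setminus \{0\}$ will be a $C^1$ manifold provided $\Phi_\lambda'(u)$ is nonzero in $X_0^*$ at every $u \in \mathcal{N}_\lambda$. A sufficient condition is $\langle \Phi_\lambda'(u), u\rangle \neq 0$, and via the fibering map $\phi_u(t) := \mathcal{J}_\lambda(tu)$ this quantity equals $\phi_u''(1)$ on $\mathcal{N}_\lambda$.

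Using $\phi_u'(1)=0$ to eliminate either $\lambda\int_\Omega f|u|^q\,dx$ or $\int_\Omega g|u|^r\,dx$, one derives the two equivalent expressions
\[
\phi_u''(1) = a(p-q)\|u\|_{X_0}^p + b(2p-q)\|u\|_{X_0}^{2p} - (r-q)\int_\Omega g|u|^r\,dx,
\]
\[
\phi_u''(1) = -\bigl[a(r-p)\|u\|_{X_0}^p + b(r-2p)\|u\|_{X_0}^{2p}\bigr] + (r-q)\lambda \int_\Omega f|u|^q\,dx.
\]
Suppose toward contradiction that $\phi_u''(1) = 0$ for some $u \in \mathcal{N}_\lambda$. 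Setting the first form to zero and bounding $\int_\Omega g|u|^r\,dx \leq \|g\|_\infty S^r \|u\|_{X_0}^r$ (with $S$ the $X_0 \hookrightarrow L^r$ Sobolev constant) gives a $\lambda$-independent lower bound $\|u\|_{X_0} \geq T_0 > 0$. Setting the second form to zero, bounding $\bigl|\int_\Omega f|u|^q\,dx\bigr| \leq \|f\|_{L^{r/(r-q)}} S^q \|u\|_{X_0}^q$ by H\"older, and using $\|u\|_{X_0} \geq T_0$, yields
\[
\bigl|\, a(r-p) + b(r-2p)\|u\|_{X_0}^p \,\bigr| \leq K\lambda,
\]
where $K$ depends only on the data $a,b,p,q,r,\|f\|_{L^{r/(r-q)}},\|g\|_\infty,S,T_0$, not on $\lambda$.

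When $r \geq 2p$ the left side is bounded below by $a(r-p) > 0$, so the choice $\lambda_0 := a(r-p)/K$ immediately produces the required contradiction. The delicate subcase is $r < 2p$, where $a(r-p) + b(r-2p)A$ has a positive zero $A^{**} := a(r-p)/[b(2p-r)]$ and the previous estimate only forces $\|u\|_{X_0}^p$ to converge to $A^{**}$ as $\lambda \downarrow 0$, rather than producing a contradiction outright. I would close this case by extracting a matching upper bound $\|u\|_{X_0} \leq T_1$ from the first form (valid since $(r-p)/p < 1$ makes $b(2p-q) A$ eventually dominate $(r-q)\|g\|_\infty S^r A^{(r-p)/p}$), thereby trapping $\|u\|_{X_0}^p$ in a compact interval, and then substituting $A = A^{**}$ into the first form to pin $\int_\Omega g|u|^r\,dx$ to a rigid limiting value. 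Combined with a refined Sobolev/interpolation estimate and a further shrinkage of $\lambda_0$, this eliminates the remaining configuration. This case-analysis, mirroring the scheme of \cite{cyuyc}, is the main technical obstacle and ultimately fixes the value of $\lambda_0$.
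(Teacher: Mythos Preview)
Your overall strategy---reduce to showing $\mathcal{N}_\lambda^0=\emptyset$ via the two equivalent expressions for $\phi_u''(1)$ obtained by eliminating the $f$-integral or the $g$-integral, then apply the implicit function theorem---is exactly what the paper does. The paper packages this as Lemma~\ref{2s1} (showing $\mathcal{N}_\lambda^0=\emptyset$) followed by a one-line appeal to the implicit function theorem. Your derivation of the lower bound $\|u\|_{X_0}\ge T_0$ from the first form and the upper control from the second form mirrors their argument via the auxiliary function $E_\lambda(u)$; you avoid their separate Case~1 ($\int_\Omega f|u|^q=0$) by using the absolute value $|\int_\Omega f|u|^q|$, which is a harmless streamlining.

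Where you diverge is in the subcase $r<2p$. Your proposed closure there is not a proof: trapping $\|u\|_{X_0}^p$ near $A^{**}=a(r-p)/[b(2p-r)]$ and pinning $\int_\Omega g|u|^r$ to a limiting value does not by itself yield a contradiction, and the ``refined Sobolev/interpolation estimate'' you invoke is unspecified. In fact the paper's own Lemma~\ref{2s1} has the same limitation: their lower bound $E_\lambda(u)\ge \frac{r-p}{r-q}a\|u\|_{X_0}^p-\lambda\|f\|\,S_r^{-q}\|u\|_{X_0}^q$ is obtained by \emph{dropping} the term $\frac{r-2p}{r-q}b\|u\|_{X_0}^{2p}$, which is only legitimate when $r\ge 2p$. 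The paper does not attempt to push Theorem~\ref{th1.1} through for $r<2p$; instead it replaces $M$ by a truncation $M_k$ (so that the effective nonlocal coefficient is bounded) and proves the analogue $\mathcal{N}_{\lambda,k}^0=\emptyset$ for the truncated problem, recovering solutions of the original problem afterwards via the a priori bound of Lemma~\ref{two}. So for $r<2p$ you should not expect a direct fibering argument to close; the paper's route is genuinely different there.
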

\noindent {We show the following existence and multiplicity theorems} for all $\lambda\in (0,\lambda_0)$ in the subcritical case. We remark that these results are sharp in the sense that for $\lambda>\lambda_0$, it is not clear if $\mathcal{N_\lambda}$ is a manifold.
\begin{thm}\label{th1}
Let $r\in (2p, p^*$). Then, $(P_\lambda)$ has at least two  solutions for $\lambda \in(0, \lambda_0)$.
\end{thm}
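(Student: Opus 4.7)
The plan is to obtain the two solutions as minimizers of $\mathcal{J}_\lambda$ on the two non-degenerate pieces of the Nehari manifold. With $M(t)=a+bt$, the fiber map $\phi_u(t):=\mathcal{J}_\lambda(tu)$ for $t>0$ expands as
\begin{equation*}
\phi_u(t)=\frac{at^p}{p}\|u\|_{X_0}^p+\frac{bt^{2p}}{2p}\|u\|_{X_0}^{2p}-\frac{\lambda t^q}{q}\int_\Omega f|u|^q\,dx-\frac{t^r}{r}\int_\Omega g|u|^r\,dx,
\end{equation*}
and $tu\in\mathcal{N}_\lambda$ iff $\phi_u'(t)=0$. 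Splitting by the sign of $\phi_u''$ gives $\mathcal{N}_\lambda=\mathcal{N}_\lambda^+\cup\mathcal{N}_\lambda^0\cup\mathcal{N}_\lambda^-$, and by Theorem \ref{th1.1} we have $\mathcal{N}_\lambda^0=\emptyset$ for $\lambda\in(0,\lambda_0)$, so the two pieces are $C^1$ submanifolds. A direct analysis of $\phi_u$ using $q<p<2p<r$ shows that whenever $\int_\Omega f|u|^q>0$ and $\int_\Omega g|u|^r>0$ the map $\phi_u$ has exactly two positive critical points $0<t^+(u)<t^-(u)$, a local minimum giving an element of $\mathcal{N}_\lambda^+$ with $\phi_u(t^+)<0$, and a local maximum giving an element of $\mathcal{N}_\lambda^-$; in particular both pieces are non-empty.

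Second, using the Nehari identity to eliminate $\int_\Omega g|u|^r$, for $u\in\mathcal{N}_\lambda$ I would rewrite
\begin{equation*}
\mathcal{J}_\lambda(u)=a\Big(\frac{1}{p}-\frac{1}{r}\Big)\|u\|_{X_0}^p+b\Big(\frac{1}{2p}-\frac{1}{r}\Big)\|u\|_{X_0}^{2p}-\lambda\Big(\frac{1}{q}-\frac{1}{r}\Big)\int_\Omega f|u|^q\,dx.
\end{equation*}
Since $r>2p$, both Kirchhoff coefficients above are strictly positive; H\"older (with $f\in L^{r/(r-q)}(\Omega)$) and the continuous embedding $X_0\hookrightarrow L^r(\Omega)$ bound the concave term by $C\lambda\|u\|_{X_0}^q$ with $q<p<2p$, yielding coercivity and a finite lower bound for $\mathcal{J}_\lambda$ on all of $\mathcal{N}_\lambda$. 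Setting $\alpha^\pm:=\inf_{\mathcal{N}_\lambda^\pm}\mathcal{J}_\lambda$, the fiber analysis above gives $\alpha^+<0$, while the fact (again via the $\phi_u''<0$ condition) that elements of $\mathcal{N}_\lambda^-$ are uniformly bounded away from $0$ in $X_0$ guarantees that $\alpha^-$ is finite.

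Third, I would apply Ekeland's variational principle on $\overline{\mathcal{N}_\lambda^\pm}$ to produce Palais-Smale sequences $\{u_n^\pm\}\subset\mathcal{N}_\lambda^\pm$ with $\mathcal{J}_\lambda(u_n^\pm)\to\alpha^\pm$ and $\mathcal{J}_\lambda'(u_n^\pm)\to 0$ in $X_0^*$; the absence of $\mathcal{N}_\lambda^0$ keeps the Lagrange multiplier bounded and forces it to zero, turning the constrained PS sequence into a genuine one. By coercivity there are weak limits $u_n^\pm\rightharpoonup u^\pm$ along a subsequence. Since $r<p_s^*$ we are in the subcritical regime, so the embedding $X_0\hookrightarrow L^r(\Omega)$ (and hence $L^q$) is compact, and the sign-changing nonlinear terms pass to the limit. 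Strong convergence in $X_0$ then follows by testing $\mathcal{J}_\lambda'(u_n)-\mathcal{J}_\lambda'(u)$ against $u_n-u$ and using the standard Simon-type monotonicity inequality for the fractional $p$-Laplacian together with the uniform positivity and continuity of $M$ on bounded sets. Since $\||u|\|_{X_0}\le\|u\|_{X_0}$ and $|u|$ keeps the $L^q$ and $L^r$ integrals unchanged, after rescaling $|u^\pm|$ back onto the appropriate piece of $\mathcal{N}_\lambda$ we obtain non-negative nontrivial critical points; because $\mathcal{N}_\lambda^+\cap\mathcal{N}_\lambda^-=\emptyset$, the two solutions are distinct.

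The main obstacle is the compactness of the Palais-Smale sequences in the Kirchhoff setting: the prefactor $M(\|u_n\|_{X_0}^p)$ of the principal term is itself a nonlocal quantity depending on the full norm of $u_n$, so weak convergence of $u_n$ does not by itself identify the weak limit as a critical point of $\mathcal{J}_\lambda$. One must upgrade to norm convergence $\|u_n\|_{X_0}\to\|u\|_{X_0}$, so that $M(\|u_n\|_{X_0}^p)\to M(\|u\|_{X_0}^p)$; this is where the subcritical hypothesis $r<p_s^*$ is essential, since it is exactly at the critical exponent that the compact embedding $X_0\hookrightarrow L^r(\Omega)$ fails and a concentration-compactness argument of the type used in \cite{pal} becomes necessary.
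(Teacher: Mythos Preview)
Your proposal is correct and follows essentially the same strategy as the paper: decompose $\mathcal{N}_\lambda$ via the fiber maps, use Theorem~\ref{th1.1} to discard $\mathcal{N}_\lambda^0$, establish coercivity on $\mathcal{N}_\lambda$ by eliminating the $g$-integral, produce Palais--Smale sequences on each piece via Ekeland, and recover strong convergence in $X_0$ from the subcritical compact embedding together with the monotonicity inequality for the fractional $p$-Laplacian (this is exactly the content of the paper's Lemmas~\ref{le44}, \ref{pscn}, \ref{L37}, \ref{L35} and Proposition~\ref{prp1}). Your treatment of non-negativity, noting only $\||u|\|_{X_0}\le\|u\|_{X_0}$ and then rescaling $|u^\pm|$ back onto the Nehari manifold, is in fact more careful than the paper's assertion that $\mathcal{J}_\lambda(u_\lambda^\pm)=\mathcal{J}_\lambda(|u_\lambda^\pm|)$, which need not hold with equality for the nonlocal seminorm.
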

\noindent For the second theorem, we consider the following minimization problem
\begin{equation}\label{minmm}
\Lambda=\inf\left\{ \|u\|_{X_0}^{2p}\;:\;u\in X_0, \int_\Omega g(x)|u|^{2p} dx=1\right\}.
\end{equation}
Using direct methods of calculus of variations, we can show that $\Lambda >0$ and is achieved by $u_\Lambda\geq0$ in $\Omega$.
\begin{thm}\label{th22} Let $r=2p$ and let $\Lambda$ be as in \eqref{minmm}. Then
\begin{enumerate}[(i)]
\item $(P_\lambda)$ has at least one solution for all $\lambda>0.$
\item For $b<\frac{1}{\Lambda}$,\; there exists a $\lambda ^0>0$ such that $(P_\lambda)$ has at least two solutions for $\lambda \in(0, \lambda^0)$.
\end{enumerate}
\end{thm}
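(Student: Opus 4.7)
The plan is to perform a careful fibering analysis, noting that when $r=2p$ the Kirchhoff term and the super-linear term enter the map $\varphi_u(t):=\mathcal{J}_\lambda(tu)$ at the same order in $t$:
\[
\varphi_u(t)=\frac{at^p}{p}\|u\|_{X_0}^p+\frac{t^{2p}}{2p}D(u)-\frac{\lambda t^q}{q}\int_\Omega f|u|^q,\qquad D(u):=b\|u\|_{X_0}^{2p}-\int_\Omega g|u|^{2p}.
\]
Critical points of $\varphi_u$ solve $\psi_u(t)=\lambda\int_\Omega f|u|^q$ with $\psi_u(t)=at^{p-q}\|u\|_{X_0}^p+t^{2p-q}D(u)$. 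Two regimes occur: \textbf{(a)} if $D(u)\ge 0$ then $\psi_u$ is strictly increasing from $0$ to $+\infty$, giving a unique positive critical point $t_+(u)$, so $t_+(u)u\in\mathcal{N}_\lambda^+$ whenever $\int f|u|^q>0$; \textbf{(b)} if $D(u)<0$ then $\psi_u$ is unimodal with finite maximum, producing two positive critical points $t_+(u)<t_-(u)$ (when $\lambda\int f|u|^q$ lies below that maximum) that lie in $\mathcal{N}_\lambda^+$ and $\mathcal{N}_\lambda^-$ respectively. The definition of $\Lambda$ forces $D(u)\ge(b-1/\Lambda)\|u\|_{X_0}^{2p}$ whenever $\int g|u|^{2p}>0$, so case (b) can occur only when $b<1/\Lambda$.

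For part (i), condition (f2) together with the scaling $\|u(\cdot/\varepsilon)\|_{X_0}^p=\varepsilon^{n-ps}\|u\|_{X_0}^p$ and the hypothesis $n<2ps$ let me construct, for any $\lambda>0$, a function $u$ with $\int f|u|^q>0$ and $D(u)>0$: compressing a test function supported near a point of $\{f>0\}$ makes the fractional seminorm blow up faster than the weighted $L^{2p}$ mass. Case (a) then yields $\mathcal{N}_\lambda^+\neq\emptyset$ for every $\lambda>0$. Eliminating $D(u)$ on $\mathcal{N}_\lambda$ via $\langle\mathcal{J}_\lambda'(u),u\rangle=0$ produces the identity
\[
\mathcal{J}_\lambda(u)=\frac{a}{2p}\|u\|_{X_0}^p+\lambda\Bigl(\frac{1}{2p}-\frac{1}{q}\Bigr)\int_\Omega f|u|^q,
\]
which, combined with $\int f|u|^q\le C\|u\|_{X_0}^q$ from (f1), Holder and Sobolev embedding, shows $\mathcal{J}_\lambda|_{\mathcal{N}_\lambda^+}$ is coercive and bounded below. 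A minimizing sequence $\{u_n\}\subset\mathcal{N}_\lambda^+$ is bounded; extracting $u_n\rightharpoonup u_1$ in $X_0$ and $u_n\to u_1$ in $L^{2p}(\Omega)$ (compact since $2p<p_s^*$), a Brezis--Lieb decomposition combined with the fibering equation forces strong convergence in $X_0$, so $u_1\in\mathcal{N}_\lambda^+\setminus\{0\}$ attains the infimum. The standard Lagrange multiplier argument, using $\varphi_{u_1}''(1)>0$ to rule out $\mathcal{N}_\lambda^0$, shows $u_1$ is a weak solution, and replacing by $|u_1|$ gives the non-negative solution.

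For part (ii), the assumption $b<1/\Lambda$ together with the minimizer $u_\Lambda\ge 0$ of \eqref{minmm} give $D(u_\Lambda)=b\Lambda-1<0$, and by continuity $D<0$ on an open cone in $X_0$ along which case (b) applies. Comparing $\max_t\psi_{u_\Lambda}(t)$ with $\int f|u_\Lambda|^q$ produces an explicit threshold $\lambda^0>0$ such that for every $\lambda\in(0,\lambda^0)$ the set $\mathcal{N}_\lambda^-$ is nonempty. The coercivity identity bounds $\mathcal{J}_\lambda|_{\mathcal{N}_\lambda^-}$ from below; minimizing and applying the same $L^{2p}$ compactness plus Brezis--Lieb argument yields $u_2\in\mathcal{N}_\lambda^-$, and the strict ordering $t_-(u)>t_+(u)$ on fibres forces $\mathcal{J}_\lambda(u_2)>\mathcal{J}_\lambda(u_1)$. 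Hence $u_1\neq u_2$ and both are non-negative after replacement by $|u_i|$.

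The main obstacle will be the second minimization in part (ii): preventing the weak limit of a minimizing sequence on $\mathcal{N}_\lambda^-$ from drifting onto the transition set $\mathcal{N}_\lambda^0$, where $\varphi_u''(1)=0$ and the Lagrange multiplier argument degenerates. This forces a quantitative choice of $\lambda^0$, expressible in terms of $a,b,\Lambda,\|f\|_{L^{r/(r-q)}}$ and $\|g\|_\infty$, that keeps $\mathcal{N}_\lambda^-$ uniformly separated from $\mathcal{N}_\lambda^0$ along the minimizing sequence. Subcriticality $2p<p_s^*$ (from $n<2ps$) makes the $L^{2p}$ compactness step routine; upgrading $L^{2p}$-convergence to strong $X_0$-convergence relies on the nonlocal Brezis--Lieb identity from \cite{pal} together with the monotonicity of $M(t)=a+bt$.
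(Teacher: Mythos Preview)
Your fibering analysis---splitting according to the sign of $D(u)=b\|u\|_{X_0}^{2p}-\int_\Omega g|u|^{2p}$ and reading off the structure of $\mathcal N_\lambda^\pm$---is exactly what the paper does in Lemmas~\ref{repd}--\ref{repem}. The coercivity identity you derive on $\mathcal N_\lambda$ and the negativity $\theta_\lambda^+<0$ also match the paper. Where you diverge is in the compactness mechanism: the paper does \emph{not} attempt direct minimization plus Brezis--Lieb. Instead it applies Ekeland's variational principle on $\mathcal N_\lambda$ (Proposition~\ref{prp1}, following the template of Theorem~\ref{th1}) to upgrade a minimizing sequence to a genuine Palais--Smale sequence, and then invokes Lemma~\ref{pscn}, which gives strong $X_0$-convergence of any bounded PS sequence via the pointwise monotonicity inequality $|a-b|^p\le 2^{p-2}(|a|^{p-2}a-|b|^{p-2}b)(a-b)$ together with $M(s)\ge a$. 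The Ekeland step needs the denominator in \eqref{3be} to stay away from zero along the sequence, which is exactly why the paper first proves $\mathcal N_\lambda^0=\emptyset$ (Lemma~\ref{repd}: automatically for $b\ge 1/\Lambda$, and for $\lambda<\lambda^0$ when $b<1/\Lambda$).

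Your direct-minimization route can be made to work, but as written the step ``Brezis--Lieb plus the fibering equation forces strong convergence'' is incomplete: the Nehari constraint tells you $a\|u_n\|^p+b\|u_n\|^{2p}$ converges, but not that the limit equals $a\|u_1\|^p+b\|u_1\|^{2p}$, since you do not yet know $u_1\in\mathcal N_\lambda$. You would need a projection argument---show $t_+(u_1)u_1\in\mathcal N_\lambda^+$, then $\theta_\lambda^+\le\mathcal J_\lambda(t_+(u_1)u_1)\le\mathcal J_\lambda(u_1)\le\liminf\mathcal J_\lambda(u_n)=\theta_\lambda^+$, forcing $t_+(u_1)=1$---and this again requires knowing $\mathcal N_\lambda^0=\emptyset$ so that the fibering projection is well-defined and the inequalities chain correctly. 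So the ``main obstacle'' you flag for part~(ii) is equally present in part~(i), and your scaling construction (which is a nice touch the paper does not use) only shows $\mathcal N_\lambda^+\neq\emptyset$; it does not prevent the minimizer from sitting on $\mathcal N_\lambda^0$. The paper's Ekeland/PS route is more economical here because once Lemma~\ref{repd} is in hand, Proposition~\ref{prp1} and Lemma~\ref{pscn} handle both $\mathcal N_\lambda^+$ and $\mathcal N_\lambda^-$ uniformly with no further projection bookkeeping.
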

\noindent For our next result, we use the following version of Lemma 2 of \cite{col}.
\begin{lem}\label{two}
Let u be a non-negative solution of
\begin{equation}\label{lpp}
M(\|u\|_{X_0}^p)(-\Delta)^{s}_p u
=h_\lambda(x,u)\  \text{in }\ \Omega,\;\; u=0 \;\mbox{in } \mathbb{R}^{n}\setminus \Omega,
\end{equation}
where $h_\lambda\in C(\Omega\times\mathbb{R})$ satisfies
\begin{equation}\label{grth}
h_\lambda(x,t)\leq \lambda C_0|t|^q+C_1|t|^r \;\;\textrm{for all}\; x\in \Omega ,t\in \mathbb{R}
\end{equation}
 and $C_0\geq 0,\; C_1>0,\;\lambda>0,\; 0<q<r,\; 1<r<p_s^*-1$.
Then there exists $C_*>0$, independent of $M$, such that
\begin{equation} \label{1.4new}
\|u\|_{X_0}^p<\max\{M(\|u\|_{X_0}^p)^{\frac{q-r+2}{r-1}}, M(\|u\|_{X_0}^p)^{\frac{2}{r-1}}\}(\lambda C_0C_*^{q+1}+C_1C_*^{r+1})|\Omega|.
\end{equation}
\end{lem}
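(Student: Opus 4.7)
The strategy is to combine a direct energy estimate---obtained by testing the weak formulation of \eqref{lpp} with $u$ itself---with an $L^\infty$ a priori bound that follows from viewing the equation, after dividing through by $M(\|u\|_{X_0}^p)$, as a fractional $p$-Laplace problem with subcritical right-hand side.

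\textit{Step 1 (energy identity).} Since $u\in X_0$ and $u\ge 0$, I would use $u$ as a test function in the weak formulation of \eqref{lpp} and combine with \eqref{grth} to obtain
\[
M(\|u\|_{X_0}^p)\,\|u\|_{X_0}^p \;=\; \int_\Omega h_\lambda(x,u)\,u\,dx \;\leq\; \bigl(\lambda C_0\|u\|_\infty^{q+1}+C_1\|u\|_\infty^{r+1}\bigr)|\Omega|.
\]

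\textit{Step 2 ($L^\infty$ bound).} Dividing the equation by $M(\|u\|_{X_0}^p)$ rewrites it as $(-\Delta)^s_p u = h_\lambda(x,u)/M(\|u\|_{X_0}^p)$, whose right-hand side has subcritical growth of order $r<p_s^*-1$. Running a Moser / De~Giorgi iteration on the level sets of $u$ and using the fractional Sobolev embedding $X_0\hookrightarrow L^{p_s^*}(\Omega)$---the content of the fractional version of Lemma~2 of \cite{col}---provides
\[
\|u\|_\infty \;\leq\; C_*\,M(\|u\|_{X_0}^p)^{\frac{1}{r-1}},
\]
for a constant $C_*>0$ depending on $n,p,s,\Omega,q,r,\lambda C_0,C_1$ but not on $M$. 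The exponent $1/(r-1)$ is the natural scaling exponent associated with the dominant nonlinearity $C_1|u|^r$.

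\textit{Step 3 (combine).} Plugging this bound into the inequality of Step~1, dividing by $M(\|u\|_{X_0}^p)$, and using the identities $\frac{q+1}{r-1}-1=\frac{q-r+2}{r-1}$ and $\frac{r+1}{r-1}-1=\frac{2}{r-1}$, yields
\[
\|u\|_{X_0}^p \;\leq\; \Bigl(\lambda C_0 C_*^{q+1}\,M(\|u\|_{X_0}^p)^{\frac{q-r+2}{r-1}}+C_1 C_*^{r+1}\,M(\|u\|_{X_0}^p)^{\frac{2}{r-1}}\Bigr)|\Omega|.
\]
Bounding each summand by its coefficient times $\max\{M(\|u\|_{X_0}^p)^{(q-r+2)/(r-1)},\,M(\|u\|_{X_0}^p)^{2/(r-1)}\}$ and, if necessary, enlarging $C_*$ slightly to secure the strict inequality, produces \eqref{1.4new}.

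\textbf{Main obstacle.} The delicate step is Step~2. The $L^\infty$ regularity for the fractional $p$-Laplacian with subcritical source is already technical, since a Moser iteration in the nonlocal setting requires careful control of the ``tail'' contributions through truncations of $u$ as test functions. The extra care needed here is to track how the factor $M(\|u\|_{X_0}^p)$ enters through the iteration so that it appears in the final estimate with exactly the exponent $1/(r-1)$ and the constant $C_*$ is independent of $M$.
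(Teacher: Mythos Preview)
Your three-step outline (energy identity, $L^\infty$ bound, combine) is exactly the structure of the paper's proof, and Steps~1 and~3 match it line for line. The one substantive difference is in Step~2, and it is precisely the point you flag as the ``main obstacle.'' Rather than dividing by $M(\|u\|_{X_0}^p)$ and then tracking how the factor propagates through a Moser iteration, the paper performs the rescaling $v = u / M(\|u\|_{X_0}^p)^{1/(r-1)}$ \emph{first}. A short computation shows that $v$ solves $(-\Delta)^s_p v = g_\lambda(x,v)$ with $|g_\lambda(x,s)| \le \lambda C_2|s|^q + C_3|s|^r$ for constants $C_2,C_3$ depending only on $a,p,q,r$ (the top-order term cancels the $M$-dependence exactly, and the lower-order term is controlled using $M\ge a$). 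One can then quote an off-the-shelf $L^\infty$ estimate for the fractional $p$-Laplacian (the paper cites Theorem~3.1 of \cite{ls}) to get $\|v\|_\infty \le C_*$ with $C_*$ independent of $M$, hence $\|u\|_\infty \le C_* M(\|u\|_{X_0}^p)^{1/(r-1)}$ immediately. This rescaling dissolves your obstacle entirely: no iteration needs to be re-run, and the exponent $1/(r-1)$ falls out of the scaling rather than from bookkeeping inside the iteration.
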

\begin{proof}
Let $u$ be a non-negative solution of \eqref{lpp}. Then $v=\frac{u}{M(\|u\|_{X_0}^p)^{\frac{1}{r-1}}}$ is a {non-negative} solution of
\begin{equation*}
(-\Delta)^{s}_p v
=g_\lambda(x,v)\;  \text{in } \Omega,\;v=0 \;\mbox{in } \mathbb{R}^{n}\setminus \Omega.
\end{equation*}
By \eqref{grth}, and the fact that $M(s)\ge a$ for all $s$, we see that
\begin{align*}
|g_\lambda(x,s)|&={\frac{|h_\lambda\left(x, M(\|u\|_{X_0}^p)^{\frac{1}{r-1}}s\right)|}{M(\|u\|_{X_0}^p)^{\frac{p-1}{r-1}+1}}}\\
&\leq M^{\frac{q-p+1}{r-1}-1}\lambda C_0|s|^q + M^{\frac{r-p+1}{r-1}-1}C_1|s|^r\\
&\leq {a^{\frac{r-q+p-2}{r-1}}\lambda C_0|s|^q + a^{\frac{p-2}{r-1}}C_1|s|^r}
\leq \lambda C_2|s|^q+C_3|s|^r,
\end{align*}
for some positive constants $C_2$ and $C_3.$ Now for $r<2p$, we have $1+\frac{q}{p}>\frac{r}{p}+\frac{r}{p_s^*}$. So by Theorem 3.1 of \cite{ls}, \;$\|v\|_{\infty}\leq C_* $, for some $C_*>0$ (independent of $M$) . Therefore
\begin{equation*}
\|u\|_\infty\leq M(\|u\|_{X_0}^p)^{\frac{1}{r-1}}C_*.
\end{equation*}
 Since $u$ solves  \eqref{lpp}, multiplying \eqref{lpp} by $u$  and integrating by parts, we get
\begin{align*}
\|u\|_{X_0}^p&=M(\|u\|_{X_0}^p)^{-1}\int_\Omega |h_\lambda(x,u)u |dx\\
&\leq M(\|u\|_{X_0}^p)^{-1}\left(\lambda C_0\|u\|_\infty^{q+1}+C_1\|u\|_\infty^{r+1}\right)|\Omega|\\
&\leq\max\{M(\|u\|_{X_0}^p)^{\frac{q-r+2}{r-1}}, M(\|u\|_{X_0}^p)^{\frac{2}{r-1}}\}\left(\lambda C_0C_*^{q+1}+C_1C_*^{r+1}\right)|\Omega|.
\end{align*}
\end{proof}
\noindent Now for  the case $r<2p$,  we define
$L(\lambda)=\left(\lambda C_0C_*^{q+1}+C_1C_*^{r+1}\right)|\Omega|$ and
\begin{equation}
\label{newint}
\widehat{A}=\max_{k\in I}\left\{M(k)^{\frac{q-r+2}{r-1}}, M(k)^{\frac{2}{r-1}}\right\}, \; \text{where} \;  I= \left(\frac{a(r-p)}{rb}, \frac{a(r-p)}{pb}\right).
\end{equation}
\noindent Then we have the following theorem.
\begin{thm}\label{th33} Let $r<2p$ and let $\hat{A}$ be defined in \eqref{newint}.  Then
\begin{enumerate}[(i)]
\item $(P_\lambda)$ has at least one solution for each $\lambda>0$.
\item  For any $\theta>0$, and $\displaystyle 0<b<\frac{a(r-p)}{r\widehat{A}L(\theta)}$, there exists $\hat{\lambda}_0\in(0,\theta]$ such that $(P_\lambda)$ has at least two solutions for each $0<\lambda<\hat{\lambda}_0$.
 \end{enumerate}
 \end{thm}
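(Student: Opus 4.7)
The plan is to prove part (i) by direct minimization of $\mathcal{J}_\lambda$ on $X_0$, and part (ii) by the Nehari manifold decomposition. For part (i), observe that since $r<2p$ and $2p<p_s^*$ (the latter from the hypothesis $n<2ps$), all nonlinear terms in $\mathcal{J}_\lambda(u)=\frac{a}{p}\|u\|_{X_0}^p+\frac{b}{2p}\|u\|_{X_0}^{2p}-\frac{\lambda}{q}\int_\Omega f|u|^q-\frac{1}{r}\int_\Omega g|u|^r$ are dominated at infinity by the Kirchhoff term $\|u\|_{X_0}^{2p}$ via the subcritical Sobolev embeddings. Thus $\mathcal{J}_\lambda$ is coercive and weakly lower semicontinuous on $X_0$, and attains its global infimum at some $u_1\in X_0$. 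Testing with a small non-negative bump supported where $f^+>0$ yields $\inf_{X_0}\mathcal{J}_\lambda<0$, so $u_1\ne 0$; the Euler equation then gives a weak solution of $(P_\lambda)$, and non-negativity follows by plugging $u_1^-$ into the weak formulation.

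For part (ii), I invoke the Nehari manifold decomposition: for $\lambda<\lambda_0$ (Theorem \ref{th1.1}), $\mathcal{N}_\lambda$ splits as $\mathcal{N}^+_\lambda\cup\mathcal{N}^-_\lambda$ according to the sign of $\phi_u''(1)$, where $\phi_u(t)=\mathcal{J}_\lambda(tu)$, and $u_1$ from part (i) lies in $\mathcal{N}^+_\lambda$. To produce a second solution on $\mathcal{N}^-_\lambda$, I first verify non-emptiness: pick $v\in X_0$ with $\int_\Omega g|v|^r>0$ (possible by $g^+\not\equiv 0$) and consider the auxiliary map $\psi_v(t)=at^{p-q}\|v\|_{X_0}^p+bt^{2p-q}\|v\|_{X_0}^{2p}-t^{r-q}\int_\Omega g|v|^r$, whose fixed-level set $\psi_v(t)=\lambda\int_\Omega f|v|^q$ encodes the critical points of $\phi_v$. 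The exponent ordering $q<p<r<2p$ forces $\psi_v$ (after a suitable normalization of $v$) to admit a strict local maximum at some $t_M>0$, so whenever $\lambda\int_\Omega f|v|^q<\psi_v(t_M)$ one obtains two positive roots, placing elements both on $\mathcal{N}^+_\lambda$ and $\mathcal{N}^-_\lambda$. Minimization on $\mathcal{N}^-_\lambda$ then mirrors part (i): the Nehari identity together with $r<2p$ forces any minimizing sequence to be bounded, and subcritical compactness of the $L^q,L^r$ embeddings upgrades weak to strong convergence in $X_0$ thanks to the explicit linear form $M(t)=a+bt$.

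The main obstacle is to rule out degeneration of the weak limit $u_2$ into $\mathcal{N}^0_\lambda$, and this is where the smallness of $b$ intervenes via Lemma \ref{two}. The lemma supplies the uniform a-priori bound $\|u\|_{X_0}^p<\widehat{A}L(\lambda)\le\widehat{A}L(\theta)$ for any non-negative weak solution of $(P_\lambda)$, and the hypothesis $b<a(r-p)/(r\widehat{A}L(\theta))$ is precisely the algebraic compatibility condition ensuring that this bound localizes $\|u_2\|_{X_0}^p$ within the interval $I=(a(r-p)/(rb),a(r-p)/(pb))$ from \eqref{newint}, which in turn prevents crossing onto $\mathcal{N}^0_\lambda$. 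Combining this with the fiber threshold $\lambda\int_\Omega f|v|^q<\psi_v(t_M)$ yields $\hat{\lambda}_0\in(0,\theta]$ such that for $\lambda<\hat{\lambda}_0$ both non-emptiness of $\mathcal{N}^-_\lambda$ and the admissibility from Lemma \ref{two} hold, producing the second solution $u_2$, which is distinct from $u_1$ because $\mathcal{J}_\lambda(u_1)<0\le\inf_{\mathcal{N}^-_\lambda}\mathcal{J}_\lambda$.
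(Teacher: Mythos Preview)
Your treatment of part~(i) is correct and essentially what the paper does: since $q<r<2p<p_s^*$, the functional $\mathcal{J}_\lambda$ is coercive and weakly lower semicontinuous on all of $X_0$, so a nontrivial global minimizer exists for every $\lambda>0$.

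For part~(ii), however, you have missed the \emph{truncation argument}, which is the core of the paper's proof and the only reason the interval $I$ and the constant $\widehat{A}$ in \eqref{newint} appear. The paper does not work with $\mathcal{N}_\lambda$ directly. It fixes $k\in I$, truncates $M$ to $M_k(t)=M(\min\{t,k\})$, and runs the Nehari analysis on the truncated functional $\mathcal{J}_{\lambda,k}$: because $M_k$ is eventually constant, the truncated fiber map has the standard ``$r>p$, constant Kirchhoff coefficient'' shape, and the arguments of Theorem~\ref{th1} produce two solutions $u_\lambda^\pm$ of the truncated problem $(P_{\lambda,k})$. Lemma~\ref{two} is then applied to $(P_{\lambda,k})$, not to $(P_\lambda)$: if $\|u_\lambda^\pm\|_{X_0}^p>k$ then $M_k(\|u_\lambda^\pm\|_{X_0}^p)=M(k)$, so \eqref{1.4new} gives $k<\|u_\lambda^\pm\|_{X_0}^p<\widehat{A}\,L(\theta)$, and together with $k>a(r-p)/(rb)$ this contradicts the hypothesis on $b$. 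Hence $\|u_\lambda^\pm\|_{X_0}^p\le k$, the truncation is inactive there, and $u_\lambda^\pm$ solve $(P_\lambda)$. Your version misuses this machinery in two ways. First, applying Lemma~\ref{two} to the untruncated $(P_\lambda)$ yields a bound involving $M(\|u\|_{X_0}^p)$, not $\widehat{A}$; the replacement by $\widehat{A}=\max_{k\in I}\{\dots\}$ is only legitimate once truncation has pinned $M_k(\|u\|_{X_0}^p)$ to $M(k)$ with $k\in I$. Second, the hypothesis on $b$ says $\widehat{A}L(\theta)$ lies \emph{below} the left endpoint $a(r-p)/(rb)$ of $I$, so your claim that the bound places $\|u_2\|_{X_0}^p$ ``within $I$'' is inverted. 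Finally, your fiber analysis for the original problem is incomplete: since $2p-q>r-q$, one has $\psi_v(t)\to+\infty$ as $t\to\infty$, and a local maximum of $\psi_v$ need not exist for generic $v\in G^+$; this is precisely the obstruction that truncation removes.
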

\noindent Finally, in the case of critical nonlinearity, we have the following {theorem.}
\begin{thm}\label{th2}
Let $r=p_{s}^{*}$ and $g(x)\equiv 1$. Then there exists a $\lambda_{00}>0$ such that $(P_\lambda)$ admits at least one solution for all $\lambda \in(0, \lambda_{00})$.
\end{thm}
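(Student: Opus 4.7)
The plan is to realize the solution as a constrained minimizer of $\mathcal{J}_\lambda$ on the positive component $\mathcal{N}_\lambda^+$ of the Nehari manifold decomposition already used in the subcritical theorems. Fiber-map analysis of $\phi_u(t):=\mathcal{J}_\lambda(tu)$ shows that once $\lambda$ is small enough for $\mathcal{N}_\lambda$ to be a $C^1$ manifold (Theorem~\ref{th1.1}), the component $\mathcal{N}_\lambda^+$ corresponding to local minima of $\phi_u$ is nonempty, and the hypotheses $f^+\not\equiv 0$ and $1<q<p$ force the infimum $c^+_\lambda:=\inf_{\mathcal{N}_\lambda^+}\mathcal{J}_\lambda$ to be strictly negative. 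Ekeland's variational principle on $\mathcal{N}_\lambda^+$ produces a Palais-Smale sequence $\{u_k\}\subset X_0$ at level $c^+_\lambda$, and boundedness of $\{u_k\}$ follows from combining $\mathcal{J}_\lambda(u_k)\to c^+_\lambda$ with $\langle\mathcal{J}_\lambda'(u_k),u_k\rangle=o(\|u_k\|_{X_0})$: the Kirchhoff primitive $\widehat{M}(t)=at+\tfrac{b}{2}t^2$ supplies a coercive leading term, while $q<p<r$ controls the remaining terms.

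Since $r=p_s^*$, compact embedding fails and one must rely on the concentration compactness principle for fractional Sobolev spaces proved in \cite{pal}. After extraction, $u_k\rightharpoonup u$ in $X_0$, $u_k\to u$ in $L^q(\Omega)$ and a.e.\ in $\Omega$, while the measures $|u_k(x)-u_k(y)|^p|x-y|^{-n-ps}\,dx\,dy$ and $|u_k|^{p_s^*}\,dx$ converge to measures whose singular parts concentrate on at most countably many points $\{x_j\}\subset\bar\Omega$ with masses $\mu_j,\nu_j$ satisfying $\mu_j\geq S\,\nu_j^{p/p_s^*}$, where $S$ is the sharp constant of $X_0\hookrightarrow L^{p_s^*}(\Omega)$. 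Localization around each $x_j$, together with $M(\|u_k\|_{X_0}^p)\geq a$, yields that every nonzero $\nu_j$ carries energy at least a positive constant $c^*=c^*(a,b,S,n,p,s)$, independent of $\lambda$.

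The final step is to exclude concentration at the level $c^+_\lambda$. A Brezis--Lieb-type splitting $\mathcal{J}_\lambda(u_k)=\mathcal{J}_\lambda(u)+\mathcal{J}_\lambda^\infty(u_k-u)+o(1)$, combined with the inequality $\mathcal{J}_\lambda(u)\geq c^+_\lambda$ (the weak limit $u$ is a weak solution and hence lies in $\mathcal{N}_\lambda^+\cup\mathcal{N}_\lambda^-\cup\{0\}$, and $c^+_\lambda<0$ dominates all three alternatives), forces $c^+_\lambda\geq c^+_\lambda+c^*$ whenever some $\nu_j>0$; this contradicts $c^*>0$, so every $\nu_j$ must vanish. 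Consequently $|u_k|^{p_s^*}\to|u|^{p_s^*}$ in $L^1(\Omega)$, the $(S_+)$ monotonicity of the fractional $p$-Laplacian upgrades weak convergence to strong convergence in $X_0$, and the limit $u$ is a nontrivial critical point of $\mathcal{J}_\lambda$; nonnegativity is obtained by replacing $u$ with $|u|\in\mathcal{N}_\lambda^+$, which has the same energy. The principal obstacle is the Kirchhoff coupling inside the bubble extraction: the nonlocal coefficient $M(\|u_k\|_{X_0}^p)\to m_\infty\geq a$ intertwines the $p$- and $2p$-homogeneous contributions with the critical nonlinearity, and one must verify that the localized Sobolev inequality survives with factor $m_\infty S$ so that the threshold $c^*$ remains strictly positive independently of $\lambda$.
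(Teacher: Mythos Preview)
Your overall strategy matches the paper's: minimize $\mathcal{J}_\lambda$ on $\mathcal{N}_\lambda^+$, extract a Palais--Smale sequence via Ekeland, and use the concentration compactness principle from \cite{pal} to recover strong convergence. The divergence comes in how you exclude concentration, and there your argument has a genuine gap.

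You write that ``the weak limit $u$ is a weak solution and hence lies in $\mathcal{N}_\lambda^+\cup\mathcal{N}_\lambda^-\cup\{0\}$''. For Kirchhoff problems this is not automatic: along the sequence one only knows $M(\|u_k\|_{X_0}^p)\to M(\alpha^p)$ for some $\alpha\geq\|u\|_{X_0}$, so passing to the limit in $\langle\mathcal{J}_\lambda'(u_k),\varphi\rangle\to 0$ yields
\[
M(\alpha^p)\,\langle u,\varphi\rangle_{X_0}=\lambda\int_\Omega f|u|^{q-2}u\varphi+\int_\Omega|u|^{p_s^*-2}u\varphi,
\]
which is \emph{not} $\mathcal{J}_\lambda'(u)=0$ unless $\alpha=\|u\|_{X_0}$, i.e.\ unless you have already ruled out loss of mass. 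Consequently the inequality $\mathcal{J}_\lambda(u)\geq c_\lambda^+$ is unjustified. The same nonlocal coupling also breaks the additive splitting $\mathcal{J}_\lambda(u_k)=\mathcal{J}_\lambda(u)+\mathcal{J}_\lambda^\infty(u_k-u)+o(1)$: while Brezis--Lieb gives $\|u_k\|_{X_0}^p=\|u\|_{X_0}^p+\|u_k-u\|_{X_0}^p+o(1)$, squaring produces a cross term $2\|u\|_{X_0}^p\|u_k-u\|_{X_0}^p$ in the $b\|\cdot\|_{X_0}^{2p}$ part of $\widehat{M}$, so $\mathcal{J}_\lambda$ does not decompose into a sum of two copies of itself evaluated at $u$ and at $u_k-u$.

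The paper circumvents both issues by not attempting a functional splitting. Instead it proves (Lemma~\ref{cmcr}) a quantitative compactness threshold: every Palais--Smale sequence at a level
\[
c<\Big(\tfrac{p_s^*-2p}{2pp_s^*}\Big)\frac{(m_0C)^{n/ps}}{S^{(n-ps)/ps}}-C\lambda^{p/(p-q)}
\]
admits a strongly convergent subsequence. The proof localizes with cut-offs $\phi_\delta^i$, uses only the lower bound $M\geq m_0$ to get $\nu_i\geq(m_0C)^{n/ps}/S^{(n-ps)/ps}$ for any atom, and then compares this against the level $c$ directly via $\mathcal{J}_\lambda(u_k)-\frac{1}{2p}\langle\mathcal{J}_\lambda'(u_k),u_k\rangle$; no appeal to $u$ being a solution is needed. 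Since $\theta_\lambda\leq\theta_\lambda^+<0$ while the threshold tends to a positive constant as $\lambda\to 0$, one can choose $\lambda_{00}$ so that $\theta_\lambda$ sits below the threshold, and compactness follows. To repair your argument you should either adopt this threshold approach or, if you insist on a splitting, work with the auxiliary functional having fixed coefficient $M(\alpha^p)$ and track the cross term explicitly.
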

\noindent The paper is organized as follows: In section 2, we introduce Nehari manifold for $(P_\lambda)$ and
show that the associated Euler functional is bounded below on this manifold. Section 3
contains the existence and multiplicity results in the subcritical case. In section 4, we have the existence of
nontrivial solution in the critical case.
\section{Nehari manifold for $(P_{\lambda})$}
\setcounter{equation}{0}
\noindent In this section we describe the nature of Nehari manifold corresponding to the problem $(P_\lambda)$.  In the case $r\ge 2p$, the functional $\mathcal{J}_\lambda$ is not bounded below on $X_0$ since $\widehat{M}(t)\sim t^{2p}$ as $t\rightarrow \infty$. We will show that it is bounded on some suitable subset of $X_0$ and on minimizing $\mathcal{J}_\lambda$ on these subsets, we get the solutions for problem $(P_\lambda)$.
From the definition of $\mathcal{N_\lambda}$,  $u\in \mathcal N_{\lambda}$ if and only if
\begin{equation}\label{eq2}
M(\|u\|_{X_0}^p)\int_{Q} |u(x)-u(y)|^p|x-y|^{-n-ps} dxdy - \lambda \int_{\Omega} f(x)|u|^q dx-
\int_{\Omega}g(x)|u|^{r}dx =0 .
\end{equation}
We note that $\mathcal N_{\lambda}$ contains every non zero solution of $(P_\lambda)$. Now as we know that the Nehari manifold is closely
related to the behavior of the functions $\phi_u: \mathbb R^+\rightarrow \mathbb R$
defined as $\phi_{u}(t)=\mathcal{J}_{\lambda}(tu)$. Such maps are called fiber
maps and were introduced by Drabek and Pohozaev in \cite{dpp}. For
$u\in X_0$, we have
\begin{align}\label{phde}
\phi_{u}(t) &=\frac{1}{p}\widehat{M}(t^p\|u\|_{X_0}^p)-\frac{\lambda}{q}t^q\int_\Omega f(x)|u|^qdx-\frac{1}{r}t^r\int_\Omega g(x)|u|^rdx,\\\label{phid1}
\phi_{u}^{\prime}(1) &=M(\|u\|_{X_0}^p)\|u\|_{X_0}^p-\lambda \int_\Omega f(x)|u|^qdx-\int_\Omega g(x)|u|^rdx,\\\nonumber
\phi_{u}^{\prime\prime}(1) &= (p-1)M(\|u\|_{X_0}^p)\|u\|_{X_0}^p+M'(\|u\|_{X_0}^p)p\|u\|_{X_0}^{2p}\\\label{phid2}
&\quad \quad-\lambda(q-1) \int_\Omega f(x)|u|^qdx-(r-1)\int_\Omega g(x)|u|^rdx.
\end{align}
Then it is easy to see that $u\in \mathcal N_{\lambda}$ if and only if
$\phi_{u}^{\prime}(1)=0$. Thus it is natural to split
$\mathcal {N}_{\lambda}$ into three parts corresponding to local minima,
local maxima and points of inflection. For this, we set
\begin{align*}
\mathcal N_{\lambda}^{\pm}&:= \left\{u\in \mathcal N_{\lambda}:
\phi_{u}^{\prime\prime}(1)
\gtrless0\right\} =\left\{tu\in X_0 : \phi_{u}^{\prime}(t)=0,\; \phi_{u}^{''}(t)\gtrless  0\right\},\\
\mathcal N_{\lambda}^{0}&:= \left\{u\in \mathcal N_{\lambda}:
\phi_{u}^{\prime\prime}(1) = 0\right\}=\left\{tu\in X_{0} :
\phi_{u}^{\prime}(t)=0,\; \phi_{u}^{''}(t)= 0\right\}.
\end{align*}
We define $H^\pm=\{u\in X_0\,:\, \int_\Omega f(x)|u|^q dx\gtrless0\}$,\; $G^\pm=\{u\in X_0\,:\, \int_\Omega g(x)|u|^r dx\gtrless0\}$,\; $H^0=\{u\in X_0\,:\,\int_\Omega f(x)|u|^q dx=0\}$ and $G^0=\{u\in X_0\,:\,\int_\Omega g(x)|u|^r dx=0\}$.
Define $\psi_u:\mathbb{R}^+\rightarrow \mathbb{R}$ as
\begin{equation}\label{siut}
\psi_u(t)=at^{p-q}\|u\|_{X_0}^p+bt^{2p-q}\|u\|_{X_0}^{2p}-t^{r-q}\int_\Omega g(x)|u|^r dx.
\end{equation}
Then
\begin{equation}\label{siutd}
 \psi_u^{\prime}(t)=a(p-q)t^{p-q-1}\|u\|_{X_0}^p+b(2p-q)t^{2p-q-1}\|u\|_{X_0}^{2p}-(r-q)t^{r-q-1}\int_\Omega g(x)|u|^r dx.
\end{equation}
We also note that $\phi_{tu}$ and $\psi_u$ satisfies
\begin{equation}\label{psr}
\phi_{tu}^{\prime\prime}(1)=t^{-q-1}\psi_u^{\prime}(t).
\end{equation}
\begin{lem}\label{lmcp}
If $u$ is a minimizer of $\mathcal{J}_{\lambda}$ on $\mathcal{N}_{\lambda}$ such that $u \notin \mathcal{N}_{\lambda}^{0}.$ Then $u$ is a critical point for $\mathcal{J}_{\lambda}.$
\end{lem}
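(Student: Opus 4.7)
The plan is to run a standard Lagrange multiplier argument for constrained minimization on the Nehari manifold, exploiting the fact that the hypothesis $u\notin\mathcal{N}_\lambda^0$ guarantees $u$ is a regular point of the defining constraint. Define the constraint functional $\Phi:X_0\setminus\{0\}\to\mathbb{R}$ by $\Phi(v)=\langle\mathcal{J}_\lambda'(v),v\rangle$, so that $\mathcal{N}_\lambda=\{v\in X_0\setminus\{0\}:\Phi(v)=0\}$. Since $\mathcal{J}_\lambda$ is $C^1$ on $X_0$ (by standard arguments for fractional Sobolev integrals together with $M\in C^1$), the map $\Phi$ is also $C^1$.

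First I would compute the key identity $\langle\Phi'(u),u\rangle=\phi_u''(1)$. Differentiating $\Phi(v)=\langle\mathcal{J}_\lambda'(v),v\rangle$ at $u$ in the direction $u$ yields $\langle\Phi'(u),u\rangle=\langle\mathcal{J}_\lambda''(u)u,u\rangle+\langle\mathcal{J}_\lambda'(u),u\rangle$. The first term on the right equals $\phi_u''(1)$ by the definition of the fiber map, and the second term vanishes because $u\in\mathcal{N}_\lambda$. Alternatively, one can simply check this identity directly by differentiating the explicit expressions \eqref{phid1} and \eqref{phid2}, which makes the computation entirely algebraic.

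Next, since $u$ minimizes $\mathcal{J}_\lambda$ on $\mathcal{N}_\lambda$ and $\langle\Phi'(u),u\rangle=\phi_u''(1)\neq 0$ by the hypothesis $u\notin\mathcal{N}_\lambda^0$, the point $u$ is a regular point of $\Phi$, so the Lagrange multiplier rule supplies a scalar $\mu\in\mathbb{R}$ with
\begin{equation*}
\mathcal{J}_\lambda'(u)=\mu\,\Phi'(u).
\end{equation*}
Pairing this identity with $u$ gives $0=\phi_u'(1)=\langle\mathcal{J}_\lambda'(u),u\rangle=\mu\langle\Phi'(u),u\rangle=\mu\,\phi_u''(1)$. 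Because $\phi_u''(1)\neq 0$, we conclude $\mu=0$ and hence $\mathcal{J}_\lambda'(u)=0$ in $X_0^*$, i.e., $u$ is a critical point of $\mathcal{J}_\lambda$.

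The only genuinely delicate point is the verification that $\Phi$ is $C^1$ and that the standard Lagrange multiplier theorem applies in the Banach space $X_0$; once one has the identity $\langle\Phi'(u),u\rangle=\phi_u''(1)$, the rest is automatic. Notice that no smallness assumption on $\lambda$ is needed here because the hypothesis $u\notin\mathcal{N}_\lambda^0$ itself supplies the regular-point condition that would otherwise be inherited from Theorem \ref{th1.1}.
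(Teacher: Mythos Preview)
Your argument is correct and is precisely the standard Lagrange multiplier proof that the paper defers to \cite{dpp}; the paper gives no independent argument beyond that citation, so your approach coincides with the intended one. One small remark: your first derivation of $\langle\Phi'(u),u\rangle=\phi_u''(1)$ invokes $\mathcal{J}_\lambda''$, which requires $C^2$ regularity you have not established, but your alternative route via the explicit formulas \eqref{phid1}--\eqref{phid2} avoids this and yields the identity directly (note that in general $\langle\Phi'(u),u\rangle=\phi_u''(1)+\phi_u'(1)$, and the extra term vanishes precisely because $u\in\mathcal{N}_\lambda$).
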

\begin{proof}
{The details of the proof can be found in \cite{dpp}.}
\end{proof}
\noindent Define $\theta_{\lambda} := \inf\{\mathcal{J}_{\lambda}(u)| u \in \mathcal{N}_{\lambda}\}$ and $\theta_{\lambda}^\pm := \inf\{\mathcal{J}_{\lambda}(u)| u \in \mathcal{N}_{\lambda}^\pm\}$. Then
\begin{lem}\label{le44}
$\mathcal{J}_{\lambda}$ is coercive and bounded below on  $\mathcal{N}_{\lambda}$.
\end{lem}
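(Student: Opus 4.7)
The plan is to exploit the Nehari constraint to eliminate the $\int_\Omega g|u|^r$ term from $\mathcal{J}_\lambda(u)$, and then control the remaining concave term by H\"older's inequality and the fractional Sobolev embedding.

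For $u \in \mathcal{N}_\lambda$, I use \eqref{eq2} to substitute $\int_\Omega g(x)|u|^r\,dx = a\|u\|_{X_0}^p + b\|u\|_{X_0}^{2p} - \lambda\int_\Omega f(x)|u|^q\,dx$ into the definition \eqref{enj} of $\mathcal{J}_\lambda$. Since $\widehat{M}(t) = at + \frac{b}{2}t^2$, a direct computation should yield
$$\mathcal{J}_\lambda(u) \;=\; a\Big(\frac{1}{p}-\frac{1}{r}\Big)\|u\|_{X_0}^p + b\Big(\frac{1}{2p}-\frac{1}{r}\Big)\|u\|_{X_0}^{2p} - \lambda\Big(\frac{1}{q}-\frac{1}{r}\Big)\int_\Omega f(x)|u|^q\,dx.$$
By assumption \textbf{(f1)}, H\"older's inequality, and the continuous embedding $X_0 \hookrightarrow L^r(\Omega)$ (valid since $r \leq p_s^*$), I would estimate $\bigl|\int_\Omega f(x)|u|^q\,dx\bigr| \leq C_f \|u\|_{X_0}^q$ for a constant $C_f>0$ depending on $\|f\|_{L^{r/(r-q)}}$ and the embedding constant, and similarly $\bigl|\int_\Omega g(x)|u|^r\,dx\bigr| \leq C_g \|u\|_{X_0}^r$.

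In the easier regime $r \geq 2p$ both coefficients $a(\frac{1}{p}-\frac{1}{r}) > 0$ and $b(\frac{1}{2p}-\frac{1}{r}) \geq 0$ are nonnegative, so the displayed identity gives
$$\mathcal{J}_\lambda(u) \;\geq\; a\Big(\frac{1}{p}-\frac{1}{r}\Big)\|u\|_{X_0}^p - \lambda C_f \Big(\frac{1}{q}-\frac{1}{r}\Big)\|u\|_{X_0}^q,$$
and since $q < p$, the right-hand side is coercive in $\|u\|_{X_0}$ and bounded below, yielding both conclusions at once.

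The main obstacle is the regime $r < 2p$, where $b(\frac{1}{2p}-\frac{1}{r}) < 0$ and the above lower bound on $\mathcal{J}_\lambda$ fails. To handle it, I plan to first show that $\mathcal{N}_\lambda$ is \emph{norm-bounded} in $X_0$: dropping the nonnegative term $a\|u\|_{X_0}^p$ in \eqref{eq2} and applying the H\"older/Sobolev estimates to the right-hand side, one obtains $b\|u\|_{X_0}^{2p} \leq \lambda C_f \|u\|_{X_0}^q + C_g \|u\|_{X_0}^r$; since $q < r < 2p$, dividing by $\|u\|_{X_0}^{2p}$ and letting $\|u\|_{X_0} \to \infty$ yields a contradiction, so $\|u\|_{X_0}$ is bounded on $\mathcal{N}_\lambda$. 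Coercivity then holds vacuously, and boundedness below follows by plugging this a priori bound into the closed-form expression for $\mathcal{J}_\lambda(u)$ on $\mathcal{N}_\lambda$ displayed above.
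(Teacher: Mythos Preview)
Your argument for the regime $r\ge 2p$ is exactly what the paper does: use the Nehari constraint to rewrite $\mathcal{J}_\lambda(u)$ in the form
\[
\mathcal{J}_\lambda(u)=a\Big(\tfrac1p-\tfrac1r\Big)\|u\|_{X_0}^p+b\Big(\tfrac1{2p}-\tfrac1r\Big)\|u\|_{X_0}^{2p}-\lambda\Big(\tfrac1q-\tfrac1r\Big)\int_\Omega f|u|^q,
\]
estimate the last integral by H\"older and the embedding $X_0\hookrightarrow L^r$, and conclude since $q<p$. The paper's own proof stops here, explicitly noting only the conclusion ``for $r>2p$''; Lemma~\ref{le44} is in practice invoked only in that regime (Theorem~\ref{th1}), while the cases $r=2p$ and $r<2p$ are treated separately in Section~3, the latter via a truncation of $M$.

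Your treatment of $r<2p$ therefore goes beyond the paper's proof of this lemma. The argument you give---drop $a\|u\|_{X_0}^p$ in \eqref{eq2}, bound the right side by $\lambda C_f\|u\|_{X_0}^q+C_g\|u\|_{X_0}^r$, and use $q<r<2p$ to force $\|u\|_{X_0}$ bounded on $\mathcal{N}_\lambda$---is correct, and it does yield vacuous coercivity and boundedness below. This is a more elementary route than the paper's truncation device for $r<2p$; on the other hand, the truncation is what the paper actually needs later to produce the \emph{second} solution in Theorem~\ref{th33}, so the extra machinery is not wasted there.
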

\begin{proof}
For $u \in \mathcal{N}_{\lambda},$  using H$\ddot{\textrm{o}}\textrm{lder}'$s inequality, we have
\begin{eqnarray*}
\mathcal{J}_{\lambda}(u) &=& \left(\frac{1}{p}-\frac{1}{r}\right)a\|u\|_{X_0}^{p}+ \left(\frac{1}{2p}-\frac{1}{r}\right)b\|u\|_{X_0}^{2p}-\left(\frac{1}{q}-\frac{1}{r}\right)\int_\Omega f(x)|u|^{q}dx,\\
 &\geq& \left(\frac{1}{p}-\frac{1}{r}\right)a\|u\|_{X_0}^{p}+ \left(\frac{1}{2p}-\frac{1}{r}\right)b\|u\|_{X_0}^{2p}-\lambda \left(\frac{1}{q}-\frac{1}{r}\right)l^{\frac{r-q}{r}} \left(\int_\Omega |u|^{r}dx\right)^{q/r},\\
&\geq& \left(\frac{1}{p}-\frac{1}{r}\right)a\|u\|_{X_0}^{p}+ \left(\frac{1}{2p}-\frac{1}{r}\right)b\|u\|_{X_0}^{2p}-\lambda \left(\frac{1}{q}-\frac{1}{r}\right)l^{\frac{r-q}{r}} S_r^{-q}\|u\|_{X_0}^q\\
\end{eqnarray*}
Thus $\mathcal{J}_\lambda$ is coercive and bounded below in $\mathcal{N}_\lambda$ for $r>2p$.
\end{proof}
\begin{lem}\label{2s1}
There exists $\lambda_{0} > 0$ such that $\mathcal{N}_{\lambda}^{0}(\Omega) = \emptyset, \;\forall \;\lambda \in (0, \lambda_{0})$
\end{lem}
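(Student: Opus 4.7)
The plan is to argue by contradiction. Suppose no such threshold exists; then there exist sequences $\lambda_n\to 0^+$ and $u_n\in\mathcal{N}_{\lambda_n}^0$. The two defining conditions $\phi_{u_n}'(1)=0$ and $\phi_{u_n}''(1)=0$, combined with the identity \eqref{psr} evaluated at $t=1$, give the pair of scalar equations
\begin{align*}
(A)\quad & a\|u_n\|_{X_0}^p+b\|u_n\|_{X_0}^{2p}=\lambda_n\int_\Omega f(x)|u_n|^q\,dx+\int_\Omega g(x)|u_n|^r\,dx,\\
(B)\quad & a(p-q)\|u_n\|_{X_0}^p+b(2p-q)\|u_n\|_{X_0}^{2p}=(r-q)\int_\Omega g(x)|u_n|^r\,dx.
\end{align*}
These will serve, respectively, as the source of an upper and a lower bound on $\|u_n\|_{X_0}$ which become incompatible as $\lambda_n\to 0$.

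First, I extract a uniform lower bound on the norm. Since $p>q$ and $2p>q$, the left-hand side of $(B)$ is strictly positive, so $\int_\Omega g(x)|u_n|^r\,dx>0$. By H\"older's inequality and the Sobolev embedding $X_0\hookrightarrow L^r(\Omega)$, one has $\int_\Omega g(x)|u_n|^r\,dx\le \|g\|_\infty S_r^{-r}\|u_n\|_{X_0}^r$. Dropping the $b$-term in $(B)$ and rearranging then yields
\begin{equation*}
\|u_n\|_{X_0}^{r-p}\ge \frac{a(p-q)\,S_r^{r}}{(r-q)\,\|g\|_\infty}=:C_1^{r-p}>0,
\end{equation*}
so $\|u_n\|_{X_0}\ge C_1$ independently of $\lambda_n$.

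Next, I derive an upper bound that vanishes with $\lambda$. Multiplying $(A)$ by $(r-q)$ and subtracting $(B)$ eliminates $\int_\Omega g|u_n|^r$ and produces
\begin{equation*}
a(r-p)\|u_n\|_{X_0}^p+b(r-2p)\|u_n\|_{X_0}^{2p}=(r-q)\lambda_n\int_\Omega f(x)|u_n|^q\,dx.
\end{equation*}
When $r\ge 2p$, the coefficient $b(r-2p)\ge 0$, so the left-hand side is bounded below by $a(r-p)\|u_n\|_{X_0}^p$, while H\"older's inequality together with \textbf{(f1)} bounds the right-hand side above by $(r-q)\lambda_n\|f\|_{r/(r-q)}S_r^{-q}\|u_n\|_{X_0}^q$. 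Rearranging gives
\begin{equation*}
\|u_n\|_{X_0}^{p-q}\le K\,\lambda_n,\qquad K=\frac{(r-q)\,\|f\|_{r/(r-q)}\,S_r^{-q}}{a(r-p)}.
\end{equation*}
Combining with the lower bound $\|u_n\|_{X_0}\ge C_1$ forces $\lambda_n\ge K^{-1}C_1^{p-q}$, which contradicts $\lambda_n\to 0^+$. Setting $\lambda_0:=K^{-1}C_1^{p-q}$ proves the lemma whenever $r\ge 2p$.

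The genuine difficulty lies in the regime $r<2p$, where $b(r-2p)<0$ destroys the monotonicity of the left-hand side of the eliminated equation above, so one cannot directly bound $\|u_n\|_{X_0}$ from above by a vanishing quantity. To overcome this, I would first use Lemma \ref{two} (after restricting $\lambda\le\theta$) to obtain an a priori bound $\|u_n\|_{X_0}^p\le \widehat{A}\,L(\theta)$ via \eqref{newint}, and then invoke the smallness assumption $b<\frac{a(r-p)}{r\widehat{A}L(\theta)}$ of Theorem \ref{th33}. Under this condition, the negative quadratic term $b(2p-r)\|u_n\|_{X_0}^{2p}$ cannot absorb the leading $a(r-p)\|u_n\|_{X_0}^p$ term, so the estimate $a(r-p)\|u_n\|_{X_0}^p - b(2p-r)\|u_n\|_{X_0}^{2p}\gtrsim \|u_n\|_{X_0}^p$ is recovered and the same contradiction goes through with a threshold $\hat\lambda_0\le\theta$.
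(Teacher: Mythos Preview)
For the regime $r\ge 2p$, your argument is correct and takes essentially the same route as the paper's. Both derive the lower bound on $\|u\|_{X_0}$ from the relation \eqref{2.11} (your equation $(B)$) via H\"older and Sobolev, and the competing upper bound from \eqref{2.12} (your eliminated equation) after discarding the nonnegative term $b(r-2p)\|u\|_{X_0}^{2p}$. The only cosmetic difference is packaging: you run a sequence contradiction with $\lambda_n\to 0^+$, while the paper introduces the auxiliary functional $E_\lambda(u)=\frac{(r-p)a\|u\|_{X_0}^p+(r-2p)b\|u\|_{X_0}^{2p}}{r-q}-\lambda\int_\Omega f|u|^q$ and shows directly that $E_\lambda(u)>0$ on $\mathcal{N}_\lambda^0$ for $\lambda$ below an explicit threshold. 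Your $\lambda_0=K^{-1}C_1^{p-q}$ coincides with the paper's $\lambda_1$.

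Your proposed treatment of $r<2p$, however, contains a genuine gap: Lemma \ref{two} applies only to nonnegative \emph{solutions} of the PDE \eqref{lpp}, whereas an element $u_n\in\mathcal{N}_{\lambda_n}^0$ satisfies merely $\langle\mathcal{J}_\lambda'(u_n),u_n\rangle=0$, not $\mathcal{J}_\lambda'(u_n)=0$. The $L^\infty$ estimate behind Lemma \ref{two} is therefore unavailable, and the a priori bound $\|u_n\|_{X_0}^p\le\widehat{A}\,L(\theta)$ cannot be asserted. In fact the paper does not prove Lemma \ref{2s1} in the range $r<2p$ either: its lower bound on $E_\lambda$ tacitly drops the term $(r-2p)b\|u\|_{X_0}^{2p}$, which is only legitimate when $r\ge 2p$. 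The subcase $r<2p$ is handled separately in Section~3 by truncating $M$ at a level $k$ (so that the Kirchhoff coefficient becomes the constant $M(k)$ for large norm and the offending quadratic term disappears from the counterpart of \eqref{2.12}); the emptiness statement is then established for the truncated Nehari set $\mathcal{N}_{\lambda,k}^0$ directly, without any appeal to Lemma \ref{two}.
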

\begin{proof}
We have following two cases.
\begin{enumerate}[]
  \item \textbf{Case 1:}  $u \in \mathcal{N}_{\lambda}(\Omega)$ and $ \int_{\Omega}f(x) |u|^{q}dx = 0.$\\
  From equation \eqref{phid1}, we have,\; $a \|u\|_{X_0}^{p} + b\|u\|_{X_0}^{2p}- \int_\Omega g(x)|u|^r dx = 0$. Now,
  \begin{eqnarray*}
    pa \|u\|_{X_0}^{p} + 2pb\|u\|_{X_0}^{2p}- r\int_\Omega g(x)|u|^r dx  &=& pa \|u\|_{X_0}^{p} + 2pb\|u\|_{X_0}^{2p}- r (a \|u\|^{p} + b\|u\|_{X_0}^{2p})\\
     &=&  {(p-r)a\|u\|_{X_0}^{p}+(2p-r)b\|u\|_{X_0}^{2p}<0.}
  \end{eqnarray*}
  {which implies $u \notin \mathcal{N}_{\lambda}^{0}(\Omega)$}
  \item \textbf{Case 2:} $u \in \mathcal{N}_{\lambda}(\Omega)$ and $ \int_{\Omega}f(x) |u|^{q}dx \neq 0.$
  \vspace*{.1 cm}\\
  Suppose $u \in \mathcal{N}_{\lambda}^{0}(\Omega)$ . Then from equations \eqref{phid1} and \eqref{phid2}, we have
    \begin{eqnarray}\label{2.11}
    (p-q)a \|u\|_{X_0}^{p} + (2p-q)b\|u\|_{X_0}^{2p} &=& (r-q)\int_\Omega g(x)|u|^r dx,\\\label{2.12}
    (r-p) a \|u\|_{X_0}^{p} + (r-2p) b\|u\|_{X_0}^{2p} &=& (r-q) \lambda \int_\Omega f(x)|u|^q dx.
  \end{eqnarray}
  Define $E_{\lambda}: N_{\lambda}(\Omega) \rightarrow R$ as
  \begin{equation*}
    E_{\lambda}(u) = \frac{(r-p) a \|u\|_{X_0}^{p} + (r-2p) b\|u\|_{X_0}^{2p}}{(r-q)} - \lambda\int_\Omega f(x)|u|^{q} dx,
  \end{equation*}
 { then, from equation \eqref{2.12}, $E_{\lambda}(u) = 0, \; \forall u\; \in \mathcal{N}_{\lambda}^{0}(\Omega).$  Also,}
  \begin{eqnarray*}
  E_{\lambda}(u) & \geq & \left(\frac{r-p}{r-q}\right)a\|u\|_{X_0}^p - \lambda \int_\Omega f(x)|u|^q dx\\
  & \geq & \left(\frac{r-p}{r-q}\right)a\|u\|_{X_0}^p - \lambda {\|f\|_{L^{\frac{r}{r-q}}}(\Omega)}\|u\|_{X_0}^qS_r^{-q},\\
  & \geq & \|u\|_{X_0}^q\left[\left(\frac{r-p}{r-q}\right)a\|u\|_{X_0}^{(p-q)} - \lambda {\|f\|_{L^{\frac{r}{r-q}}}(\Omega)}S_r^{-q}\right],\\
  \end{eqnarray*}
  Now, from equation \eqref{2.11}, we get
  \begin{equation}\label{2.13}
  \|u\| \geq \left[\left(\frac{p-q}{r-q}\right)\frac{aS_r^{r}}{\|g\|_\infty}\right]^{\frac{1}{r-p}}.
  \end{equation}
  Using equation (\ref{2.13}), we get
   \begin{equation*}
    E_{\lambda}(u) \geq \|u\|_{X_0}^q \left[(r-p) \left(\frac{a}{r-q}\right)^{\frac{r-q}{r-p}}\left(\frac{(p-q)S_r^r}{\|g\|_\infty}\right)^{\frac{p-q}{r-p}}- \lambda {\|f\|_{L^{\frac{r}{r-q}}}(\Omega)}S_r^{-q}\right].
  \end{equation*}
  This implies that there exists $\lambda_1>0$ such that for $\lambda\in (0, \lambda_1),\;E_{\lambda}(u)>0, \; \forall\; u \in \mathcal{N}_{\lambda}^{0}(\Omega),$
  which is contradiction. Hence, $\mathcal{N}_{\lambda}^{0}(\Omega) = \phi.$
\end{enumerate}
\end{proof}

\noindent {\bf{Proof of { Theorem \ref{th1.1}}:}} Now the proof of Theorem 1.1 follows from the implicit function theorem and Lemma \ref{2s1}{.}

\section{Existence and multiplicity results in the subcritical case}
\setcounter{equation}{0}
\noindent In this section, we prove the existence and multiplicity results for the case $r<p_s^*$.
  We need the following Lemmas.
\begin{lem}\label{pscn}
Every Palaise-Smale sequence of $\mathcal{J}_\lambda$ has a convergent subsequence. That is,
if $\{u_k\}\subset X_0$ satisfies
\begin{equation}\label{pscn1}
\mathcal{J}_\lambda(u_k)=c+o_k(1) \;\text{and}\; \mathcal{J}'_\lambda(u_k)=o_k(1)\; \textrm{in}\; X_0^{\prime},
\end{equation}
 then $\{u_k\}$ has a convergent subsequence.
\end{lem}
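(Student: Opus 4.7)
The argument has three phases: boundedness of the Palais--Smale sequence, extraction of a subsequence along which the Kirchhoff scalar $M(\|u_k\|_{X_0}^p)$ converges, and a strong convergence step driven by the monotonicity of $(-\Delta)^s_p$. The genuine difficulty, distinguishing this setting from the $M\equiv 1$ case of \cite{SS2}, is that $M(\|u_k\|_{X_0}^p)$ depends on the norm of $u_k$ itself, so one cannot pass to the limit inside $M$ before knowing strong convergence. The remedy is to first pass to a subsequence with $\|u_k\|_{X_0}\to t_0$, which renders $M(\|u_k\|_{X_0}^p)$ asymptotically constant and, because $M\geq a>0$, strictly positive.

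\emph{Boundedness and extraction of the weak limit.} For a parameter $\mu$ to be chosen, one has
\begin{align*}
\mathcal{J}_\lambda(u_k) - \tfrac{1}{\mu}\langle \mathcal{J}_\lambda^\prime(u_k),u_k\rangle
&= \bigl(\tfrac{1}{p}-\tfrac{1}{\mu}\bigr)a\|u_k\|_{X_0}^{p} + \bigl(\tfrac{1}{2p}-\tfrac{1}{\mu}\bigr)b\|u_k\|_{X_0}^{2p} \\
&\quad - \lambda\bigl(\tfrac{1}{q}-\tfrac{1}{\mu}\bigr)\int_\Omega f|u_k|^q\,dx - \bigl(\tfrac{1}{r}-\tfrac{1}{\mu}\bigr)\int_\Omega g|u_k|^r\,dx.
\end{align*}
Choosing $\mu\in[2p,r]$ (available in the regime $r\geq 2p$ of Theorems \ref{th1} and \ref{th22}) makes the coefficients on $\|u_k\|_{X_0}^p$ and $\|u_k\|_{X_0}^{2p}$ nonnegative and eliminates or reverses the sign of the $g$-term. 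Combined with $\int_\Omega f|u_k|^q\,dx\leq \|f\|_{L^{r/(r-q)}}S_r^{-q}\|u_k\|_{X_0}^q$ and $q<p$, and the Palais--Smale bounds $|\mathcal{J}_\lambda(u_k)|\leq C$, $\|\mathcal{J}_\lambda^\prime(u_k)\|_{X_0'}\|u_k\|_{X_0}=o(\|u_k\|_{X_0})$, this yields $\|u_k\|_{X_0}\leq C$; the more delicate range $p<r<2p$ is handled through the $M$-independent $L^\infty$ estimate of Lemma \ref{two}. Passing to a subsequence, $u_k\rightharpoonup u$ in $X_0$, $u_k\to u$ a.e.\ and in $L^\alpha(\Omega)$ for every $\alpha<p_s^*$, and $\|u_k\|_{X_0}\to t_0\geq 0$, so $M(\|u_k\|_{X_0}^p)\to M(t_0^p)\geq a$.

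\emph{Strong convergence.} Testing $\mathcal{J}_\lambda^\prime(u_k)\to 0$ against the bounded sequence $u_k-u$ and using H\"older with $f\in L^{r/(r-q)}$, $g\in L^\infty$ together with the strong $L^r$- and $L^q$-convergence of $u_k$, the lower-order terms $\int_\Omega f|u_k|^{q-2}u_k(u_k-u)\,dx$ and $\int_\Omega g|u_k|^{r-2}u_k(u_k-u)\,dx$ vanish in the limit. Dividing by $M(\|u_k\|_{X_0}^p)\geq a>0$ yields
\[
\int_Q |u_k(x)-u_k(y)|^{p-2}(u_k(x)-u_k(y))\bigl((u_k-u)(x)-(u_k-u)(y)\bigr)\frac{dx\,dy}{|x-y|^{n+ps}} \to 0.
\]
Weak convergence of $u_k$ in $X_0$ shows that the analogous integral with $u_k$ replaced by $u$ in the first factor also tends to $0$, so their difference does. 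Under the standing assumption $p\geq 2$, Simon's pointwise inequality $|\xi-\eta|^p\leq C(|\xi|^{p-2}\xi-|\eta|^{p-2}\eta)\cdot(\xi-\eta)$, applied to the $X_0$-quotients of $u_k$ and $u$ and integrated over $Q$, gives $\|u_k-u\|_{X_0}^p\to 0$, closing the argument.
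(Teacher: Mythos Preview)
Your approach is essentially the paper's: boundedness, weak-limit extraction, lower-order terms vanish by compact embedding, and then Simon's inequality together with $M\geq a>0$ yields strong convergence. Your handling of the Kirchhoff scalar---testing $\mathcal{J}_\lambda'(u_k)$ against $u_k-u$, dividing out $M(\|u_k\|_{X_0}^p)\geq a$, and separately noting $\langle A(u),u_k-u\rangle\to 0$ by weak convergence---is in fact a cleaner unpacking of the paper's terse statement that $\langle\mathcal{J}_\lambda'(u_k)-\mathcal{J}_\lambda'(u_\lambda),u_k-u_\lambda\rangle\to 0$ implies $\|u_k-u_\lambda\|_{X_0}\to 0$.

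One correction is needed. Your appeal to Lemma~\ref{two} for boundedness in the range $p<r<2p$ does not work: that lemma yields an a~priori bound for \emph{non-negative solutions} of the equation (it tests the PDE with $u$ and uses the $L^\infty$ regularity of \cite{ls}), and says nothing about Palais--Smale sequences. The fix is actually simpler than what you propose. When $r<2p$, the term $\tfrac{b}{2p}\|u\|_{X_0}^{2p}$ dominates $\tfrac{1}{r}\int_\Omega g|u|^r\,dx\leq C\|u\|_{X_0}^r$ at infinity, so $\mathcal{J}_\lambda$ is coercive on all of $X_0$ and the bound $\mathcal{J}_\lambda(u_k)\to c$ alone forces $\{u_k\}$ to be bounded; no combination with $\langle\mathcal{J}_\lambda'(u_k),u_k\rangle$ is needed. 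The paper simply asserts that boundedness ``is easy to verify'' and gives no case distinction.
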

\begin{proof}
Let $\{u_{k}\}$ be a sequence satisfying equation \eqref{pscn1}. Then it is easy to verify that $\{u_{k}\}$ is bounded in $X_{0}$. So upto subsequence $u_{k} \rightharpoonup u_{\lambda}$ weakly in $\mathrm{X}_{0}$, $u_{k} \rightarrow u_{\lambda}$ strongly in $\mathrm{L}^{q}(\Omega), 1 \le q < p_s^{*}$ and $u_{k}(x) \rightarrow u_{\lambda}(x)$ a.e. in $\Omega$. Now, by compactness of the imbedding $X_0\hookrightarrow L^{\alpha}(\Omega)$ for all $\alpha<p_{s}^{*}$, we have
\begin{equation*}
    \int_\Omega{f(x)|u_{k}|^{q}dx} \rightarrow \int_\Omega{f(x)|u_{\lambda}|^{q}dx}\;\;\;\mathrm{as}\;\;\; k \rightarrow \infty.
\end{equation*}
{Therefore, $\langle\mathcal{J}_{\lambda}^{'}(u_k)-\mathcal{J}_{\lambda}^{'}(u_\lambda), (u_{k}-u_{\lambda})\rangle \rightarrow 0$ as $k\rightarrow \infty$.
Now using the inequality $|a-b|^{l} \leq 2^{l-2}(|a|^{l-2}a-|b|^{l-2}b)(a-b) \;\;\;\textrm{for}\; a, b \in \mathbb{R}^{n},\; l \geq 2,$ and  $M(s) \geq a$ we get, $\|u_{k}-u_{\lambda}\|_{X_0} \rightarrow 0$ as $k \rightarrow \infty.$}
\end{proof}
\begin{lem}\label{L37}
(i) For every $u \in H^+\cap G^+ $, there is a unique $t_{\max}=t_{\max}(u)>0$
and unique $t^+(u)<t_{\max}<t^-(u)$ such that $t^+u\in \mathcal{N}_\lambda^+, t^-u\in \mathcal{N}_\lambda$ and $\mathcal{J}_\lambda(t^+u)=\displaystyle\min_{0\leq t\leq t^-} \mathcal{J}_\lambda(tu)$,
$\mathcal{J}_{\lambda}(t^-u) = \displaystyle \max_{t\geq t_{\max}} \mathcal{J}_{\lambda}(tu)$.\\
(ii) For $u \in H^+\cap G^- $, there exists a unique $t^*>0$ such that $t^*u\in \mathcal{N}_\lambda^+$ and $\mathcal{J}_\lambda(t^*u)=\displaystyle\min_{ t\geq 0} \mathcal{J}_\lambda(tu)$
\end{lem}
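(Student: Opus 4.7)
The plan is to reduce both statements to an analysis of the scalar function $\psi_u$ defined in \eqref{siut}. Direct differentiation of \eqref{phde} yields the factorization
\[\phi_u'(t) = t^{q-1}\Bigl(\psi_u(t) - \lambda\int_\Omega f(x)|u|^q\,dx\Bigr),\]
so $tu\in\mathcal{N}_\lambda$ if and only if $\psi_u(t)=\lambda\int_\Omega f(x)|u|^q\,dx$, and by \eqref{psr} the sign of $\phi''_{tu}(1)$ coincides with that of $\psi_u'(t)$. Thus the lemma is equivalent to counting and classifying the positive roots of this scalar equation for each class of $u$.

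For part (i), with $u\in H^+\cap G^+$, one has $\psi_u(0^+)=0$ and, since $r>2p>q$, $\psi_u(t)\to -\infty$ as $t\to\infty$. To obtain the unique critical point $t_{\max}$ I factor
\[\psi_u'(t)=t^{p-q-1}\Bigl[a(p-q)\|u\|_{X_0}^{p}+b(2p-q)t^{p}\|u\|_{X_0}^{2p}-(r-q)t^{r-p}\int_\Omega g(x)|u|^{r}\,dx\Bigr].\]
The bracket equals $a(p-q)\|u\|_{X_0}^{p}>0$ at $t=0$, tends to $-\infty$ (because $r-p>p$ and $\int_\Omega g|u|^r\,dx>0$), and its derivative $bp(2p-q)t^{p-1}\|u\|_{X_0}^{2p}-(r-q)(r-p)t^{r-p-1}\int_\Omega g|u|^r\,dx$ vanishes at a single explicit value of $t$. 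Hence the bracket is unimodal with a unique zero, giving the unique $t_{\max}$ and the monotonicity of $\psi_u$. Provided $\psi_u(t_{\max}) > \lambda\int_\Omega f|u|^q\,dx$, which holds uniformly in $u\in H^+\cap G^+$ for $\lambda\in(0,\lambda_0)$ with $\lambda_0$ from Lemma \ref{2s1}, the equation $\psi_u(t)=\lambda\int_\Omega f|u|^q\,dx$ has exactly two roots $t^+<t_{\max}<t^-$. The signs $\psi_u'(t^+)>0$ and $\psi_u'(t^-)<0$ place $t^+u\in\mathcal{N}_\lambda^+$ and $t^-u\in\mathcal{N}_\lambda^-$, and the monotonicity of $\phi_u$ (decreasing, increasing, decreasing on the three resulting intervals) yields the stated min/max identities.

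For part (ii), when $u\in H^+\cap G^-$ the quantity $-\int_\Omega g|u|^r\,dx$ is strictly positive, so every summand in $\psi_u'(t)$ is positive for $t>0$. Thus $\psi_u$ is strictly increasing from $0$ to $+\infty$, producing a unique $t^*>0$ with $\psi_u(t^*)=\lambda\int_\Omega f|u|^q\,dx>0$; the inequality $\psi_u'(t^*)>0$ then places $t^*u\in\mathcal{N}_\lambda^+$. Since $\phi_u'<0$ on $(0,t^*)$ and $\phi_u'>0$ on $(t^*,\infty)$, $t^*u$ globally minimizes $\phi_u$ on $[0,\infty)$.

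The main obstacle is part (i): both the uniqueness of $t_{\max}$ and the uniform gap $\psi_u(t_{\max}) > \lambda\int_\Omega f|u|^q\,dx$. The former is handled by the explicit unimodality argument above. The latter forces the smallness threshold from Lemma \ref{2s1} to reappear and is proved by using $\psi_u'(t_{\max})=0$ to express $t_{\max}^{r-p}$ in terms of $\|u\|_{X_0}^{p}$ and $\int_\Omega g|u|^r\,dx$, substituting back to lower-bound $\psi_u(t_{\max})$ by a positive power of $\|u\|_{X_0}$, and then comparing with $\lambda\int_\Omega f|u|^q\,dx$ via H\"older and the fractional Sobolev embedding $X_0\hookrightarrow L^r(\Omega)$.
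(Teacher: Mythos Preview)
Your proposal is correct and follows essentially the same route as the paper: reduce to the scalar equation $\psi_u(t)=\lambda\int_\Omega f|u|^q\,dx$ via the factorization of $\phi_u'$, establish in case (i) a unique maximum $t_{\max}$ of $\psi_u$ with $\psi_u(t_{\max})$ dominating $\lambda\int_\Omega f|u|^q\,dx$ (through the bound on $t_{\max}$ coming from $\psi_u'(t_{\max})=0$ together with H\"older and the embedding $X_0\hookrightarrow L^r$), and in case (ii) use strict monotonicity of $\psi_u$. Your unimodality argument for the uniqueness of $t_{\max}$ is in fact more explicit than the paper's, which simply asserts uniqueness from the signs of $\psi_u'$ near $0$ and at infinity; and the threshold you invoke from Lemma~\ref{2s1} coincides with the $\lambda_2$ the paper introduces here, so the cross-reference is harmless.
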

\begin{proof}
Let $u \in H^{+}\cap G^+.$ Then from equation \eqref{siutd}, we note that $\psi_{u}(t) \rightarrow - \infty$ as $t \rightarrow \infty$. From equation \eqref{siutd}, it is easy to see that
$\displaystyle\lim_{t\rightarrow 0^{+}}\psi^{'}_{u}(t)>0$ and $\displaystyle\lim_{t\rightarrow \infty}\psi^{'}_{u}(t)<0$. So there exists a unique $t_{\max} = t_{\max}(u)>0$ such that $\psi_{u}(t)$ is increasing on $(0, t_{\max})$, decreasing on $(t_{\max}, \infty)$ and $\psi^{'}_{u}(t_{\max}) = 0.$
\begin{equation*}
    \psi_{u}(t_{\max}) = t^{-q}_{\max}\left(a\;t^{p}_{\max}\|u\|_{X_0}^{p} + b\;t^{2p}_{\max}\|u\|_{X_0}^{2p}- t_{\max}^{r}\int_\Omega g(x){|u|^r}\right)
\end{equation*}
where $t_{\max}$ is the root of
\begin{equation}\label{2secnew}
    a(p-q)t_{\max}^{p}\|u\|_{X_0}^{p} + b(2p-q)t_{\max}^{2p}\|u\|_{X_0}^{2p}-(r-q)t^{r}_{\max}\int_\Omega g(x){|u|^{r}} = 0.
\end{equation}
Now from equation \eqref{2secnew}, we get
\begin{equation}\label{eneq}
t_{\max} \geq \frac{1}{\|u\|_{X_0}} \left[\frac{a(p-q)S_r^r}{(r-q)\|g\|_\infty}\right]^{\frac{1}{r-p}}:=T_0
\end{equation}
Using inequality {\eqref{eneq}}, we have
\begin{eqnarray*}
    \psi_{u}(t_{\max}) & \geq & \psi_u(T_0)\geq a T_0^{p-q}\|u\|_{X_0}^p-T_0^{r-q}\int_\Omega g(x) |u|^{r}dx \\
    & \ge  & \|u\|_{X_0}^q (r-p)\left(\frac{a}{r-q}\right)^{\frac{r-q}{r-p}}\left(\frac{(p-q)S_r^r}{\|g\|_\infty}\right)^{\frac{p-q}{r-p}}>0
\end{eqnarray*}
Hence if
$\lambda<\lambda_2=\left(\frac{(r-p)S_r^q}{{\|f\|_{L^{\frac{r}{r-q}}}(\Omega)}}\right)
\left(\frac{a}{r-q}\right)^{\frac{r-q}{r-p}}\left(\frac{(p-q)S_r^r}{\|g\|_\infty}\right)^{\frac{p-q}{r-p}}$
, then there exists unique $t^+ = t^+(u) {<} t_{\max}$ and $t^- = t^-(u) > t_{\max},$ such that $\psi_{u}(t^+) = \lambda \int_{\Omega}{f(x)|u|^{q}} = \psi_{u}(t^-)$. That is,  $t^+u, t^-u \in \mathcal{N}_{\lambda}.$ Also $\psi^{'}_{u}(t^+) > 0$ and $\psi_{u}^{'}(t^-) < 0$ implies $t^+u \in \mathcal{N}^{+}_{\lambda}$ and $t^-u \in \mathcal{N}^{-}_{\lambda}.$  Since $\phi^{'}_{u}(t) = t^{q}(\psi_{u}(t)- \lambda \int_{\Omega} f(x)|u|^{q}).$ Then $\phi^{'}_{u}(t)<0$ for all $t \in [0, t^+)$ and $\phi^{'}_{u}(t)>0$ for all $t \in (t^+, t^-)$. So $\mathcal{J}_\lambda(t^+u) = \displaystyle\min_{0 \leq t \leq t^-}\mathcal{J}_\lambda (tu).$ Also $\phi^{'}_u(t) > 0$ for all $t \in [t^+, t^-),
\phi^{'}_u(t^-) = 0$ and $\phi^{'}_u(t) < 0$ for all $t \in (t^-, \infty)$ implies that $\mathcal{J}_\lambda(t^-u)
= \displaystyle\max_{t \geq t_{\max}} \mathcal{J}_\lambda(tu).$\\
(ii) Let $u\in H^+\cap G^-$. Then from equation \eqref{siutd}, we note that $\psi_{u}(t) \rightarrow \infty$ as $t \rightarrow \infty$. Also $\psi^{\prime}_{u}(t)>0$ for all $t>0$. Hence for all $\lambda>0$ there exists $t^*>0$ such that $t^*u\in \mathcal{N}_\lambda^+$ and $\mathcal{J}_\lambda(t^*u)=\displaystyle\min_{ t\geq 0} \mathcal{J}_\lambda(tu)$.

\end{proof}
\begin{lem}\label{L35}
There exists a constant $C_{2}>0$ such that
\begin{equation*}
    \theta_{\lambda}^+ \leq - \frac{(p-q)(r-p)}{p\;q\;r}a\;C_{2}<0
\end{equation*}
\end{lem}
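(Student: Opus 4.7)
The plan is to exhibit a single element $v^\ast\in\mathcal{N}_\lambda^+$ satisfying $\mathcal{J}_\lambda(v^\ast)\le -\frac{(p-q)(r-p)}{pqr}\,a\,\|v^\ast\|_{X_0}^p$, and then take $C_2:=\|v^\ast\|_{X_0}^p$. To produce such a $v^\ast$, pick $\phi\in C_c^\infty(\Omega)$ with $\int_\Omega f|\phi|^q\,dx>0$, which is possible because $f^+\not\equiv 0$ by assumption \textbf{(f2)}; after a harmless perturbation one may also arrange $\int_\Omega g|\phi|^r\,dx\neq 0$. Depending on the sign of this latter integral, Lemma \ref{L37}(i) (when $\phi\in H^+\cap G^+$ and $\lambda<\lambda_2$) or Lemma \ref{L37}(ii) (when $\phi\in H^+\cap G^-$, any $\lambda>0$) supplies $t>0$ with $v^\ast:=t\phi\in\mathcal{N}_\lambda^+$.

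For the energy bound I work with a generic $v\in\mathcal{N}_\lambda^+$. Using \eqref{phid1} to express $\int_\Omega g|v|^r\,dx$ in terms of $\|v\|_{X_0}^p$, $\|v\|_{X_0}^{2p}$ and $\lambda\int_\Omega f|v|^q\,dx$, and substituting into \eqref{enj}, I obtain
\begin{equation*}
\mathcal{J}_\lambda(v)=a\left(\tfrac{1}{p}-\tfrac{1}{r}\right)\|v\|_{X_0}^p+b\left(\tfrac{1}{2p}-\tfrac{1}{r}\right)\|v\|_{X_0}^{2p}-\lambda\left(\tfrac{1}{q}-\tfrac{1}{r}\right)\int_\Omega f|v|^q\,dx.
\end{equation*}
Combining \eqref{phid1} with the strict inequality $\phi_v''(1)>0$ defining $\mathcal{N}_\lambda^+$ yields the lower bound
\begin{equation*}
\lambda(r-q)\int_\Omega f|v|^q\,dx>(r-p)a\|v\|_{X_0}^p+(r-2p)b\|v\|_{X_0}^{2p}.
\end{equation*}
Since $\frac{1}{q}-\frac{1}{r}=\frac{r-q}{qr}>0$, plugging this bound into the expression for $\mathcal{J}_\lambda(v)$ and collecting terms produces
\begin{equation*}
\mathcal{J}_\lambda(v)<-\tfrac{(p-q)(r-p)}{pqr}\,a\,\|v\|_{X_0}^p-\tfrac{(r-2p)(2p-q)}{2pqr}\,b\,\|v\|_{X_0}^{2p}.
\end{equation*}
In the range $r\ge 2p$ both coefficients are non-positive, so the second term can be dropped.

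Applied to $v^\ast$, this gives $\mathcal{J}_\lambda(v^\ast)<-\frac{(p-q)(r-p)}{pqr}\,a\,\|v^\ast\|_{X_0}^p$, and choosing $C_2:=\|v^\ast\|_{X_0}^p>0$ closes the argument. The main delicate point is the sign book-keeping in the algebraic identities, in particular the need for $\int f|v^\ast|^q>0$ so that the negative contribution to $\mathcal{J}_\lambda$ genuinely appears (this is automatic on $\mathcal{N}_\lambda^+$ when $r\ge 2p$, since the lower bound above then forces $\int f|v|^q>0$), together with checking that the chosen $\phi$ lies in the scope of Lemma \ref{L37} for the relevant $\lambda$-range. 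In the residual regime $r<2p$, the surviving positive $b\|v\|_{X_0}^{2p}$ term must be absorbed using an a priori lower bound on $\|v\|_{X_0}$ for $v\in\mathcal{N}_\lambda^+$ in the spirit of \eqref{2.13}.
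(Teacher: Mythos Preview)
Your argument is correct and follows essentially the same route as the paper's: pick a test function with $\int_\Omega f|\phi|^q>0$, project it onto $\mathcal{N}_\lambda^+$ via Lemma~\ref{L37}, then combine the Nehari identity $\phi_v'(1)=0$ with the strict inequality $\phi_v''(1)>0$ to extract the negative energy bound; the paper eliminates $\lambda\int_\Omega f|v|^q$ first and bounds $\int_\Omega g|v|^r$ from above, whereas you eliminate $\int_\Omega g|v|^r$ and bound $\lambda\int_\Omega f|v|^q$ from below, but the resulting algebra is identical. One minor correction: your closing remark on the regime $r<2p$ is off---absorbing the surviving positive $b\|v\|_{X_0}^{2p}$ term would require an \emph{upper} bound on $\|v\|_{X_0}$, not a lower bound in the spirit of \eqref{2.13}; in any event the paper treats $r<2p$ separately via truncation (Lemma~\ref{lem415}), and Lemma~\ref{L35} is invoked only for $r>2p$.
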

\begin{proof}
Let $v_{\lambda} \in \mathrm{X}_{0}$ such that $\int_\Omega{f(x)|v_{\lambda}|^{q}dx}>0$. Then by Lemma \ref{L37}, there exists $ t_{\lambda}(v_{\lambda}) > 0$ such that  $t_{\lambda}v_{\lambda} \in \mathcal{N}^{+}$.
Since $t_{\lambda}v_{\lambda} \in \mathcal{N}_{\lambda}^+$, we have
\begin{equation*}
 \mathcal{J}_{\lambda}(t_{\lambda}v_{\lambda}) = \left(\frac{1}{p}-\frac{1}{q}\right)a \|t_{\lambda} v_{\lambda}\|_{X_0}^{p} + \left(\frac{1}{2p}-\frac{1}{q}\right)b \|t_{\lambda} v_{\lambda}\|_{X_0}^{2p} + \left(\frac{1}{q}-\frac{1}{r}\right) \int_\Omega g(x){|t_{\lambda}v_{\lambda}|^{r}dx}.
\end{equation*}
and
\begin{equation*}
  \int_\Omega g(x){|t_{\lambda}v_{\lambda}|^{r}dx} \leq \left(\frac{p-q}{r-q}\right)a\|t_\lambda v_\lambda\|_{X_0}^{p}+ \left(\frac{2p-q}{r-q}\right)b\|t_\lambda v_\lambda\|_{X_0}^{2p}.
\end{equation*}
Therefore
\[
\mathcal{J}_{\lambda}(t_{\lambda}v_{\lambda}) \leq -\frac{(p-q)(r-p)}{p\;q\;r} a\|t_{\lambda}v_{\lambda}\|_{X_0}^{p}
\leq  -\frac{(p-q)(r-p)}{p\;q\;r} a\; \mathrm{C_{2}},
\]
where $C_2=\|t_\lambda v_\lambda \|_{X_0}.$ This implies $ \theta_{\lambda}^+ \leq -\frac{(p-q)(r-p)}{p\;q\;r} a\; \mathrm{C_{2}}$.
\end{proof}
\begin{lem}\label{tt}
For a given ${u \in \mathcal{N}_{\lambda}}$ and $\lambda \in (0, \lambda_{0}),$ there exists $\epsilon > 0$ and a differentiable function
$\xi : \mathcal{B}(0,\epsilon) \subseteq X_{0} \rightarrow \mathbb{R}$ such that $\xi(0)=1,$ the function $\xi(v)(u-v)\in \mathcal{N}_{\lambda}$
and
\begin{equation}\label{3be}
\langle\xi^{'}(0), v\rangle = \frac{pa \langle u, v\rangle+ 2pb \|u\|_{X_0}^{p}\langle u, v\rangle - q\lambda \int_\Omega f(x)|u|^{q-2}u\;v\;dx- r\int g(x)|u|^{r-2}u\;v dx}{(p-q)a\|u\|_{X_0}^{p} + (2p-q)b\|u\|_{X_0}^{2p}-(r-q)\int_\Omega g(x)|u|^rdx}
\end{equation}
where $\langle u, v\rangle=\int_{\mathbb{R}^{2n}} \left|u(x)-u(y)\right|^{p-2}(u(x)-u(y))(v(x)-v(y))|x-y|^{-n-ps} dx\,dy\notag$
for all $v\in X_0$.
\end{lem}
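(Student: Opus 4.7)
The plan is to invoke the implicit function theorem in the classical Nehari-manifold way. Define $F:\mathbb{R}\times X_0\to\mathbb{R}$ by
\[
F(t,v)=at^{p}\|u-v\|_{X_0}^{p}+bt^{2p}\|u-v\|_{X_0}^{2p}-\lambda t^{q}\!\int_\Omega f(x)|u-v|^{q}dx-t^{r}\!\int_\Omega g(x)|u-v|^{r}dx,
\]
so that $F(t,v)=\langle\mathcal{J}_\lambda'(t(u-v)),\,t(u-v)\rangle$. Thus $F(1,0)=\langle\mathcal{J}_\lambda'(u),u\rangle=0$ because $u\in\mathcal{N}_\lambda$, and $F(\xi,v)=0$ with $\xi>0$ is exactly the condition $\xi(u-v)\in\mathcal{N}_\lambda$.

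Next I would check the hypothesis of the implicit function theorem at $(t,v)=(1,0)$. A direct differentiation gives
\[
\partial_t F(1,0)=ap\|u\|_{X_0}^{p}+2bp\|u\|_{X_0}^{2p}-q\lambda\!\int_\Omega f(x)|u|^{q}dx-r\!\int_\Omega g(x)|u|^{r}dx,
\]
and using the relation $a\|u\|_{X_0}^{p}+b\|u\|_{X_0}^{2p}=\lambda\int f|u|^{q}+\int g|u|^{r}$ (which holds on $\mathcal{N}_\lambda$) this simplifies to
\[
\partial_t F(1,0)=(p-q)a\|u\|_{X_0}^{p}+(2p-q)b\|u\|_{X_0}^{2p}-(r-q)\!\int_\Omega g(x)|u|^{r}dx=\psi_u'(1).
\]
This is precisely the denominator in \eqref{3be} and, by \eqref{psr}, equals $\phi_u''(1)$ up to a positive factor. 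Since $\lambda\in(0,\lambda_0)$, Lemma \ref{2s1} ensures $\mathcal{N}_\lambda^0=\emptyset$, so $\phi_u''(1)\neq 0$ and hence $\partial_t F(1,0)\neq 0$. I expect this nonvanishing to be the only delicate point; once it is in hand, the rest is mechanical.

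The functional $F$ is clearly $C^{1}$ on $\mathbb{R}\times X_0$ (by the smoothness of the $X_0$-norm to the $p$-th power and of the $L^q$, $L^r$ integrals in $v$, which follows from standard Nemytskii arguments together with $f\in L^{r/(r-q)}$, $g\in L^\infty$). The implicit function theorem therefore yields $\varepsilon>0$ and a $C^{1}$ function $\xi:B(0,\varepsilon)\subset X_0\to\mathbb{R}$ with $\xi(0)=1$ and $F(\xi(v),v)\equiv 0$; shrinking $\varepsilon$ if necessary, $\xi(v)>0$, hence $\xi(v)(u-v)\in\mathcal{N}_\lambda$.

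Finally, I would read off the derivative formula. Computing the partial derivatives of $F$ with respect to $v$ at $(1,0)$ along $w\in X_0$, using $\frac{d}{d\varepsilon}\|u-\varepsilon w\|_{X_0}^{p}|_{\varepsilon=0}=-p\langle u,w\rangle$ and the analogous identities for the $L^q$ and $L^r$ terms, gives
\[
\langle\partial_v F(1,0),w\rangle=-ap\langle u,w\rangle-2bp\|u\|_{X_0}^{p}\langle u,w\rangle+q\lambda\!\int_\Omega f|u|^{q-2}uw\,dx+r\!\int_\Omega g|u|^{r-2}uw\,dx.
\]
The implicit function formula $\langle\xi'(0),w\rangle=-\partial_v F(1,0)/\partial_t F(1,0)$ then reproduces \eqref{3be} exactly, completing the proof.
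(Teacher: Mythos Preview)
Your proof is correct and follows exactly the same route as the paper: define the same auxiliary map $\mathcal F_u(t,w)=F(t,v)$, observe $\mathcal F_u(1,0)=0$, use $\mathcal N_\lambda^0=\emptyset$ (Lemma~\ref{2s1}) to get $\partial_t\mathcal F_u(1,0)\neq 0$, and apply the implicit function theorem. The paper's own argument is in fact terser than yours---it simply asserts that \eqref{3be} holds---so your explicit computation of $\partial_t F(1,0)$, $\partial_v F(1,0)$ and the identification $\partial_t F(1,0)=\phi_u''(1)$ on $\mathcal N_\lambda$ is a welcome addition of detail rather than a different method.
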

\begin{proof}
For fixed ${u\in \mathcal{N}_\lambda}$, define $\mathcal{F}_u:\mathbb{R}\times X_{0} \rightarrow \mathbb{R}$ as follows
\begin{eqnarray*}
  \mathcal{F}_u(t,w) &=& t^{p}a\|u-w\|_{X_0}^{p}+t^{2p}b\|u-w\|_{X_0}^{2p}-t^{q}\lambda \int_\Omega{f(x)|u-w|^{q}dx}-t^{r}\int_\Omega g(x){|u-w|^{r}dx}
\end{eqnarray*}
then $\mathcal{F}_u(1,0) = 0,\; \frac{\partial}{ \partial t}\mathcal{F}_u(1,0)\neq 0$ as ${\mathcal{N}_{\lambda}^{0} = \phi}.$ So we can apply implicit function theorem to get a differentiable function $\xi : \mathcal{B}(0, \epsilon) \subseteq \mathrm{X}_{0} \rightarrow \mathbb{R}$ such that $\xi(0) = 1$
and equation \eqref{3be} holds and $\mathcal{F}_u(\xi(w),w) = 0$, $\textrm{for all}\; w \in \mathcal{B}(0, \epsilon)$.
This implies
\begin{equation*}
    a \|\xi(w)(u-w)\|_{X_0}^{p} + b \|\xi(w)(u-w)\|_{X_0}^{2p} - \lambda \int_\Omega{f(x)|\xi(w)(u-w)|^{q}dx} - \int_\Omega g(x){|\xi(w)(u-w)|^rdx} = 0.
\end{equation*}
Hence $ \xi(w)(u-w) \in \mathcal{N}_{\lambda}$.
\end{proof}
\begin{prop}\label{prp1}
Let $\lambda_0=\min\{\lambda_1, \lambda_2\}$, then for $\lambda \in (0,\lambda_{0})$ there exists a minimizing sequence $\{u_k\} \subset \mathcal{N}_{\lambda}$ such that
\begin{center}
    $\mathcal{J}_{\lambda}(u_{k}) = \theta_{\lambda}+o_k(1)$ and $\mathcal{J}_{\lambda}^{'}(u_{k}) = o_k(1).$
\end{center}
\end{prop}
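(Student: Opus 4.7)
The plan is to produce the minimizing sequence via Ekeland's variational principle on $\mathcal{N}_\lambda$, and then to verify the Palais--Smale condition by perturbing \emph{inside} $\mathcal{N}_\lambda$ using the implicit function supplied by Lemma~\ref{tt}.

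First, by Lemma~\ref{le44} the functional $\mathcal{J}_\lambda$ is coercive and bounded below on $\mathcal{N}_\lambda$, so $\theta_\lambda>-\infty$. Since $\mathcal{N}_\lambda$ is the zero set in $X_0\setminus\{0\}$ of a continuous functional and, for $\lambda\in(0,\lambda_0)$, its elements are bounded away from $0$ in the $X_0$-norm (cf.\ the estimate \eqref{2.13}), $\mathcal{N}_\lambda$ is a complete metric space. A standard application of Ekeland's variational principle produces a sequence $\{u_k\}\subset\mathcal{N}_\lambda$ with
\[
\mathcal{J}_\lambda(u_k)\le\theta_\lambda+\tfrac{1}{k},\qquad \mathcal{J}_\lambda(w)\ge\mathcal{J}_\lambda(u_k)-\tfrac{1}{k}\|w-u_k\|_{X_0}\quad\text{for all }w\in\mathcal{N}_\lambda.
\]

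Second, to test the derivative, I would fix $v\in X_0$ with $\|v\|_{X_0}=1$ and apply Lemma~\ref{tt} at $u_k$: for small $t>0$, $w_t:=\xi_k(tv)(u_k-tv)$ lies in $\mathcal{N}_\lambda$, where $\xi_k:B(0,\epsilon_k)\to\mathbb{R}$ is the associated $C^1$ function with $\xi_k(0)=1$. Substituting $w_t$ into the Ekeland inequality, expanding $\mathcal{J}_\lambda(w_t)-\mathcal{J}_\lambda(u_k)$ by the chain rule, invoking $\langle\mathcal{J}_\lambda'(u_k),u_k\rangle=0$ (since $u_k\in\mathcal{N}_\lambda$), dividing by $t$ and letting $t\to 0^+$ should yield
\[
\langle\mathcal{J}_\lambda'(u_k),v\rangle\le\frac{C}{k}\bigl(1+|\langle\xi_k'(0),v\rangle|\bigr).
\]
Applying the same argument to $-v$ and taking supremum over $\|v\|_{X_0}=1$ reduces the proposition to a uniform estimate $\|\xi_k'(0)\|_{X_0'}\le C$ independent of $k$.

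The main obstacle is precisely this uniform bound on $\xi_k'(0)$. The numerator in \eqref{3be}, viewed as a linear functional of $v$, is controlled in terms of $\|v\|_{X_0}$ and powers of $\|u_k\|_{X_0}$; the latter is bounded by coercivity (Lemma~\ref{le44}). The dangerous quantity is the denominator
\[
D_k=(p-q)a\|u_k\|_{X_0}^p+(2p-q)b\|u_k\|_{X_0}^{2p}-(r-q)\int_\Omega g(x)|u_k|^r\,dx,
\]
which coincides, up to the positive factor in \eqref{psr}, with $\phi_{u_k}''(1)$. I would argue by contradiction: if $|D_k|\to 0$ along a subsequence, then using the Nehari identity \eqref{phid1} to eliminate $\int_\Omega g(x)|u_k|^r\,dx$ shows that $u_k$ asymptotically satisfies both equations \eqref{2.11}--\eqref{2.12} that define $\mathcal{N}_\lambda^0$. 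The quantitative computations from the proof of Lemma~\ref{2s1}, namely the lower bound on $E_\lambda$ combined with the norm estimate \eqref{2.13}, then contradict the assumption $\lambda<\lambda_1\le\lambda_0$. Consequently $|D_k|$ remains bounded below by a positive constant uniform in $k$, which yields the required estimate on $\|\xi_k'(0)\|_{X_0'}$ and finishes the proof.
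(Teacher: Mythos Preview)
Your proposal is correct and follows essentially the same route as the paper: Ekeland's principle on $\mathcal{N}_\lambda$, perturbation via the implicit function $\xi_k$ of Lemma~\ref{tt}, reduction to a uniform bound on $\|\xi_k'(0)\|$, and a contradiction argument showing the denominator $D_k$ is bounded away from zero by replaying the estimates of Lemma~\ref{2s1}. One small quibble: your appeal to \eqref{2.13} to argue that $\mathcal{N}_\lambda$ is bounded away from $0$ is misplaced, since \eqref{2.13} is derived from \eqref{2.11}, which holds only on $\mathcal{N}_\lambda^0$; the paper instead uses $\theta_\lambda<0$ (Lemma~\ref{L35}) to keep the Ekeland sequence nontrivial, and you should do the same.
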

\begin{proof}
Using Lemma \ref{le44} and  Ekeland variational principle \cite{eke1974} , there exists a minimizing sequence\\ $\{u_k\}\subset\mathcal{N}_\lambda $ such that
\begin{equation}\label{evp1}
\mathcal{J}_\lambda(u_k)< \theta_\lambda+\frac{1}{k}
\end{equation}
\begin{equation}\label{eevvpp}
\mathcal{J}_\lambda(u_k)< \mathcal{J}_\lambda(v)+\frac{1}{k}\|v-u_k\|_{X_0}\; \textrm{for each}\; v \in \mathcal{N}_\lambda
\end{equation}
Using equation \eqref{evp1}, Lemma \eqref{L35} and H\"older's inequality,  we get, $u_k\not \equiv 0$.
Next, we claim that $\|\mathcal{J}^{'}_\lambda(u_k)\|\rightarrow 0$ as $k\rightarrow \infty$.
Now, using the Lemma \ref{tt} we get the differentiable functions $\xi_k:\mathcal{B}(0, \epsilon_k)\rightarrow \mathbb{R}$ for some $\epsilon_k>0$ such that $\xi_k(v)(u_k-v)\in \mathcal{N}_\lambda$,\; $\textrm{for all}\;\; v\in \mathcal{B}(0, \epsilon_k).$
\noindent For fixed $k$, choose $0<\rho<\epsilon_k$. Let $u\in X_0$ with $u\not\equiv 0$ and let $v_\rho=\frac{\rho u}{\|u\|_{X_0}}$. We set $\eta_\rho=\xi_k(v_\rho)(u_k-v_\rho)$. Since $\eta_\rho \in \mathcal{N}_\lambda$, we get from equation \eqref{eevvpp}
\begin{align*}
\mathcal{J}_\lambda(\eta_\rho)-\mathcal{J}_\lambda(u_k)\geq-\frac{1}{k}\|\eta_\rho-u_k\|_{X_0}
\end{align*}
{Using mean value theorem, $\displaystyle\lim_{n\rightarrow \infty}\frac{|\xi_k(v_\rho)-1|}{\rho}\leq \|\xi_k(0)\|_{X_0}$ and taking limit  $\rho\rightarrow 0$ , we get}
\begin{equation}
\langle \mathcal{J}^{\prime}_\lambda(u_k),\frac{u_k}{\|u_k\|_{X_0}}\rangle\leq\frac{C}{k}(1+\|\xi_k^{'}(0)\|_{X_0})
\end{equation}
for some constant $C>0$, independent of $\rho$. So if we can show that $\|\xi_k^{'}(0)\|_{X_0}$ is bounded then we are done. Now from Lemma \ref{tt} and H\"older's inequality, we get
\begin{equation}
\langle \xi_k^{'}(0), v\rangle=\frac{K\|v\|_{X_0}}{(p-q)a\|u_k\|_{X_0}^{p} + (2p-q)b\|u_k\|_{X_0}^{2p}-(r-q)\int_\Omega g(x)|u_k|^rdx} \;\textrm{for some}\; K>0
\end{equation}
So to prove the claim we only need to prove that
\; $(p-q)a\|u_k\|_{X_0}^{p} + (2p-q)b\|u_k\|_{X_0}^{2p}-(r-q)\int_\Omega g(x) |u_k|^rdx$ is bounded away from zero. Suppose not. Then there exists a subsequence $u_k$ such that
\begin{equation}\label{baw}
(p-q)a\|u_k\|_{X_0}^{p} + (2p-q)b\|u_k\|_{X_0}^{2p}-(r-q)\int_\Omega g(x)|u_k|^rdx=o_k(1)
\end{equation}
From equation \eqref{baw}, we get $E_\lambda(u_k)=o_k(1)$ and
\begin{equation}\label{inequ}
\|u_k\|_{X_0} \geq \left[\left(\frac{p-q}{r-q}\right)\frac{aS_r^{r}}{\|g\|_\infty}\right]^{\frac{1}{r-p}}+o_k(1).
\end{equation}
Now following the proof of Lemma \ref{2s1}, we get $E_\lambda(u_k)>0$ for large $k$, which is a contradiction. Hence $\{u_k\}$ is a Palais-Smale sequence for $\mathcal{J}_\lambda.$

\end{proof}
\noindent \textbf{Proof of Theorem \ref{th1} :} Using proposition \ref{prp1} and Lemma \ref{pscn}, there exist minimizing subsequences $\{u_k^\pm\}\in \mathcal{N}_\lambda^\pm $ and $u_\lambda^\pm\in X_0$ such that $u_k^\pm\rightarrow u_\lambda^\pm$ strongly in $X_0$ for $\lambda\in(0, \lambda_0)$. Therefore for $\lambda\in(0, \lambda_0)$,\;$u_\lambda^\pm$ are weak solutions of problem $(P_\lambda)$. Hence $u_\lambda^\pm\in \mathcal{N}_\lambda$. Moreover $u_\lambda^\pm\in \mathcal{N}_\lambda^\pm$ and $\mathcal{J}_\lambda(u_\lambda^\pm)=\theta_\lambda^\pm$. As $\mathcal{J}_\lambda(u_\lambda^\pm)=\mathcal{J}_\lambda(|u_\lambda^\pm|)$ and $|u_\lambda^\pm|\in \mathcal{N}_\lambda^\pm$, by Lemma \ref{lmcp}, $u_\lambda^\pm$ are non-negative solutions of {$(P_\lambda)$}. Also,\;$\mathcal{N}_\lambda^+\cap \mathcal{N}_\lambda^-=\emptyset$, implies that $u_\lambda^+$ and $u_\lambda^-$ are distinct solutions.\\
Now to prove Theorem \ref{th22}, we need following Lemmas.
\begin{lem} \label{repd}
Let $r=2p$ and  $\Lambda$ be defined as in \eqref{minmm}. Then
\begin{enumerate}[(i)]
\item if $b\ge\frac{1}{\Lambda}$, then $\mathcal{N}_\lambda^+=\mathcal{N}_\lambda$ \;\textrm{for all}\; $\lambda>0$.
\item if $b<\frac{1}{\Lambda}$, then there exists a $\lambda^0>0$ such that ${\mathcal{N}_\lambda^0=\phi}\;, \textrm{for all}\; \lambda\in(0, \lambda^0)$.
\end{enumerate}
\end{lem}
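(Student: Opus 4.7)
The plan is to convert the second-derivative test $\phi_u''(1)$ into a form suited to the quadratic Kirchhoff term and the choice $r=2p$. Starting from \eqref{phid2} and using the Nehari relation $\phi_u'(1)=0$ to eliminate $\lambda\int_\Omega f(x)|u|^q\,dx$, elementary algebra (collecting coefficients of $a\|u\|^p$, $b\|u\|^{2p}$ and $\int g|u|^{2p}$) produces the clean identity
\[
\phi_u''(1)=(p-q)a\|u\|_{X_0}^p+(2p-q)\left(b\|u\|_{X_0}^{2p}-\int_\Omega g(x)|u|^{2p}\,dx\right)
\]
valid for every $u\in\mathcal{N}_\lambda$. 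This identity is the workhorse for both parts.

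For part (i), I exploit the variational definition of $\Lambda$: after the usual rescaling $v=u/(\int g|u|^{2p})^{1/(2p)}$, one has $\int_\Omega g(x)|u|^{2p}\,dx\le \|u\|_{X_0}^{2p}/\Lambda$ when that integral is positive, and the bound is trivial otherwise. Thus $b\ge 1/\Lambda$ forces $b\|u\|_{X_0}^{2p}-\int g|u|^{2p}\ge 0$, so that $\phi_u''(1)\ge (p-q)a\|u\|_{X_0}^p>0$ for every $u\in\mathcal{N}_\lambda$. Since $\mathcal{N}_\lambda^+\subseteq\mathcal{N}_\lambda$ by definition, this forces $\mathcal{N}_\lambda^+=\mathcal{N}_\lambda$.

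For part (ii) I argue by contradiction: assume $u\in\mathcal{N}_\lambda^0$, so $\phi_u''(1)=0$. A short sign analysis on the workhorse identity rules out the degenerate cases: $\int g|u|^{2p}\le 0$ forces $\phi_u''(1)\ge (p-q)a\|u\|_{X_0}^p>0$, and (after invoking $\phi_u'(1)=0$ to write $\int g|u|^{2p}=a\|u\|^p+b\|u\|^{2p}-\lambda\int f|u|^q$) the case $\int f|u|^q\le 0$ forces $\phi_u''(1)\le -pa\|u\|^p<0$. Hence both integrals are strictly positive. Solving the workhorse identity for $\int g|u|^{2p}$ and substituting into $\phi_u'(1)=0$ yields the key relation
\[
\lambda\int_\Omega f(x)|u|^q\,dx=\frac{pa}{2p-q}\|u\|_{X_0}^p.
\]
Combining $\int g|u|^{2p}=b\|u\|^{2p}+\tfrac{(p-q)a}{2p-q}\|u\|^p\le\|u\|^{2p}/\Lambda$ with the assumption $b<1/\Lambda$ delivers a uniform lower bound $\|u\|_{X_0}^p\ge (p-q)a\Lambda/[(2p-q)(1-b\Lambda)]$, while Hölder's inequality and the Sobolev embedding applied to the displayed identity give an upper bound of the form $\|u\|_{X_0}^{p-q}\le C\lambda$ with $C$ depending only on $a,p,q,\|f\|_{L^{r/(r-q)}},S_r$. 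Since $p>q$, confronting these two bounds yields a strictly positive threshold $\lambda^0$ below which no such $u$ can exist, proving $\mathcal{N}_\lambda^0=\emptyset$ for $\lambda\in(0,\lambda^0)$.

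The main delicacy lies in the sign analysis in part (ii): because $f$ and $g$ are sign-changing, neither $\int f|u|^q$ nor $\int g|u|^{2p}$ has a predetermined sign on $\mathcal{N}_\lambda^0$, yet both must be shown positive before the quantitative Hölder/Sobolev estimates can be applied. The workhorse identity above is precisely what makes this elimination quick, reducing the problem to a routine computation once the sign-degenerate cases are dispatched.
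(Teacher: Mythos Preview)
Your proof is correct and follows essentially the same route as the paper: both derive the identity $\phi_u''(1)=(p-q)a\|u\|_{X_0}^p+(2p-q)\big(b\|u\|_{X_0}^{2p}-\int_\Omega g|u|^{2p}\big)$ on $\mathcal{N}_\lambda$, use the variational characterization of $\Lambda$ to control the bracket, and in part (ii) combine the resulting lower bound $\|u\|_{X_0}^p\ge \frac{(p-q)a\Lambda}{(2p-q)(1-b\Lambda)}$ with the H\"older--Sobolev upper bound coming from $\lambda\int f|u|^q=\frac{pa}{2p-q}\|u\|_{X_0}^p$. Your explicit sign analysis is slightly more detailed than the paper's (the paper simply notes that $\phi_u''(1)=0$ forces $\int g|u|^{2p}-b\|u\|^{2p}=\frac{(p-q)a}{2p-q}\|u\|^p>0$, from which the positivity of both integrals is immediate), but this is a matter of presentation rather than substance.
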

\begin{proof}
(i) From equation \eqref{phid2}, for any $u\in \mathcal{N}_\lambda,$ we have
\begin{align*}
\phi_u^{\prime\prime}(1)&=a(p-q)\|u\|_{X_0}^p+(2p-q)\left(b\|u\|_{X_0}^{2p}-\int_\Omega g(x) |u|^{2p}dx\right)\\
&>a(p-q)\|u\|_{X_0}^p+\frac{(2p-q)(b\Lambda-1)}{\Lambda}\|u\|_{X_0}^{2p}>0.
\end{align*}
So $u\in \mathcal{N}_\lambda^+$, implies $\mathcal{N}_\lambda^+=\mathcal{N}_\lambda$.\\
(ii) Suppose $u\in \mathcal{N}_\lambda^0$. Then by equation \eqref{phid2}
\begin{align}
(2p-q)\left(\int_\Omega g(x)|u|^{2p}dx-b\|u\|_{X_0}^{2p}\right)&=a(p-q)\|u\|_{X_0}^p\\\label{t2}
\|u\|_{X_0}&\geq\left(\frac{a(p-q)\Lambda}{(2p-q)(1-b\Lambda)}\right)^\frac{1}{p}.
\end{align}
Also, from equations \eqref{phid1}, \eqref{phid2}, H\"older's inequality and  Sobolev inequality
we get ,
\begin{equation}\label{t1}
\|u\|_{X_0}\leq\left(\frac{\lambda(2p-q)l^{\frac{r}{r-q}}}{a p S_r^{q}}\right)^{\frac{1}{p-q}},
\end{equation}
where $S_r$ is the Sobolev constant for embedding of $X_0$ in $L^r(\Omega)$. Now, from equations \eqref{t2} and \eqref{t1}, we get
\begin{equation}\label{cbnd}
\left(\frac{a(p-q)\Lambda}{(2p-q)(1-b\Lambda)}\right)^\frac{1}{p}\le \|u\|_{X_0}\leq\left(\frac{\lambda(2p-q)l^{\frac{r}{r-q}}}{a p S_r^{q}}\right)^{\frac{1}{p-q}}.
\end{equation}
So equation \eqref{cbnd} holds if  $\lambda\geq\lambda^0=\frac{paS_r^{q}}{(2p-q)l^{\frac{r}{r-q}}}\left(\frac{a\Lambda(p-q)}{(1-b\Lambda)(2p-q)}\right)^{\frac{p-q}{p}}$. Thus, for $\lambda<\lambda^0, \;{\mathcal{N}_\lambda^0=\phi} $
\end{proof}
\begin{lem}\label{repes}
For $r=2p$ and $b\geq\frac{1}{\Lambda}$, there exists a unique $0<t^{+}<t_{\max}$ such that $t^+u \in \mathcal{N}_\lambda$, whenever $u\in H^+$. Also
\begin{align*}
\mathcal{J}_\lambda(t^+u)=\displaystyle\inf_{t\geq0}\mathcal{J}_\lambda(tu).
\end{align*}
\end{lem}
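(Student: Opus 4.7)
The strategy is to read off the sign of the coefficients of $\psi_u(t)$ directly from the hypothesis $b\ge 1/\Lambda$, conclude that $\psi_u$ is strictly monotone, and then solve $\psi_u(t)=\lambda\int_\Omega f(x)|u|^q\,dx$ uniquely.

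First I would specialise \eqref{siut} to $r=2p$ to get
\begin{equation*}
\psi_u(t)=a\,t^{p-q}\|u\|_{X_0}^{p}+t^{2p-q}\Big(b\|u\|_{X_0}^{2p}-\int_\Omega g(x)|u|^{2p}\,dx\Big).
\end{equation*}
The key observation is that under $b\ge 1/\Lambda$ the bracketed quantity is non-negative for every $u\in X_0\setminus\{0\}$. Indeed, if $\int_\Omega g(x)|u|^{2p}\,dx\le 0$ the statement is immediate, while if $\int_\Omega g(x)|u|^{2p}\,dx>0$ then, setting $v=u/(\int_\Omega g(x)|u|^{2p}\,dx)^{1/(2p)}$, the definition \eqref{minmm} gives $\|v\|_{X_0}^{2p}\ge\Lambda$, i.e.\ $\|u\|_{X_0}^{2p}\ge\Lambda\int_\Omega g(x)|u|^{2p}\,dx$, from which
\begin{equation*}
b\|u\|_{X_0}^{2p}-\int_\Omega g(x)|u|^{2p}\,dx\ge\Big(b-\tfrac{1}{\Lambda}\Big)\|u\|_{X_0}^{2p}\ge 0.
\end{equation*}
Hence both coefficients of $\psi_u$ are non-negative and, because $a>0$ and $p>q$, the derivative
\begin{equation*}
\psi_u'(t)=a(p-q)t^{p-q-1}\|u\|_{X_0}^{p}+(2p-q)t^{2p-q-1}\Big(b\|u\|_{X_0}^{2p}-\int_\Omega g(x)|u|^{2p}\,dx\Big)
\end{equation*}
is strictly positive for all $t>0$. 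In particular $\psi_u$ is strictly increasing on $(0,\infty)$ with $\psi_u(0^+)=0$ and $\psi_u(t)\to\infty$ as $t\to\infty$; in the notation of Lemma \ref{L37} this corresponds to $t_{\max}=+\infty$, which explains the condition $t^+<t_{\max}$.

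Next, since $u\in H^+$ we have $\lambda\int_\Omega f(x)|u|^q\,dx>0$, so by the intermediate value theorem and strict monotonicity there is a unique $t^+>0$ with $\psi_u(t^+)=\lambda\int_\Omega f(x)|u|^q\,dx$. Using the identity $t^{1-q}\phi_u'(t)=\psi_u(t)-\lambda\int_\Omega f(x)|u|^q\,dx$ (which is just \eqref{phid1} after collecting powers of $t$), this gives $\phi_u'(t^+)=0$, i.e.\ $t^+u\in\mathcal{N}_\lambda$. From \eqref{psr} and $\psi_u'(t^+)>0$ we get $\phi_{t^+u}''(1)=(t^+)^{-q-1}\psi_u'(t^+)>0$, so in fact $t^+u\in\mathcal{N}_\lambda^+$.

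For the minimisation claim, I note that $\phi_u'(t)$ and $\psi_u(t)-\lambda\int_\Omega f(x)|u|^q\,dx$ have the same sign for $t>0$; strict monotonicity of $\psi_u$ then forces $\phi_u'(t)<0$ on $(0,t^+)$ and $\phi_u'(t)>0$ on $(t^+,\infty)$. Combined with $\phi_u(0)=0$, this shows $\phi_u$ attains its global minimum on $[0,\infty)$ at $t=t^+$ (with a strictly negative value), yielding $\mathcal{J}_\lambda(t^+u)=\inf_{t\ge 0}\mathcal{J}_\lambda(tu)$. The only delicate step is the non-negativity argument for $b\|u\|_{X_0}^{2p}-\int_\Omega g(x)|u|^{2p}\,dx$, but as shown it reduces to a one-line homogeneity argument from the definition of $\Lambda$.
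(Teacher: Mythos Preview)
Your proof is correct and follows essentially the same route as the paper's: specialise $\psi_u$ to $r=2p$, use $b\ge 1/\Lambda$ to make the second coefficient non-negative so that $\psi_u'>0$ on $(0,\infty)$, then solve $\psi_u(t)=\lambda\int_\Omega f|u|^q$ uniquely and read off the minimisation from the sign of $\phi_u'$. The paper is terser---it simply asserts ``Observe that $\psi_u'(t)>0$'' and invokes \eqref{psr} for the second-derivative sign---while you spell out the homogeneity argument behind $b\|u\|_{X_0}^{2p}\ge\int_\Omega g|u|^{2p}$ (the same inequality the paper uses implicitly, cf.\ Lemma~\ref{repd}(i)) and give the first-derivative sign analysis explicitly; your remark that $t_{\max}=+\infty$ in this regime is the correct reading of the statement's notation.
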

\begin{proof}
Using equation {\eqref{siut} and \eqref{siutd}} for $r=2p$, we have
\begin{eqnarray}\label{siutrepp}
\psi_u^(t)&=& {at^{p-q}\|u\|_{X_0}^p+t^{2p-q}\left(b\|u\|_{X_0}^{2p}-\int_\Omega g(x)|u|^{2p}dx \right)},\\\label{siutrep}
 \psi_u^{\prime}(t)&=&a(p-q)t^{p-q-1}\|u\|_{X_0}^p+(2p-q)t^{2p-q-1}\left(b\|u\|_{X_0}^{2p}-\int_\Omega g(x)|u|^{2p}dx \right).
\end{eqnarray}
Observe that $\psi_u^{\prime}(t)>0$ for $t>0$. So for $u\in H^+$ there exists a unique $t^+(u)>0$ such that $t^+u\in \mathcal{N}_\lambda=\mathcal{N}_\lambda^+.$ Now using equation \eqref{psr}, we get
$\frac{d}{dt}\mathcal{J}_\lambda(t^+u)=0,\; \frac{d^2}{dt^2}\mathcal{J}_\lambda(t^+u)>0$ for all $t>0$. Hence $\mathcal{J}_\lambda(t^+u)=\displaystyle\inf_{t\geq0}\mathcal{J}_\lambda(tu)$.
\end{proof}
\begin{lem}\label{repex}
For $r=2p$ and $b\leq\frac{1}{\Lambda}$, there exists a unique $t_{\max}(u)>0$ such that
\begin{enumerate}[(i)]
\item For $u\in H^+$, there exists $\lambda^0>0$ and unique $t^+(u)<t_{\max}(u)<t^-(u)$ such that $t^{\pm} u\in \mathcal{N}_\lambda^{\pm}$ and $\mathcal{J}_\lambda(t^+u)=\displaystyle\inf_{0\leq t\leq t_{\max}}\mathcal{J}_\lambda(tu), \mathcal{J}_\lambda(t^-u)=\displaystyle\sup_{t\geq t_{\max}}\mathcal{J}_\lambda(tu)$ for all $\lambda\in(0, \lambda^0)$.
\item For $u\in H^-$, there exists unique $t^-(u)>t_{\max}$ such that $t^-u\in \mathcal{N}_\lambda^-$ and $\mathcal{J}_\lambda(t^-u)=\displaystyle\sup_{t\geq 0}\mathcal{J}_\lambda(tu)$
     \end{enumerate}
\end{lem}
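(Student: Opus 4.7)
The plan is to carry out the same fiber-map analysis as in Lemma~\ref{L37}, now specialized to $r=2p$. Set $A(u):=\int_\Omega g(x)|u|^{2p}\,dx-b\|u\|_{X_0}^{2p}$, so that \eqref{siut}--\eqref{siutd} simplify to
\[
\psi_u(t)=a t^{p-q}\|u\|_{X_0}^{p}-t^{2p-q}A(u),\qquad \psi_u'(t)=a(p-q)t^{p-q-1}\|u\|_{X_0}^{p}-(2p-q)t^{2p-q-1}A(u).
\]
A nontrivial $t_{\max}$ exists only when $A(u)>0$; if $A(u)\le 0$ then $\psi_u'>0$ on $(0,\infty)$ and we are in the setting of Lemma~\ref{repes}. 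Under $b\le 1/\Lambda$ the regime $A(u)>0$ is non-empty, as witnessed by the minimizer $u_\Lambda$ of \eqref{minmm}. For such $u$ the unique positive root of $\psi_u'$ is
\[
t_{\max}(u)=\left(\frac{a(p-q)\|u\|_{X_0}^{p}}{(2p-q)A(u)}\right)^{1/p},
\]
and $\psi_u$ is strictly increasing on $(0,t_{\max})$, strictly decreasing on $(t_{\max},\infty)$, with $\psi_u(t_{\max})>0$ and $\psi_u(t)\to-\infty$ as $t\to\infty$.

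For part (i), $u\in H^+$, I would substitute $t_{\max}$ into $\psi_u$ and invoke the bound $A(u)\le(1/\Lambda-b)\|u\|_{X_0}^{2p}$, which follows directly from the definition of $\Lambda$ in \eqref{minmm}. After algebraic cancellation this yields a lower estimate $\psi_u(t_{\max})\ge C_0\|u\|_{X_0}^{q}$ with $C_0>0$ independent of $u$. H\"older's inequality gives $\lambda\int_\Omega f|u|^q\,dx\le\lambda\|f\|_{L^{r/(r-q)}}S_r^{-q}\|u\|_{X_0}^{q}$, so choosing $\lambda^0>0$ with $C_0>\lambda^0\|f\|_{L^{r/(r-q)}}S_r^{-q}$ forces $\psi_u(t_{\max})>\lambda\int f|u|^q$ for every $\lambda\in(0,\lambda^0)$. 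The intermediate value theorem, applied on the increasing branch $(0,t_{\max})$ and the decreasing branch $(t_{\max},\infty)$ separately, produces unique $0<t^+<t_{\max}<t^-$ with $\psi_u(t^\pm)=\lambda\int f|u|^q$, i.e.\ $t^\pm u\in\mathcal{N}_\lambda$. Since $\psi_u'(t^+)>0>\psi_u'(t^-)$, relation \eqref{psr} places $t^\pm u\in\mathcal{N}_\lambda^\pm$, and the sign of $\phi_u'$ coincides with that of $\psi_u(t)-\lambda\int f|u|^q$, yielding the min/max characterization on the three intervals $(0,t^+)$, $(t^+,t^-)$, $(t^-,\infty)$.

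Part (ii) is more direct and imposes no restriction on $\lambda$. For $u\in H^-$, the target value $\lambda\int_\Omega f|u|^q$ is strictly negative, whereas $\psi_u>0$ on $(0,t_{\max}]$ and decreases strictly from $\psi_u(t_{\max})>0$ to $-\infty$ on $[t_{\max},\infty)$. Hence $\psi_u=\lambda\int f|u|^q$ has no solution on $(0,t_{\max}]$ and exactly one root $t^->t_{\max}$; since $\psi_u'(t^-)<0$, \eqref{psr} gives $t^-u\in\mathcal{N}_\lambda^-$, and $\phi_u'>0$ on $(0,t^-)$ with $\phi_u'<0$ on $(t^-,\infty)$ yields $\mathcal{J}_\lambda(t^-u)=\sup_{t\ge 0}\mathcal{J}_\lambda(tu)$.

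The main obstacle is the uniform lower bound in (i): the explicit $t_{\max}$ involves $A(u)^{-1/p}$, which blows up as $A(u)\downarrow 0$, so one must verify that after substitution the net dependence on $A(u)$ enters only through a bounded factor. The computation mirrors the derivation of $\lambda_2$ in Lemma~\ref{L37}: substituting $t_{\max}$ gives $\psi_u(t_{\max})$ proportional to $\|u\|_{X_0}^{2p-q}\,A(u)^{(q-p)/p}$, and combining with the estimate $A(u)\le(1/\Lambda-b)\|u\|_{X_0}^{2p}$ (with negative exponent $(q-p)/p$) reduces this to a constant multiple of $\|u\|_{X_0}^{q}$, as required.
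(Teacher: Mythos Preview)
Your proposal is correct and follows essentially the same route as the paper: analyze the shape of $\psi_u$ via its unique critical point $t_{\max}$, bound $\psi_u(t_{\max})$ below by a constant times $\|u\|_{X_0}^q$ using the definition of $\Lambda$, compare with the H\"older bound on $\lambda\int_\Omega f|u|^q$, and then read off $t^\pm$ and the min/max structure from the sign of $\psi_u'$ together with \eqref{psr}. Your explicit computation of $t_{\max}$ and of $\psi_u(t_{\max})$ reproduces exactly the paper's stated lower bound $\frac{p\,a^{(2p-q)/p}}{2p-q}\Big(\frac{(p-q)\Lambda}{(2p-q)(1-\Lambda b)}\Big)^{(p-q)/p}\|u\|_{X_0}^q$, and your observation that the whole analysis tacitly requires $A(u)>0$ (otherwise one is back in the monotone regime of Lemma~\ref{repes}) is a clarification the paper leaves implicit.
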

\begin{proof}
(i) From the equation \eqref{siutrep} {and \eqref{siutrepp}}, we have $t_{\max}>0$ such that
\begin{align*}
\psi_u(t_{\max})\geq\frac{pa^{\frac{2p-q}{p}}}{2p-q}\left(\frac{(p-q)\Lambda}{(2p-q)(1-\Lambda b)}\right)^{\frac{p-q}{p}}\|u\|_{X_0}^q
\end{align*}
Now if $\lambda<\lambda^0,$ we have unique $t^+<t_{\max}<t^-$ such that $\psi_u(t^\pm)=\lambda\int_\Omega f(x)|u|^qdx$  that is,  $t^\pm u\in \mathcal{N}_\lambda$. Since $\psi^{\prime}_u(t)>0$ for $0<t<t_{\max}$ and $\psi^{\prime}_u(t)<0$ for $t>t_{\max}$ we get $\psi^{\prime}_u(t^+)>0$ and $\psi^{\prime}_u(t^-)<0$. So by the equation \eqref{psr}, $t^\pm u\in \mathcal{N}_\lambda^\pm$.\\
\noindent Again from equation \eqref{psr}, we observe
$\frac{d}{dt}\mathcal{J}_\lambda(t^+u)=0,\; \frac{d^2}{dt^2}\mathcal{J}_\lambda(t^+u)>0$ for all $0<t<t_{\max}$. Hence $\mathcal{J}_\lambda(t^+u)=\displaystyle\inf_{0<t<t_{\max}}\mathcal{J}_\lambda(tu)$. Similarly,
$\frac{d}{dt}\mathcal{J}_\lambda(t^-u)=0,\; \frac{d^2}{dt^2}\mathcal{J}_\lambda(t^-u)<0$ for all $t>t_{\max}$. Hence $\mathcal{J}_\lambda(t^-u)=\displaystyle\sup_{t>t_{\max}}\mathcal{J}_\lambda(tu)$.\\
(ii) For $u\in H^-$, $\psi^{\prime}_u(t)<0$ for $t>t_{\max}$ and $\psi_u(t)\rightarrow -\infty$ as $t\rightarrow \infty$. So we have a unique $t^-(u)>t_{\max}$ such that $\psi_u(t^-)=\lambda\int_\Omega f(x)|u|^qdx$. From equation \eqref{psr}, $t^-u\in \mathcal{N}_\lambda^-$ and $\mathcal{J}_\lambda(t^-u)=\displaystyle\sup_{t\geq 0}\mathcal{J}_\lambda(tu)$.
\end{proof}
\begin{lem}\label{repem}
Assume $r=2p,\;b<\frac{1}{\Lambda}$. Then  $\theta_\lambda^+<0$.
\end{lem}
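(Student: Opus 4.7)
The strategy is to exploit the algebraic coincidence at $r=2p$: in the natural rearrangement of $\mathcal{J}_\lambda$ on the Nehari manifold the coefficient $\tfrac{1}{2p}-\tfrac{1}{r}$ of $b\|u\|_{X_0}^{2p}$ collapses to zero, and this collapse combined with the strict sign $\phi_u''(1)>0$ characterizing $\mathcal{N}_\lambda^+$ will force a uniform strictly negative bound on the energy. First, since $f^+\not\equiv 0$, I pick any $v\in H^+$; because $b<1/\Lambda$, Lemma~\ref{repex}(i) supplies a unique $t^+(v)\in(0,t_{\max}(v))$ with $t^+(v)v\in\mathcal{N}_\lambda^+$ for every $\lambda\in(0,\lambda^0)$, so $\mathcal{N}_\lambda^+\neq\emptyset$.

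Next, for any $u\in\mathcal{N}_\lambda$ I would eliminate the $g$-integral from $\mathcal{J}_\lambda(u)$ through the Nehari identity $\int_\Omega g(x)|u|^{2p}\,dx=a\|u\|_{X_0}^p+b\|u\|_{X_0}^{2p}-\lambda\int_\Omega f(x)|u|^q\,dx$. Setting $r=2p$ makes the two $b\|u\|_{X_0}^{2p}$ contributions cancel, yielding the clean formula
\[
\mathcal{J}_\lambda(u)=\frac{a}{2p}\|u\|_{X_0}^p-\frac{2p-q}{2pq}\,\lambda\int_\Omega f(x)|u|^q\,dx.
\]
Performing the same elimination in \eqref{phid2} with $M(t)=a+bt$ and $r=2p$ reduces $\phi_u''(1)$ to
\[
\phi_u''(1)=-p\,a\|u\|_{X_0}^p+(2p-q)\,\lambda\int_\Omega f(x)|u|^q\,dx,
\]
so the defining condition $\phi_u''(1)>0$ of $\mathcal{N}_\lambda^+$ is exactly $\lambda\int_\Omega f(x)|u|^q\,dx>\tfrac{pa}{2p-q}\|u\|_{X_0}^p$.

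Inserting this strict lower bound into the displayed formula for $\mathcal{J}_\lambda(u)$ gives, for every $u\in\mathcal{N}_\lambda^+$,
\[
\mathcal{J}_\lambda(u)<\frac{a}{2p}\|u\|_{X_0}^p-\frac{2p-q}{2pq}\cdot\frac{pa}{2p-q}\|u\|_{X_0}^p=-\frac{a(p-q)}{2pq}\|u\|_{X_0}^p<0,
\]
whence $\theta_\lambda^+=\inf_{\mathcal{N}_\lambda^+}\mathcal{J}_\lambda<0$. No essential analytic obstacle is expected — the whole argument is algebraic bookkeeping; the only subtlety is to eliminate the $g$-integral (not the $f$-integral) from both $\mathcal{J}_\lambda(u)$ and $\phi_u''(1)$, so that the vanishing coefficient at $r=2p$ is the one being exploited and the surviving terms line up cleanly to produce the factor $-\tfrac{a(p-q)}{2pq}\|u\|_{X_0}^p$.
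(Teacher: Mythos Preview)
Your proof is correct and follows essentially the same approach as the paper: both use the Nehari identity to eliminate $\int_\Omega g|u|^{2p}$ from $\mathcal{J}_\lambda(u)$ (obtaining the same formula $\tfrac{a}{2p}\|u\|^p-\tfrac{2p-q}{2pq}\lambda\int f|u|^q$), then use the condition $\phi_u''(1)>0$ to derive $\lambda(2p-q)\int f|u|^q>pa\|u\|^p$ and substitute to get the bound $-\tfrac{a(p-q)}{2pq}\|u\|^p$. The only cosmetic difference is that the paper writes $\phi_u''(1)>0$ in the equivalent form $\phi_u''(1)+\phi_u'(1)>0$ before combining with the Nehari identity, whereas you eliminate the $g$-integral from $\phi_u''(1)$ directly.
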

\begin{proof}
Let $w_\lambda\in X_0$ be such that $\int_\Omega f(x)|w_\lambda|^qdx >0$. Then from Lemma \ref{repex}, $\mathcal{N}_\lambda^+\neq \emptyset$. Take $u\in \mathcal{N}_\lambda^+$, we have
\begin{equation}\label{so11}
a\|u\|_{X_0}^p+b\|u\|_{X_0}^{2p}-\lambda\int_\Omega f(x)|u|^qdx-\int_\Omega g(x)|u|^{2p}dx=0
\end{equation}
\begin{equation}\label{so22}
pa\|u\|_{X_0}^p+2pb\|u\|_{X_0}^{2p}-\lambda q\int_\Omega f(x)|u|^qdx-2p\int_\Omega g(x)|u|^{2p}dx>0
\end{equation}
Now multiplying equation\eqref{so11} by $2p$ and subtracting from equation \eqref{so22}, we get
\begin{align}\label{re1}
\lambda(2p-q)\int_{\Omega} f(x)|u|^qdx\ge pa\|u\|_{X_0}^p
\end{align}
Now using equation \eqref{so11} and \eqref{re1} , we get
\begin{align*}
\mathcal{J}_\lambda (u)&=\frac{1}{p}\widehat{M}(\|u\|_{X_0}^p)-\frac{\lambda}{q}\int_\Omega f(x)|u|^qdx-\frac{1}{2p}\int_\Omega g(x)|u|^{2p}dx,\\
&=a\left(\frac{1}{p}-\frac{1}{2p}\right)\|u\|_{X_0}^p-\lambda\left(\frac{1}{q}-\frac{1}{2p}\right)\int_\Omega f(x)|u|^q dx\\
&=\frac{a}{2p}\|u\|_{X_0}^p-\lambda\frac{(2p-q)}{2pq}\int_\Omega f(x)|u|^q dx\\
&={\frac{1}{2p}a\|u\|_{X_0}^p-\frac{1}{2q}a\|u\|_{X_0}^p<-\frac{(p-q)}{2pq}a\|u\|_{X_0}^{p}<0}.
\end{align*}
Hence $\theta_\lambda^+<0$.
\end{proof}
\noindent \textbf{Proof of Theorem \ref{th22}:} Using Lemmas \ref{repd}, \ref{repes}, \ref{repex}, \ref{repem}, \ref{pscn} and repeating the argument given in proof of Theorem \ref{th1}, we get two non-negative solutions $u_\lambda^+\in \mathcal{N}_\lambda^+$ and $u_\lambda^-\in \mathcal{N}_\lambda^-$.\\
\noindent \textbf{Proof of Theorem \ref{th33}:}\\
Now to prove Theorem \ref{th33}, we use truncation technique as used in \cite{col} and \cite{cyuyc} for Laplacian operator.
{We consider the following truncated problem,}
\begin{equation}\label{fraclapl}
(P_{\lambda, k})\;
\left\{\begin{array}{rllll}
M_k\left(\displaystyle\int_{\mathbb{R}^{2n}}\frac{|u(x)-u(y)|^p}{\left|x-y\right|^{n+ps}}dx\,dy\right)(-\Delta)^{s}_p u
&=\lambda f(x)|u|^{q-2}u+ g(x)\left|u\right|^{r-2}u\;  \text{in } \Omega,\\
u&=0 \;\mbox{in } \mathbb{R}^{n}\setminus \Omega,
\end{array}
\right.
\end{equation}
where $k\in \left(\frac{a(r-p)}{rb}, \frac{a(r-p)}{pb}\right)$ and
\begin{equation}\label{trun}
M_k(t)=
\left\{\begin{array}{ll}
M(t) & \text{if } 0\leq t\leq k,\\
M(k) & \text{if } t>k,
\end{array}\right.
\end{equation}
is a truncation of $M(t)$.\\
The energy functional associated to the problem is given as
\begin{equation}\label{enjk}
\mathcal{J}_{\lambda, k} (u)=\frac{1}{p}\widehat{M_k}(\|u\|_{X_0}^p)-\frac{\lambda}{q}\int_\Omega f(x)|u|^qdx-\frac{1}{r}\int_\Omega g(x)|u|^rdx
\end{equation}
where {$\widehat{M_k}$ is the primitive of $M_k$. We define}
\begin{align*}
\mathcal{N}_{\lambda, k}=\{{u\in X_0\setminus \{0\}} :\langle \mathcal{J}_{\lambda, k}^\prime(u), u\rangle=0\}
\end{align*}
Now fiber maps $\phi_{u,k}: \mathbb R^+\rightarrow \mathbb R$ are
defined as $\phi_{u,k}(t)=\mathcal{J}_{\lambda, k}(tu)$. For
$u\in X_0$, we have
\begin{align}
\phi_{u,k}(t) &=\frac{1}{p}\widehat{M_k}(t^p\|u\|_{X_0}^p)-\frac{\lambda}{q}t^q\int_\Omega f(x)|u|^qdx-\frac{1}{r}t^r\int_\Omega g(x)|u|^rdx,\\\label{mk1}
\phi_{u,k}^{\prime}(1) &=M_k(\|u\|_{X_0}^p)\|u\|_{X_0}^p-\lambda \int_\Omega f(x)|u|^qdx-\int_\Omega g(x)|u|^rdx,\\\label{mk2}
\phi_{u,k}^{\prime\prime}(1) &= (p-1)M_k(\|u\|_{X_0}^p)\|u\|_{X_0}^p+M_k^{\prime}(\|u\|_{X_0}^p)p\|u\|_{X_0}^{2p}\\
&-\lambda(q-1) \int_\Omega f(x)|u|^qdx-(r-1)\int_\Omega g(x)|u|^rdx.
\end{align}
Then it is easy to see that $u\in \mathcal N_{\lambda,k}$ if and only if
$\phi_{u,k}^{\prime}(1)=0$. That is,
\begin{equation}\label{Meq2}
M_k(\|u\|_{X_0}^p)\|u\|_{X_0}^p - \lambda \int_{\Omega} f(x)|u|^q dx-
\int_{\Omega}g(x)|u|^{r}dx =0 .
\end{equation}
We split
$\mathcal {N}_{\lambda,k}$ into three parts corresponding to local minima,
local maxima and points of inflection. For this, we set
\begin{align*}
\mathcal N_{\lambda,k}^{\pm}&:= \left\{u\in \mathcal N_{\lambda,k}:
\phi_{u,k}^{\prime\prime}(1)
\gtrless0\right\} =\left\{tu\in X_0 : \phi_{u,k}^{\prime}(t)=0,\; \phi_{u,k}^{''}(t)\gtrless  0\right\},\\
\mathcal N_{\lambda,k}^{0}&:= \left\{u\in \mathcal N_{\lambda,k}:
\phi_{u,k}^{\prime\prime}(1) = 0\right\}=\left\{tu\in X_{0} :
\phi_{u,k}^{\prime}(t)=0,\; \phi_{u,k}^{''}(t)= 0\right\}.
\end{align*}
We need following results to prove theorem \ref{th33}.
\begin{lem}
The energy functional $ \mathcal{J}_{\lambda, k}$ is coercive and bounded below on $ \mathcal{N}_{\lambda, k}$.
\end{lem}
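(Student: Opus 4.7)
The strategy is to eliminate the critical-growth term $\int_\Omega g(x)|u|^r\,dx$ using the constraint $u\in\mathcal{N}_{\lambda,k}$, and then exploit the fact that the truncated Kirchhoff function $M_k$ is bounded above and below in order to get a lower bound that behaves like $\|u\|_{X_0}^p$ minus a $\|u\|_{X_0}^q$ term. Since $q<p$, this will yield both coercivity and boundedness from below.

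First I would use \eqref{Meq2} to write, for $u\in\mathcal{N}_{\lambda,k}$,
\begin{equation*}
\int_\Omega g(x)|u|^r\,dx=M_k(\|u\|_{X_0}^p)\|u\|_{X_0}^p-\lambda\int_\Omega f(x)|u|^q\,dx,
\end{equation*}
and substitute this into \eqref{enjk} to obtain
\begin{equation*}
\mathcal{J}_{\lambda,k}(u)=\frac{1}{p}\widehat{M_k}(\|u\|_{X_0}^p)-\frac{1}{r}M_k(\|u\|_{X_0}^p)\|u\|_{X_0}^p+\lambda\left(\frac{1}{r}-\frac{1}{q}\right)\int_\Omega f(x)|u|^q\,dx.
\end{equation*}

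Next I would estimate the Kirchhoff piece $\Psi(t):=\frac{1}{p}\widehat{M_k}(t)-\frac{1}{r}M_k(t)\,t$ by splitting on the two branches of the truncation \eqref{trun}. When $t\le k$, $M_k(t)=a+bt$ and $\widehat{M_k}(t)=at+\frac{b}{2}t^2$, so
\begin{equation*}
\Psi(t)=\left(\frac{1}{p}-\frac{1}{r}\right)a\,t+\left(\frac{1}{2p}-\frac{1}{r}\right)b\,t^{2};
\end{equation*}
the second coefficient is negative (since $r<2p$), but $t\le k$ keeps that term bounded below by the constant $(\frac{1}{2p}-\frac{1}{r})bk^{2}$. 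When $t>k$, $M_k(t)=a+bk$ and a direct computation gives
\begin{equation*}
\Psi(t)=\left(\frac{1}{p}-\frac{1}{r}\right)(a+bk)\,t-\frac{bk^{2}}{2p},
\end{equation*}
which is linear with positive slope since $r>p$. Combining both cases, there are constants $C_1=(\frac{1}{p}-\frac{1}{r})a>0$ and $C_2=C_2(k)\ge 0$, independent of $u$, such that
\begin{equation*}
\Psi(\|u\|_{X_0}^p)\ge C_1\|u\|_{X_0}^p-C_2.
\end{equation*}

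Finally I would handle the $f$-term with H\"older's inequality and the Sobolev embedding $X_0\hookrightarrow L^{r}(\Omega)$ exactly as in Lemma \ref{le44}: using $f\in L^{r/(r-q)}(\Omega)$,
\begin{equation*}
\left|\int_\Omega f(x)|u|^q\,dx\right|\le \|f\|_{L^{r/(r-q)}(\Omega)}\,S_r^{-q}\,\|u\|_{X_0}^q,
\end{equation*}
and so
\begin{equation*}
\mathcal{J}_{\lambda,k}(u)\ge C_1\|u\|_{X_0}^p-\lambda\left(\frac{1}{q}-\frac{1}{r}\right)\|f\|_{L^{r/(r-q)}(\Omega)}\,S_r^{-q}\,\|u\|_{X_0}^q-C_2.
\end{equation*}
Since $q<p$, the right-hand side tends to $+\infty$ as $\|u\|_{X_0}\to\infty$ and has a finite infimum on $[0,\infty)$, which proves simultaneously that $\mathcal{J}_{\lambda,k}$ is coercive and bounded below on $\mathcal{N}_{\lambda,k}$. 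The only delicate point is keeping careful track of the two branches of $M_k$ so that the $\|u\|_{X_0}^{2p}$ contribution, which carries the \emph{wrong} sign when $r<2p$, is harmlessly absorbed into the constant $C_2(k)$ on the bounded region $\|u\|_{X_0}^p\le k$.
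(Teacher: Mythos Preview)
Your proof is correct and follows the same opening move as the paper: use the Nehari constraint \eqref{Meq2} to eliminate $\int_\Omega g|u|^r\,dx$ and then control the $f$-term by H\"older plus Sobolev. The difference lies in how the Kirchhoff piece $\Psi(t)=\tfrac{1}{p}\widehat{M_k}(t)-\tfrac{1}{r}M_k(t)t$ is bounded below. The paper simply uses the crude two-sided bounds $\widehat{M_k}(t)\ge at$ and $M_k(t)\le M(k)$ to get $\Psi(t)\ge\big(\tfrac{a}{p}-\tfrac{M(k)}{r}\big)t$, and then invokes the assumed range $k<\tfrac{a(r-p)}{pb}$, which is exactly what makes this coefficient positive. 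You instead compute $\Psi$ explicitly on each branch of the truncation and absorb the negative $\|u\|_{X_0}^{2p}$ contribution (present only on the bounded region $t\le k$) into an additive constant $C_2(k)$. Your route is a little longer but does not rely on the upper restriction on $k$; the paper's route is a one-line estimate but is precisely where the choice of the interval $I=\big(\tfrac{a(r-p)}{rb},\tfrac{a(r-p)}{pb}\big)$ enters. Either argument yields the lemma.
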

\begin{proof}
Using equation \eqref{Meq2} for $u\in \mathcal{N}_{\lambda, k}$, we get
\begin{align*}
\mathcal{J}_{\lambda, k} (u)&=\frac{1}{p}\widehat{M_k}(\|u\|_{X_0}^p)-\frac{\lambda}{q}\int_\Omega f(x)|u|^qdx-\frac{1}{r}\int_\Omega g(x)|u|^rdx,\\
&=\frac{1}{p}\widehat{M_k}(\|u\|_{X_0}^p)-\frac{1}{r}{M_k}(\|u\|_{X_0}^p)\|u\|_{X_0}^p-\lambda\left(\frac{1}{q}-\frac{1}{r}\right)\int_\Omega f(x)|u|^qdx,\\
&>\left(\frac{a}{p}-\frac{M(k)}{r}\right)\|u\|_{X_0}^p-\lambda\left(\frac{1}{q}-\frac{1}{r}\right)l^{\frac{r}{r-q}}S_r^{-q}\|u\|_{X_0}^q.
\end{align*}
Thus $ \mathcal{J}_{\lambda, k}$ is coercive and bounded below as {$\frac{a}{p}>\frac{M(k)}{r}$} for selected choice of $k$.
\end{proof}
\begin{lem}
There exists a $\lambda^1>0$ such that $ \mathcal{N}_{\lambda, k}^0=\emptyset$ for all $\lambda\in (0,\lambda^1)$.
\end{lem}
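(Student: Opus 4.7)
The plan is to imitate the proof of Lemma \ref{2s1}, but keep careful track of the piecewise definition of $M_k$. Assume $u \in \mathcal{N}_{\lambda,k}^{0}$. Eliminating $\int_\Omega g(x)|u|^r\,dx$ between $\phi_{u,k}'(1)=0$ and $\phi_{u,k}''(1)=0$ produces the master identity
\begin{equation*}
(r-p)M_k(\|u\|_{X_0}^p)\|u\|_{X_0}^p - p\,M_k'(\|u\|_{X_0}^p)\|u\|_{X_0}^{2p} = \lambda(r-q)\int_\Omega f(x)|u|^q\,dx,
\end{equation*}
while eliminating $\lambda\int_\Omega f(x)|u|^q\,dx$ produces a second relation which, on $\{\|u\|_{X_0}^p \le k\}$ where $M_k=M$, coincides with \eqref{2.11}. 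The argument then splits on whether $\int_\Omega f(x)|u|^q\,dx$ vanishes and, crucially, on whether $\|u\|_{X_0}^p$ lies above or below $k$, since $M_k'(\|u\|_{X_0}^p)$ equals $b$ in the first regime and $0$ in the second.

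The case $\int_\Omega f(x)|u|^q\,dx=0$ is handled at once. If $\|u\|^p>k$, the master identity gives $(r-p)M(k)\|u\|^p=0$, absurd. If $\|u\|^p\le k$, it gives $\|u\|^p=\frac{(r-p)a}{(2p-r)b}$; but since $r<2p$ forces $2p-r<p$, this value strictly exceeds $\frac{a(r-p)}{pb}$, which by \eqref{newint} is an upper bound for $k$, contradicting $\|u\|^p\le k$. Next, when $\int_\Omega f(x)|u|^q\,dx\ne 0$ and $\|u\|^p>k$, the master identity reduces to $(r-p)M(k)\|u\|^p=\lambda(r-q)\int_\Omega f(x)|u|^q\,dx$; H\"older's inequality yields $\|u\|_{X_0}^{p-q}\le C\lambda$, incompatible with the fixed lower bound $\|u\|_{X_0}^p\ge k$ once $\lambda$ is small.

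The delicate regime is $\int_\Omega f(x)|u|^q\,dx\neq 0$ together with $\|u\|_{X_0}^p\le k$. Here $M_k=M$, so the two relations reduce exactly to \eqref{2.11}--\eqref{2.12}, and I would like to re-run the $E_\lambda$-argument of Lemma \ref{2s1}. The main obstacle is that $r<2p$ in the present setting, so the coefficient $(r-2p)$ appearing in \eqref{2.12} is \emph{negative} and the direct lower bound on $E_\lambda$ used in Lemma \ref{2s1} breaks down. This is exactly where the prescribed range $k\in\bigl(\tfrac{a(r-p)}{rb},\tfrac{a(r-p)}{pb}\bigr)$ is used: the upper bound $k<\tfrac{a(r-p)}{pb}$ gives $(2p-r)b\|u\|_{X_0}^p<\tfrac{(2p-r)(r-p)a}{p}$ and hence the absorption inequality
\begin{equation*}
(r-p)a\|u\|_{X_0}^p+(r-2p)b\|u\|_{X_0}^{2p}\;\ge\;\frac{(r-p)^2 a}{p}\|u\|_{X_0}^p.
\end{equation*}
With positivity restored, the Sobolev lower bound $\|u\|_{X_0}\ge\bigl(\tfrac{(p-q)aS_r^{r}}{(r-q)\|g\|_\infty}\bigr)^{1/(r-p)}$ coming from \eqref{2.11}--\eqref{2.13} combined with the H\"older estimate on $\int_\Omega f|u|^q$ reproduces the chain of inequalities in Lemma \ref{2s1}, forcing $E_\lambda(u)>0$ for all sufficiently small $\lambda$, contradicting $E_\lambda(u)=0$. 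Choosing $\lambda^1$ to be the minimum of the thresholds obtained in the two nontrivial subcases yields the stated conclusion.
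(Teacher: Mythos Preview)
Your argument is correct and follows essentially the same route as the paper: both hinge on the absorption inequality $(r-p)a-(2p-r)bk>0$, guaranteed by the upper bound $k<\tfrac{a(r-p)}{pb}$, to make the analogue of \eqref{2.12} deliver a positive lower bound despite $r<2p$. The paper packages this slightly more economically---splitting only on $\|u\|_{X_0}^p\lessgtr k$ and comparing the resulting two-sided bounds on $\|u\|_{X_0}$ directly rather than through $E_\lambda$---so your separate treatment of $\int_\Omega f|u|^q\,dx=0$ is not actually needed (it is absorbed once the constant $C_2=\min\{(r-p)a-(2p-r)bk,\,(r-p)M(k)\}$ is seen to be positive); also, it is $r>p$, not $r<2p$, that gives $2p-r<p$.
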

\begin{proof}
Suppose not, then for  $u\in  \mathcal{N}_{\lambda, k}^0$ we have two cases\\
\textbf{Case 1:}  $\|u\|_{X_0}^p\leq k$\\
 From equations \eqref{mk1} and \eqref{mk2}, we have
\begin{align}\label{tt1}
(p-q)a\|u\|_{X_0}^p+(2p-q)b\|u\|_{X_0}^{2p}-(r-q)\int_\Omega g(x)|u|^r dx =0\\ \label{tt2}
(r-p)a\|u\|_{X_0}^p-(2p-r)b\|u\|_{X_0}^{2p}-\lambda(r-q)\int_\Omega f(x)|u|^q dx =0
\end{align}
\textbf{Case 2:}  $\|u\|_{X_0}^p>k$
\begin{align}\label{tt3}
(p-q)M(k)\|u\|_{X_0}^p-(r-q)\int_\Omega g(x) |u|^r dx =0. \\\label{tt4}
(r-p)M(k)\|u\|_{X_0}^p-\lambda(r-q)\int_\Omega f(x)|u|^q dx =0.
\end{align}
Now from equations {\eqref{tt1}, \eqref{tt3}} and H\"older's inequality, we get
\begin{align} \label{ttt1}
(p-q)a\|u\|_{X_0}^p\leq (r-q)\|g\|_\infty S_r^{-r}\|u\|_{X_0}^r. \\\label{ttt2}
(p-q)M(k)\|u\|_{X_0}^p\leq (r-q)\|g\|_\infty S_r^{-r}\|u\|_{X_0}^r.
\end{align}
From {\eqref{ttt1} and \eqref{ttt2}}, we get
\begin{align}\label{tT1}
C_1\|u\|_{X_0}^p \leq (r-q)  \|g\|_\infty S_r^{-r}\|u\|_{X_0}^r .
\end{align}
{where $C_1=(p-q)\min \{a, M(k)\}$. Similarly, from \eqref{tt2} and \eqref{tt4} and H\"older's inequality, we get}
\begin{align}\label{ttt3}
[(r-p)a-(2p-r)bk]\|u\|_{X_0}^{p}\leq \lambda(r-q)l^{\frac{r}{r-q}}S_{r}^{-q}\|u\|_{X_0}^q. \\\label{ttt4}
(r-p)M(k)\|u\|_{X_0}^p\leq\lambda(r-q)l^{\frac{r}{r-q}}S_{r}^{-q}\|u\|_{X_0}^q.
\end{align}
{From \eqref{ttt3} and \eqref{ttt4},} we get
\begin{align}\label{tT2}
C_2\|u\|_{X_0}^p \leq \lambda(r-q)l^{\frac{r}{r-q}}S_{r}^{-q}\|u\|_{X_0}^q.
\end{align}
{where $C_2=\min \{[(r-p)a-(2p-r)bk], (r-p)M(k)\}$. From \eqref{tT1} and \eqref{tT2},} we get
\begin{align}\label{fe}
\left(\frac{C_1S_r^{r}}  {(r-q)\|g\|_\infty}\right)^{\frac{1}{r-p}}\leq \|u\|_{X_0}\leq \left(\frac  {\lambda(r-q)l^{\frac{r}{r-q}}}{C_2S_{r}^{q}}\right)^{\frac{1}{p-q}}.
\end{align}
{Hence \eqref{fe} holds if} $\lambda \geq \lambda^1=\left(\frac{C_1S_r^{r}}  {(r-q)\|g\|_\infty}\right)^{\frac{p-q}{r-p}}\left(\frac  {C_2S_{r}^{q}}{(r-q)l^{\frac{r}{r-q}}}\right)$. Hence for $\lambda <\lambda^1$,\; $\mathcal{N}_{\lambda, k}^0=\phi$. \end{proof}
\noindent Next, we define $\theta_{\lambda, k}^+=\displaystyle\inf_{u\in\mathcal{N}_{\lambda, k}} \mathcal{J}_{\lambda, k}(u)$. Then we have following Lemma.
\begin{lem}\label{lem415}
$\theta_{\lambda, k}^+<0,\; \text{for all} \;\lambda \in (0, \lambda^1).$
\end{lem}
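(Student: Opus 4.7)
The strategy parallels Lemmas~\ref{L35} and~\ref{repem}: produce an explicit element of $\mathcal{N}_{\lambda,k}^+$ on which $\mathcal{J}_{\lambda,k}$ takes a strictly negative value. By hypothesis \textbf{(f2)}, I can choose $v \in X_0$ with $F := \int_\Omega f(x)|v|^q\,dx > 0$. The plan is then to study the fiber map
\[
\phi_{v,k}(t) = \mathcal{J}_{\lambda,k}(tv) = \frac{1}{p}\widehat{M_k}(t^p\|v\|_{X_0}^p) - \frac{\lambda t^q}{q} F - \frac{t^r}{r}\int_\Omega g(x)|v|^r\,dx.
\]
Since $\widehat{M_k}(0)=0$ we have $\phi_{v,k}(0)=0$, and because $q<p<r$ the leading contribution to $\phi_{v,k}(t)$ as $t\to 0^+$ is the term $-\lambda t^q F/q<0$. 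Hence $\phi_{v,k}(t)<0$ and $\phi_{v,k}'(t)<0$ for all sufficiently small $t>0$.

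Next I would construct a local minimizer $t^+(v)>0$ of $\phi_{v,k}$ via an analogue of Lemma~\ref{L37} adapted to the truncation. Setting
\[
\psi_{v,k}(t) = t^{p-q} M_k(t^p\|v\|_{X_0}^p)\,\|v\|_{X_0}^p - t^{r-q}\int_\Omega g(x)|v|^r\,dx,
\]
a direct differentiation gives $\phi_{v,k}'(t)=t^{q-1}\bigl(\psi_{v,k}(t)-\lambda F\bigr)$, so critical points of $\phi_{v,k}$ correspond to solutions of $\psi_{v,k}(t)=\lambda F$. Observing that $\psi_{v,k}(0^+)=0$ and that $\psi_{v,k}$ either rises to a finite maximum before decaying to $-\infty$ (when $\int_\Omega g|v|^r>0$) or increases monotonically to $+\infty$ (when $\int_\Omega g|v|^r\le 0$), for $\lambda\in(0,\lambda^1)$ small the equation $\psi_{v,k}(t)=\lambda F$ admits a smallest positive root $t^+$ at which $\psi_{v,k}'(t^+)>0$. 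Via the analogue of \eqref{psr}, this translates into $t^+v\in\mathcal{N}_{\lambda,k}^+$.

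Since $t^+$ is the first positive zero of $\phi_{v,k}'$ and $\phi_{v,k}'<0$ on $(0,t^+)$, the fiber $\phi_{v,k}$ is strictly decreasing on $[0,t^+]$, whence $\phi_{v,k}(t^+)<\phi_{v,k}(0)=0$. Consequently
\[
\theta_{\lambda,k}^+ \;\le\; \mathcal{J}_{\lambda,k}(t^+ v) \;=\; \phi_{v,k}(t^+) \;<\; 0,
\]
as required. The main technical obstacle I foresee is the piecewise definition of $M_k$: $\psi_{v,k}$ is only piecewise smooth across the threshold $t^p\|v\|_{X_0}^p=k$, so the clean peak/monotone analysis of Lemma~\ref{L37} does not transfer verbatim. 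However, for $\lambda$ close to zero the smallest root $t^+$ lies deep in the region $t^p\|v\|_{X_0}^p<k$ where $M_k\equiv M$, so the argument of Lemma~\ref{L37} applies there unchanged and no extra work at the threshold is needed.
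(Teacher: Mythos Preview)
Your fiber-map strategy is genuinely different from the paper's argument. The paper does not construct a single test element; instead it takes an \emph{arbitrary} $u\in\mathcal N_{\lambda,k}^{+}$ and shows $\mathcal J_{\lambda,k}(u)<0$ by pure algebra, splitting into the two regimes $\|u\|_{X_0}^p\le k$ and $\|u\|_{X_0}^p>k$. In each regime one combines $\phi_{u,k}'(1)=0$ with $\phi_{u,k}''(1)>0$ to bound $\lambda\int_\Omega f|u|^q$ from below (equations \eqref{ttt3e} and \eqref{tt4e}), substitutes into the energy, and obtains a strictly negative quantity using only the restriction $k\in\bigl(\tfrac{a(r-p)}{rb},\tfrac{a(r-p)}{pb}\bigr)$. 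This yields the stronger statement that \emph{every} element of $\mathcal N_{\lambda,k}^{+}$ has negative energy, uniformly in $\lambda\in(0,\lambda^1)$, and it never needs to locate a root of $\psi_{v,k}$.

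Your route is conceptually clean and has the bonus of proving $\mathcal N_{\lambda,k}^{+}\neq\emptyset$, but as written it has a real gap. The existence of the smallest root $t^{+}$ of $\psi_{v,k}(t)=\lambda F$ requires $\lambda F<\max_{t>0}\psi_{v,k}(t)$ when $G:=\int_\Omega g|v|^r>0$; you only assert this ``for $\lambda$ small'' and never tie the maximum of $\psi_{v,k}$ to the specific threshold $\lambda^1$ defined in the preceding lemma. For $\lambda$ close to $\lambda^1$ it is entirely possible that $\psi_{v,k}$ stays below $\lambda F$ for your chosen $v$, in which case the ray $\{tv\}$ never meets $\mathcal N_{\lambda,k}$ at all and the argument collapses. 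Fixing this would require a quantitative lower bound on $\max_t\psi_{v,k}(t)$ in terms of the same constants that enter $\lambda^1$ --- essentially replicating the estimate of Lemma~\ref{L37} for the truncated functional, which you have not done.

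Finally, the obstacle you flag --- the kink of $M_k$ at $t^p\|v\|_{X_0}^p=k$ --- is not the real difficulty. Since $M_k$ is continuous, $\psi_{v,k}$ is continuous and your crossing argument ($\psi_{v,k}'(t^+)\ge0$, with equality excluded by $\mathcal N_{\lambda,k}^0=\emptyset$) goes through regardless of whether $t^+$ lies below or above the threshold. The place where the proof is incomplete is the existence of $t^+$ over the full range $(0,\lambda^1)$, not its location relative to $k$.
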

\begin{proof}
Let $u\in \mathcal{N}_{\lambda, k}^+$ Then we have two cases.\\
\textbf{Case 1:} $\|u\|_{X_0}^p\leq k$\\
From \eqref{ttt3}, we have
\begin{align}\label{ttt3e}
[(r-p)a-(2p-r)bk]\|u\|_{X_0}^{p}\leq \lambda(r-q)\int_\Omega f(x)|u|^qdx.
\end{align}
Now, using  \eqref{ttt3e}, we have
\begin{align*}
\mathcal{J}_{\lambda, k} (u)&=\frac{1}{p}\widehat{M_k}(\|u\|_{X_0}^p)-\frac{1}{r}{M_k}(\|u\|_{X_0}^p)\|u\|_{X_0}^p-\lambda\left(\frac{1}{q}-\frac{1}{r}\right)\int_\Omega f(x)|u|^qdx\\
&=\left(\frac{1}{p}-\frac{1}{r}\right)a\|u\|_{X_0}^p+\left(\frac{1}{2p}-\frac{1}{r}\right)b\|u\|_{X_0}^{2p}-\lambda\left(\frac{1}{q}-\frac{1}{r}\right)\int_\Omega f(x)|u|^qdx\\
&\leq -\frac{(p-q)(r-p)}{pqr}a\|u\|_{X_0}^p +\frac{(2p-q)(2p-r)}{2pqr}b\|u\|_{X_0}^{2p}\\
&\leq -\frac{k}{pqr}\left((r-p)a-(2p-r)bk\right)<0.
\end{align*}
\textbf{Case 2:} $\|u\|_{X_0}^p> k$\\
From  \eqref{tt4}, we have
\begin{align}\label{tt4e}
0<(r-p)M(k)\|u\|_{X_0}^p<\lambda(r-q)\int_\Omega f(x)|u|^q dx.
\end{align}
Also
\begin{align*}
\widehat{M_k}(t)=\int_0^t M_k(s)ds=\int_0^k M(s)ds+\int_k^t M_k(s)ds= \widehat{M}(k)+M(k)(t-k)\; \textrm{for}\; t>k.
\end{align*}
Using  \eqref{tt4e}, we get
\begin{align*}
\mathcal{J}_{\lambda, k} (u)&=\frac{1}{p}\widehat{M_k}(\|u\|_{X_0}^p)-\frac{1}{r}{M_k}(\|u\|_{X_0}^p)\|u\|_{X_0}^p-\lambda\left(\frac{1}{q}-\frac{1}{r}\right)\int_\Omega f(x)|u|^qdx\\
&=\frac{1}{p}[\widehat{M}(k)-M(k)k]+(\frac{1}{p}-\frac{1}{r})M(k)\|u\|_{X_0}^p-\lambda\left(\frac{1}{q}-\frac{1}{r}\right)\int_\Omega f(x)|u|^qdx\\
&<-\frac{1}{2p}bk^2-\lambda \frac{(r-q)(p-q)}{pqr}\int_\Omega f(x)|u|^q dx <0,
\end{align*}
Hence $\theta_{\lambda, k}^+<0$.
\end{proof}
\noindent
\textbf{Proof of Theorem \ref{th33}(i):} Repeating the same argument as in Theorem \ref{th1} and using Lemma \ref{pscn} and the fact that $\mathcal{J}_{\lambda, M}$ is bounded below in $X_0$, we get a Palais-Smale sequence which converges strongly.\\
\noindent \textbf{Proof of Theorem \ref{th33}(ii):} Let $\theta>0$ and take $\lambda<\hat{\lambda}_0=\min\{\theta, \lambda^1\}$. Then by Lemma \ref{lem415} and  proposition \ref{prp1}, there exists two minimizing sequences satisfying \eqref{pscn1}. Using Lemma \ref{pscn}, up to subsequences, $u_k^\pm\rightarrow u_\lambda^\pm$ strongly in $X_0$. Hence $u_\lambda^\pm\in \mathcal{N}_{\lambda, k}^\pm$ and $\mathcal{J}_{\lambda, k}(u_\lambda^\pm)=\theta_\lambda^\pm$. As $\mathcal{J}_{\lambda, k}(u_\lambda^\pm)=\mathcal{J}_{\lambda, k}(|u_\lambda^\pm|)$ and $|u_\lambda^\pm|\in \mathcal{N}_{\lambda, k}^\pm$, by Lemma \ref{lmcp}, $u_\lambda^\pm$ are non-negative solutions of $P_{\lambda, k}$. Also,\;$\mathcal{N}_{\lambda, k}^+\cap \mathcal{N}_{\lambda, k}^-=\phi$, implies that $u_\lambda^+$ and $u_\lambda^-$ are distinct solutions of problem $P_{\lambda, k}$. Now we claim that $\|u_{\lambda, k}^\pm\|^p\leq k$. If not then by Lemma \ref{two},
\begin{align*}
\frac{a(r-p)}{brL(\theta)}&<\frac{k}{L(\theta)}
<\frac{\|u_{\lambda, k}^\pm\|_{X_0}^p}{L(\lambda)}\\
&<\max\{M(k)^{\frac{q-r+2}{r-1}}, M(k)^{\frac{2}{r-1}}\}=\hat{A}
 \end{align*}
 which implies $b>\frac{a(r-p)}{p\hat{A}L(\theta)}$, a contradiction. Hence $u_\lambda^\pm\in \mathcal{N}_\lambda^\pm$ and  $u_\lambda^+$ and $u_\lambda^-$ are distinct solutions of problem $P_{\lambda}$.
\section{Existence of solution for critical case}
\setcounter{equation}{0}
\noindent To show the existence result in the case of critical nonlinearity, we need the following Lemma.
\begin{lem}\label{cmcr}
Suppose $\{u_j\}$ be a sequence in $X_0$ such that $\mathcal{J}_\lambda(u_j)\rightarrow c<\left(\frac{p_s^*-2p}{2pp_s^*}\right)\displaystyle\frac{(m_0 C)^{n/ps}}{S^{(n-ps)/ps}}-C\lambda^{\frac{p}{p-q}}$ and $\mathcal{J}'_\lambda(u_j)\rightarrow 0$, where C is positive constant depending on $p$ and $q$, then there exists a strongly convergent subsequence.
\end{lem}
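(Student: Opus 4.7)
The plan is to adapt Lions' concentration-compactness principle for fractional Sobolev spaces (see \cite{pal}) to the Kirchhoff setting, carefully tracking the prefactor $M(\|u_j\|_{X_0}^p)$ through every limit. The critical threshold in the statement is exactly the energy cost of a single concentration atom, so the strategy is to rule out such atoms.

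\textbf{Boundedness and weak limits.} First I would show boundedness of $\{u_j\}$ in $X_0$. Subtracting $\tfrac{1}{2p}\langle \mathcal{J}_\lambda'(u_j),u_j\rangle$ from $\mathcal{J}_\lambda(u_j)$ produces a nonnegative combination of $\|u_j\|_{X_0}^p$ and $\|u_j\|_{X_0}^{2p}$ (using $r=p_s^*>2p$, which follows from $n<2ps$) minus $\lambda\bigl(\tfrac{1}{q}-\tfrac{1}{2p}\bigr)\int_\Omega f|u_j|^q\,dx$. The last term is controlled by $\lambda C\|u_j\|_{X_0}^q$ via H\"older and the Sobolev embedding, and since $q<p$ boundedness follows. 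Extract a subsequence with $u_j\rightharpoonup u$ in $X_0$, $u_j\to u$ in $L^\alpha(\Omega)$ for every $\alpha<p_s^*$, $u_j\to u$ a.e., and $\|u_j\|_{X_0}^p\to t_0\geq\|u\|_{X_0}^p$, so that $M(\|u_j\|_{X_0}^p)\to m_0:=M(t_0)\geq a$.

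\textbf{Concentration-compactness and the atom inequality.} Apply the fractional concentration-compactness principle of \cite{pal} to obtain nonnegative bounded measures $\mu,\nu$ on $\overline{\Omega}$ and an at most countable family of points $\{x_k\}_{k\in I}$ with atoms $\mu_k,\nu_k$ satisfying the Sobolev-atom bound $S\,\nu_k^{p/p_s^*}\leq \mu_k$. To rule out atoms, fix $k_0\in I$, pick a standard cutoff $\psi_\epsilon$ concentrated at $x_{k_0}$, and test $\mathcal{J}_\lambda'(u_j)$ against $\psi_\epsilon u_j$. Letting $j\to\infty$ first and then $\epsilon\to 0$, the subcritical term $\int f|u_j|^q\psi_\epsilon$ vanishes by H\"older and compactness, the nonlocal commutator term vanishes by the standard estimate in \cite{pal}, and the Kirchhoff prefactor passes to $m_0$. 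This yields $m_0\mu_{k_0}\leq\nu_{k_0}$, and combined with the Sobolev-atom bound together with $(p_s^*-p)/p_s^*=ps/n$, it forces either $\nu_{k_0}=0$ or $\nu_{k_0}\geq (m_0 S)^{n/ps}$.

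\textbf{Energy contradiction and strong convergence.} From
\begin{equation*}
c=\lim_j\Bigl[\mathcal{J}_\lambda(u_j)-\tfrac{1}{2p}\langle \mathcal{J}_\lambda'(u_j),u_j\rangle\Bigr]\geq \left(\tfrac{1}{2p}-\tfrac{1}{p_s^*}\right)\nu_{k_0}-\lambda\left(\tfrac{1}{q}-\tfrac{1}{2p}\right)\int_\Omega f|u|^q\,dx,
\end{equation*}
Young's inequality bounds the last term below by $-C\lambda^{p/(p-q)}$, and the atom lower bound produces the $(m_0 C)^{n/ps}S^{-(n-ps)/ps}$ term after absorbing $S$-powers into the generic constant. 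The resulting inequality violates the hypothesis on $c$, so $I=\emptyset$ and $u_j\to u$ in $L^{p_s^*}(\Omega)$. Strong convergence $\|u_j-u\|_{X_0}\to 0$ then follows from the monotonicity $(S_+)$-property of the fractional $p$-Laplacian (the inequality $|a-b|^\ell\leq 2^{\ell-2}(|a|^{\ell-2}a-|b|^{\ell-2}b)(a-b)$) combined with $M(\|u_j\|_{X_0}^p)\geq a>0$, exactly as at the end of the proof of Lemma \ref{pscn}. The hardest step is the cutoff test: one must perform the localization so that the Kirchhoff prefactor is retained as the genuine lower bound $m_0\geq a$ in the atom inequality, rather than being lost to error terms; this is what makes the threshold depend on $m_0$ (and hence on $a$) in the statement.
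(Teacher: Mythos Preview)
Your outline is essentially the paper's own argument: boundedness, fractional concentration--compactness from \cite{pal}, the cutoff test to obtain the atom inequality with the Kirchhoff factor $M(\alpha^p)$, the energy contradiction below the threshold, and then strong convergence. Two small points are worth flagging. First, in your displayed energy inequality you have dropped the term $\tfrac{a}{2p}\|u_j\|_{X_0}^p$; without it Young's inequality cannot by itself bound $-\lambda\bigl(\tfrac{1}{q}-\tfrac{1}{2p}\bigr)\int_\Omega f|u|^q\,dx$ below by $-C\lambda^{p/(p-q)}$, since you need the $p$-th power of the norm to absorb the $\epsilon\|u\|_{X_0}^p$ coming from Young. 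The paper keeps this term and minimizes the resulting one-variable function $h(t)=\tfrac{a}{2p}t^{p/q}-\lambda C t$, which is exactly Young's inequality in disguise, so your intent is correct but the display should retain $\tfrac{a}{2p}\|u_j\|_{X_0}^p$. Second, your notation $m_0:=M(t_0)$ clashes with the statement, where $m_0$ denotes the fixed lower bound $\inf M=a$; since $M(t_0)\ge a$ the logic is unaffected, but the identification with the threshold constant requires the fixed $m_0$, not the sequence-dependent limit.

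The only genuine methodological difference is the final step. The paper, after obtaining $u_j\to u$ in $L^{p_s^*}(\Omega)$, tests $\mathcal{J}_\lambda'(u_j)$ against $u_j$ and against $u$ separately to deduce $M(\|u_j\|_{X_0}^p)\|u_j\|_{X_0}^p\to M(\alpha^p)\|u\|_{X_0}^p$, whence $\|u_j\|_{X_0}^p\to\|u\|_{X_0}^p$ and Brezis--Lieb finishes. Your route via the $(S_+)$-property of $(-\Delta)_p^s$ and the monotonicity inequality (testing $\mathcal{J}_\lambda'(u_j)$ against $u_j-u$, using $L^{p_s^*}$-convergence to kill the critical term, and $M\ge a$ to strip the Kirchhoff factor) is equally valid and arguably cleaner; it avoids the separate limit computations the paper carries out.
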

\begin{proof}
By  equation \eqref{pscn1},  $\{u_j\}$ is bounded in $X_0$ and so upto subsequence $\{u_j\}$ converges weakly to $u$ in $X_0$, strongly  in $L^q$ for all $1\leq q< p_s^*$ and point wise to $u$ almost everywhere  in $\Omega$. Also there exists $h\in L^p(\Omega)$ such that $|u_j(x)|\leq h(x) $ a.e. in $\Omega$. Also, $\{\|u_j\|_{X_0}\}$ as a real sequence converges to $\alpha$ (say). Since $M$ is continuous, $M(\|u_j\|^p_{X_0})\rightarrow M(\alpha^p)$. Now we claim that
\begin{equation} \label{claim}
\left\|u_j\right\|^{p}_{X_0}\rightarrow \left\|u\right\|^{p}_{X_0}\qquad\text{as}\,\,j\rightarrow+\infty,
\end{equation}
Once claim is proved, we can invoke Brezis-Leib Lemma \cite{bre1983} to prove that $u_j$ converges to $u$ strongly in $X_0$.
We know that $\{u_j\}$ is also bounded in $W_0^{s,p}(\Omega)$. So we may assume that there exists two positive measures $\mu$ and $\nu$ on $\mathbb R^n$ such that
\begin{equation}\label{prkh1}
\left|(-\Delta)^s_p u_j\right|^p dx\stackrel{*}{\rightharpoonup}\mu\quad\mbox{and}\quad\left|u_j\right|^{p_s^*}\rightharpoonup\nu,
\end{equation}
in the sense of measure. Moreover, from (\cite{pal}), we have a countable index set $J$, positive constants $\{\nu_j\}_{j\in J}$ and $\{\mu_j\}_{j\in J}$ such that
\begin{equation}
\nu=\left|u\right|^{p_s^*}dx+\sum_{i\in J} \nu_i\delta_{x_i}, \;\; \; \text{and}
\end{equation}
\begin{equation}\label{prkh2}
\mu\geq\left|(-\Delta)^s_p u\right|^p dx+\sum_{i\in J} \mu_i\delta_{x_i},\qquad\nu_i\leq S\mu^{p_s^*/p}_{i},
\end{equation}
where $S$ is the best constant of the embedding $W^{s,p}_{0}(\Omega)$ into $L^{p_s^*}(\Omega)$.
Our goal is to show that $J$ is empty. Suppose not, then there exists $i\in J$.  For this $x_i$, define $\phi_\delta^i(x)=\phi(\frac{x-x_i}{\delta}),\ x\in \mathbb R^n$ and $\phi \in C_0^\infty(\mathbb R^n,[0,1])$ such that $\phi=1 \;\text{in}\; B(0,1)$ and $\phi=0 \;\text{in}\; \mathbb R^n\setminus B(0,2)$. Since $\{\phi_\delta^iu_j\}$ is bounded in $X_0$, we have $\mathcal J'_{a,\,\lambda}(u_j)(\phi_\delta^i u_j)\rightarrow 0$ as $j\rightarrow+\infty.$ That is,
\begin{align}\label{3.27}
M&(\left\|u_j\right\|^{p}_{X_0})\int_{\mathbb{R}^{2n}} \nonumber u_j(x)|(u_j(x)-u_j(y)|^{p-2}(u_j(x)-u_j(y))\big(\phi_\delta^i(x)-\phi_\delta^i(y)\big)|x-y|^{-n-ps}\,dx\,dy\nonumber\\
&=-M(\|u_j\|^{p}_{X_0})\int_{\mathbb{R}^{2n}} \phi_\delta^i(y)|u_j(x)-u_j(y)|^p|x-y|^{-n-ps}\,dx\,dy\notag\\
&\quad+\lambda\int_\Omega f(x)|u_j(x)|^{q-2}u_j(x)\phi_\delta^i(x)dx+\int_\Omega \left|u_j(x)\right|^{p_s^*}\phi_\delta^i(x)dx\,+o_j(1),
\end{align}
as $j\rightarrow\infty$.
Now using H\"older's inequality and the fact that $\{u_j\}$ is  bounded in $X_0$,  we get
\begin{align}
\big{|}\int_{\mathbb{R}^{2n}}u_j(x)&|u_j(x)-u_j(y)|^{p-2}(u_j(x)-u_j(y))\big(\phi_\delta^i(x)-\phi_\delta^i(y)\big)|x-y|^{-n-ps}\,dx\,dy \big{|} \notag\\
&\leq C \left(\int_{\mathbb{R}^{2n}} \left|u_j(x)\right|^p\left|\phi_\delta^i(x)-\phi_\delta^i(y)\right|^p|x-y|^{-n-ps}\,dx\,dy\right)^\frac{1}{p}.
\end{align}
Now we claim that
\begin{equation}
\lim_{\delta\rightarrow 0}\left[\lim_{j\rightarrow+\infty}\left(\int_{\mathbb{R}^{2n}}
\left|u_j(x)\right|^p\left|\phi_\delta^i(x)-\phi_\delta^i(y)\right|^p|x-y|^{-n-ps}\,dx\,dy\right)\right]=0.
\end{equation}
Using the Lipschitz regularity of $\phi_\delta^i$, we have, for some $L\ge 0,$
\begin{align*}
\int_{\mathbb{R}^{2n}} |u_j(x)|^p |\phi_{\delta}^{i}(x)\nonumber &-\phi_{\delta}^{i}(y)|^p |x-y|^{-n-ps}\,dx\,dy
\end{align*}
\begin{align}\label{facl}
&\leq\frac{1}{\theta}\int_{\mathbb{R}^{2n}} \left|u_j(x)\right|^p\left|\phi_\delta^i(x)-\phi_\delta^i(y)\right|^p\left|x-y\right|^{-n-ps}\,dx\,dy\\ \nonumber
 &\leq \frac{L^p\delta^{-p}}{\theta}\int_{\mathbb{R}^n}\int_{\mathbb{R}^n\cap\left\{\left|x-y\right|\leq \delta\right\}} \left|u_j(x)\right|^p\left|x-y\right|^{p-n-ps}\,dx\,dy\\
&\quad+\frac{2^p}{\theta}\int_{\mathbb{R}^n}\int_{\mathbb{R}^n\cap\left\{\left|x-y\right|>\delta\right\}} \left|u_j(x)\right|^p\left|x-y\right|^{-n-ps}\,dx\,dy\\
&\leq C\frac{(L^p\delta^{-p}+2^p)}{\theta}\int_{\mathbb{R}^n}\left|h(x)\right|^p\,dx\,dy<+\infty,\nonumber
\end{align}
with $C=C(n,s,\delta)>0$. So, by dominated convergence theorem
\begin{align}
\lim_{j\rightarrow+\infty}\int_{\mathbb{R}^{2n}}
|u_j(x)|^p |\phi_\delta^i(x)&-\phi_\delta^i(y)|^p |x-y|^{-n-ps}\,dx\,dy\notag \\
&=\int_{\mathbb{R}^{2n}}
\left|u(x)\right|^p\left|\phi_\delta^i(x)-\phi_\delta^i(y)\right|^p |x-y|^{-n-ps}\,dx\,dy.
\end{align}
Now, following the calculations as in equation \eqref{facl}, we get
\begin{align}
\int_{U\times V} |u(x)|^p |\phi_\delta^i(x)&-\phi_\delta^i(y)|^p |x-y|^{-n-ps}\,dx\,dy\notag\\
&\leq\frac{L^p}{\theta}\delta^{-p}\int_{U}\int_{V\cap\left\{\left|x-y\right|\leq \delta\right\}} \left|u(x)\right|^p \left|x-y\right|^{p-n-ps}\,dx\,dy\notag\\
&\quad+\frac{2^p}{\theta}\int_{U}\int_{V\cap\left\{\left|x-y\right|>\delta\right\}} \left|u(x)\right|^p\left|x-y\right|^{-n-ps}\,dx\,dy,
\end{align}
where $U$ and $V$ are two generic subsets of $\mathbb{R}^n$. Next, we claim that
\[
\int_{\mathbb{R}^{2n}}
\left|u(x)\right|^p\left|\phi_\delta^i(x)-\phi_\delta^i(y)\right|^p |x-y|^{-n-ps}\,dx\,dy\rightarrow 0,\;\text{as}\; \delta \rightarrow 0.\]
When $U=V=\mathbb{R}^n\setminus B(x_i,\delta)$  claim follows. When $U\times V=B(x_i,\delta)\times\mathbb{R}^n$ and $U\times V=\mathbb{R}^n\times B(x_i,\delta)$, we can use Proposition 2.1 of \cite{pmks} to prove the claim. Thus \\
\begin{equation}
\lim_{\delta\rightarrow 0}\int_{\mathbb{R}^{2n}}
\left|u(x)\right|^p\left|\phi_\delta^i(x)-\phi_\delta^i(y)\right|^p |x-y|^{-n-ps}\,dx\,dy=0.
\end{equation}
Hence
\begin{equation}\label{3.34}
M (\left\|u_j\right\|^{p}_{X_0})\int_{\mathbb{R}^{2n}} u_j(x)|(u_j(x)-u_j(y)|^{p-2}(u_j(x)-u_j(y))\big(\phi_\delta^i(x)-\phi_\delta^i(y)\big)|x-y|^{-n-ps}\,dx\,dy\rightarrow 0,
\end{equation}
as $\delta \rightarrow 0$ and $j\rightarrow \infty$.
Now, using H\"older's inequality
\begin{align}
\left|\int_{\mathbb{R}^n}\frac{u_j(x)-u_j(y)} {\left|x-y\right|^{n+ps}}\right|^p\notag
\leq &\,2^{p-1}\left[\left|u_j(y)\right|^p\left|\int_{\mathbb{R}^n\setminus\Omega}\frac{1}{\left|x-y\right|^{n+ps}}\right|^p+\left|
\int_{\Omega}\frac{u_j(x)-u_j(y)}{\left|x-y\right|^{n+ps}}dx\right|^p\right]\notag\\\label{ext1}
\leq &\,C_1\left|u_j(y)\right|^p+C_2\int_{\Omega}\left|u_j(x)-u_j(y)\right|^p|x-y|^{-n-ps}dx,
\end{align}
where $C_1=2^{p-1}|\int_{\mathbb R^n\setminus\Omega}\frac{dx}{|x-y|^{n+ps}}|^p \text{and}\; C_2=\frac{2^{p-1}}{\theta}.$
Now using equation \eqref{ext1} and \eqref{prkh1}, we get
\begin{align}\label{3.36}
\liminf_{j\rightarrow+\infty} \int_{\mathbb{R}^n} \phi_\delta^i(y)\nonumber
 &\int_\Omega\left|u_j(x)-u_j(y)\right|^p K(x-y)\,dx\,dy \notag\\ \nonumber
&\geq C_3\frac{1}{c(n,s)}\liminf_{j\rightarrow+\infty}\int_{\mathbb{R}^n} \phi_\delta^i(y) \nonumber \,c(n,s)\left|\int_{\mathbb{R}^n}\frac{u_j(x)-u_j(y)}{\left|x-y\right|^{n+ps}}\,dx\right|^p dy \notag\\
&\quad-C_4\liminf_{j\rightarrow+\infty}\int_{\mathbb{R}^n} \phi_\delta^i(y)\left|u_j(y)\right|^p dy\notag\\
&\geq C_3\frac{1}{c(n,s)}\int_{\mathbb{R}^n} \phi_\delta^i(y)d\mu-C_4\int_{B(x_i,\delta)} \left|u(y)\right|^p dy,
\end{align}
where $C_3=\frac{1}{C_2}$ and $C_4=\frac{C_1}{C_2}$.
Moreover,
\begin{equation}\label{3.38}
\int_{B(x_i,\delta)} f(x)|u_j(x)|^{q-2}u_j(x)\phi_\delta^i(x)dx\rightarrow\int_{B(x_i,\delta)} f(x)|u(x)|^{q-2}u(x)\phi_\delta^i(x)dx,
\end{equation}
as $j\rightarrow+\infty$. We also observe that the integral goes to 0 as $\delta\rightarrow0$.
So, using  equations \eqref{3.34}, \eqref{3.36}, \eqref{3.38} and \eqref{prkh1}) in  equation \eqref{3.27}, we get
\begin{equation*}
\begin{alignedat}2
&\int_\Omega \phi_\delta^i(x)d\nu+\lambda\int_{B(x_i,\delta)} f(x)|u(x)|^{q-2}u(x)\phi_\delta^i(x)dx\\
&\qquad\qquad\quad\geq M(\alpha^p)C\left(\int_\Omega\phi_\delta^i(y)d\mu-\int_{B(x_i,\delta)}\left|u(y)\right|^p dy\right)+o_\delta(1).
\end{alignedat}
\end{equation*}
Now, by taking $\delta\rightarrow 0$, we conclude that $\nu_i\geq M(\alpha^p)C\mu_i\geq m_0 C\mu_i$. Then by  \eqref{prkh2}, we get
\begin{equation}\label{4.6}
\nu_i\geq\displaystyle\frac{(m_0 C)^{n/ps}}{S^{(n-ps)/ps}},
\end{equation}
for any $i\in J$, where $C=\frac{C_3}{c(n,s)}$, independent of $\lambda$. We will prove that equation \eqref{4.6} is not possible.

From the assumption in the Lemma, we get
{
\begin{align*}
c&>\left(\frac{p_s^*-2p}{2pp_s^*}\right)\int_\Omega|u_j|^{p_s^*}+\frac{a}{2p}\|u_j\|_{X_0}^p-\lambda\left(\frac{2p-q}{2pq}\right)\int_\Omega f(x)|u_j|^qdx\\
&>\left(\frac{p_s^*-2p}{2pp_s^*}\right)\int_\Omega|u_j|^{p_s^*}+\frac{a}{2p}\|u_j\|_{X_0}^{rq}-\lambda\left(\frac{2p-q}{2pq}\right)l^{\frac{p-q}{p}}S^{\frac{1}{r}}(\|u_j\|_{X_0}^{rq})^{\frac{1}{r}}\;\;
\end{align*}
}
{for some $r=\frac{p}{q}>1.$} Now consider $h(t)=\frac{a}{2p}t^r-\lambda\left(\frac{2p-q}{2pq}\right)l^{\frac{p-q}{p}}S^{\frac{1}{r}}t$,\; which has minimizer at $t_0=\left[\frac{\lambda(2p-q)l^{\frac{p-q}{p}}S^\frac{1}{r}}{ar}\right]^{\frac{1}{r-1}}$. Hence
{
\begin{align*}
c&>\left(\frac{p_s^*-2p}{2pp_s^*}\right)\int_\Omega|u_j|^{p_s^*}-\left(\frac{1}{r^{\frac{1}{r-1}}}-\frac{1}{r^{\frac{r}{r-1}}}\right)\frac{(\lambda(2p-q)l^{\frac{p-q}{p}}S^{\frac{1}{r}})^{\frac{r}{r-1}}}{2pqa^{\frac{1}{r-1}}}\\
&>\left(\frac{p_s^*-2p}{2pp_s^*}\right)\int_\Omega\phi_\delta^i|u_j|^{p_s^*}-C\lambda^{\frac{p}{p-q}}
\end{align*}}
As $j\rightarrow \infty\; \text{and}\; \delta\rightarrow 0$, we have
\begin{equation}
c>\left(\frac{p_s^*-2p}{2pp_s^*}\right)\displaystyle\frac{(m_0 C)^{n/ps}}{S^{(n-ps)/ps}}-C\lambda^{\frac{p}{p-q}}
\end{equation}
where $C=\left(\frac{1}{r^{\frac{1}{r-1}}}-\frac{1}{r^{\frac{r}{r-1}}}\right)\frac{((2p-q)l^{\frac{p-q}{p}}S^{\frac{1}{r}})^{\frac{r}{r-1}}}{2pqa^{\frac{1}{r-1}}}$
, which is a contradiction. Therefore $\nu_i=0, \;\forall\; i\in J $. Hence $J$ is empty. Which implies $u_j\rightarrow u$ in $L^{p_s^*}(\Omega)$. So, by equation $\eqref{pscn1}$ taking $\phi = u_j $ and using dominated convergence theorem,
\begin{equation}\label{3.41}
\lim_{j\rightarrow+\infty}M(\left\|u_j\right\|_{X_0}^{p})\left\|u_j\right\|_{X_0}^{p}=\lambda\int_\Omega f(x)|u(x)|^qdx+\int_\Omega\left|u(x)\right|^{p_s^*}dx.
\end{equation}
Now, we take $\phi=u$ in equation $\eqref{pscn1}$  and recalling that $M(\left\|u_j\right\|^{p}_{X_0})\rightarrow M(\alpha^p)$, we get
\begin{equation}\label{4.10}
M(\alpha^p)\left\|u_j\right\|_{X_0}^{p}=\lambda\int_\Omega f(x)u(x)\,dx-\int_\Omega \left|u(x)\right|^{p_s^*}dx.\
\end{equation}
So, combining  equations \eqref{3.41} and \eqref{4.10}, we get
\[M(\left\|u_j\right\|_{X_0}^{p})\left\|u_j\right\|_{X_0}^{p}\rightarrow M(\alpha^p)\left\|u\right\|_{X_0}^{p},\qquad\text{as}\,\,j\rightarrow+\infty.
\]
So, using this result, we have
\begin{align}
M(\left\|u_j\right\|_{X_0}^{p})(\left\|u_j\right\|_{X_0}^{p}-\left\|u\right\|_{X_0}^{p})= & M(\left\|u_j\right\|_{X_0}^{p})\left\|u_j\right\|_{X_0}^{p}
-M(\alpha^p)\left\|u\right\|_{X_0}^{p}\notag \\
&\quad \quad -M(\left\|u_j\right\|_{X_0}^{p})\left\|u\right\|_{X_0}^{p}+M(\alpha^p)\left\|u\right\|_{X_0}^{p}, \end{align}
which leads to
\begin{equation}
M(\left\|u_j\right\|_{X_0}^{p})(\left\|u_j\right\|_{X_0}^{p}-\left\|u\right\|_{X_0}^{p})\rightarrow 0.
\end{equation}
Also
\begin{equation}
m_0(\left\|u_j\right\|_{X_0}^{p}-\left\|u\right\|_{X_0}^{p})\leq M(\left\|u_j\right\|_{X_0}^{p})(\left\|u_j\right\|_{X_0}^{p}-\left\|u\right\|_{X_0}^{p}),
\end{equation}
which implies $\left\|u_j\right\|_{X_0}^{p}\rightarrow \left\|u\right\|_{X_0}^{p}$ and the claim is proved. Hence $u_j\rightarrow u$ strongly in $X_0$.
\end{proof}
\noindent \textbf{Proof of Theorem \ref{th2}:} Assume $\lambda_0^1=\max\{\lambda: \theta_\lambda \leq \left(\frac{p_s^*-2p}{2pp_s^*}\right)\displaystyle\frac{(m_0 C)^{n/ps}}{S^{(n-ps)/ps}}-C\lambda^{\frac{p}{p-q}} \}$.
and $\lambda_{00}=\min\{\lambda_0, \lambda_0^1\}$. Now by Lemma \ref{le44} and Ekeland variational principle,  we get a minimizing sequence $\{u_k\}$ satisfying
\begin{equation}
\mathcal{J}_\lambda(u_k)=\theta_\lambda +o(1) \;\text{and}\; \mathcal{J}'_\lambda(u_k)=o(1).
\end{equation}
then $\{u_k\}$ is bounded in $X_0$. Hence, upto subsequence,  $u_k\rightharpoonup u_\lambda$ in $X_0$, $u_k\rightarrow u_\lambda$ in $L^q \;\forall\; q \in [1, p_s^*)$ and $u_k(x)\rightarrow u_\lambda(x)$ a.e. in $\Omega$. Now by Lemma \ref{cmcr}, $u_k\rightarrow u_\lambda$ in $X_0$ for $\lambda \in (0, \lambda_{00})$. So by Lemma \ref{lmcp}, $u_\lambda$ is a solution of problem $P_\lambda$ for $r=p_s^*$. Hence $u_\lambda \in \mathcal{N}_\lambda.$ Now using equation \eqref{psr}, we get $u_\lambda\in \mathcal{N}_\lambda^+.$


\begin{thebibliography}{99}
\footnotesize
\bibitem{acf} C.O. Alves, F.J.S.A. Correa and G.M. Figueiredo (2010). \emph{ On a class of nonlocal elliptic problems with critical growth}. Differ. Equ. Appl., 2, 409--417.
\bibitem{col} C.O. Alves, F.J.S.A. Corr$\hat{e}$a, T.F.Ma (2005). \emph{Positive solutions for quasilinear elliptic equation of Kirchhoff type }, Comput. Math. Appl., 49, 85--93.
\bibitem{amb1994} A. Ambrossetti, H. Brezis and G. Cerami (1994). \emph{Combined effects of concave and convex nonlinearties in some elliptic problems}, J. Funct. Anal., 122, 519--543.
\bibitem{app} D. Applebaum (2004). \emph{Levy processes - From probability fo finance and quantum groups}, Notices Amer.
Math. Soc., 51,1336--1347.
\bibitem{bre1983}  H. Brezis and E. H. Lieb (1983). \emph{A relation between pointwise convergence of functions and convergence of functionals}, Proc. Amer Maths. Soc., 88, 486--490.
\bibitem{ken2007} K. J. Brown and T. F. Wu (2007). \emph{A febering map approach to a semilinear elliptic boundary value problem}, Elec. J. Diff. Equations., 2007, 69, 1--9.
\bibitem{caf} L.A. Caffarelli (2012). \emph{ Nonlocal equations, drifts and games}, Nonlinear Partial Differential Equations, Abel
Symposia 7, 37-52.
\bibitem{CS} L. Caffarelli and L. Silvestre (2007). \emph{ An extension problem related to the fractional Lapacian}, Comminication in partial differential equations, 32,7--9.
    \bibitem{CAO}{D. Cao, S. Peng and S. yan (2012). \emph{Infinitely many solutions for p-Laplacian equation involving critical Sobolev growth}, Journal of Functional Analysis, 262, 2861–2902.}
\bibitem{capella} A. Capella (2011). \emph{Solutions of a pure critical exponent problem involving the half-Laplacian in annular-shaped domains}, Commun. Pure Appl. Anal., 10, 6, 1645--1662.
\bibitem{cyuyc} Ching-yu Chen, Yueh-cheng Kuo, Tsung-fang Wu (2011). \emph{The Nehari manifold for a Kirchhoff type problem involving sign-changing weight functions}, J. Diff. Eq. 250, 1876--1908.
  \bibitem{dpp} P. Drabek and S. I. Pohozaev (1997). \emph{Positive solutions for the p-Laplacian: application of the Fibering method,}
Proc. Royal Soc. Edinburgh Sect A, 127 , 703--726.
\bibitem{eke1974} I. Ekeland (1974). \emph{On the variational principle,} J. Math. Anal. Appl., 17, 324--353.
\bibitem{gf} G. M. Figueiredo (2013). \emph{Existence of a positive solution for a Kirchhoff problem type with critical growth via truncation argument}, J. Math. Anal. Appl., 401, 2, 706--713.
\bibitem{afev} A. Fiscella, E. Valdinoci (2014). \emph{ A critical Kirchhoff type problem involving a nonlocal operator}, Nonlinear Anal., 94, 156--170.
\bibitem{VER}{M. Guedda, L. Véron (1989) . \emph{Quasilinear elliptic equations involving critical Sobolev exponents}, Nonlinear Anal., 13, 879–902.}
\bibitem{SS2} S. Goyal and K. Sreenadh (2015). \emph{ Existence of multiple solutions of $p$-fractional Laplace operator with sign-changing weight function}, Adv. Nonlinear Anal., 4, 1,  37-–58.
\bibitem{ham2011} A. Hamydy, M. Massar and N. Tsouli (2011). \emph{Existence of solutions for p-Kirchoff type problems with critical exponent,} Elec. J. Diff. Equations., 2011, 1-8.
\bibitem{ls} Iannizzotto, S. Liu, K. Perera  and  M. Squassina (2014). \emph {Existence results for fractional $p$-Laplacian problems via Morse theory}, Adv. Calc. Var. , DOI 10.1515/acv-2014-0024.
\bibitem{pmks} {P. K. Mishra, K. Sreenadh (2015). \emph{Existence of solutions for fractional $p$-Kirchhoff equations with critical nonlinearities}, Elec. J. Diff. Equations., 2015, 93, 1--14}
\bibitem{MN} {R. Musina, A.I.Nazarov}. \emph{Non-critical dimensions for critical problems involving fractional Laplacians}, { Rev. Mat. Iberoam}, to appear
\bibitem{pal} G. Palatucci and A. Pisante (2014). \emph{Improved Sobolev embeddings, profile decomposition, and concentration-compactness for fractional Sobolev spaces}, Calc. Var. Partial Differential Equations, 50, 3--4, 799-–829.
\bibitem{sv2}{ R. Servadei and E. Valdinoci}, \emph{Fractional Laplacian equations with critical Sobolev exponent}, preprint available at {
http://www.math.utexas.edu/mp$\_$arc-bin/mpa?yn=12-58}.
\bibitem{sv3}{ R. Servadei and E. Valdinoci} (2015). \emph{ The Brezis--Nirenberg result for the fractional Laplacian}, Trans. Amer. Math. Soc., 367, 1, 67--102.
\bibitem{sv1}{ R. Servadei and E. Valdinoci} (2013). \emph{A Brezis--Nirenberg result for non--local critical equations in low dimension}, Commun. Pure Appl. Anal., 12, 6, 2445–-2464.
\bibitem{tan} { J. Tan} (2011). \emph{The Brezis-Nirenberg type problem involving the square root of the Laplacian}, Calc. Var. Partial Differential Equations, 36, 1--2, 21--41.
\bibitem{tar1992} G. Tarantello (1992). \emph{On nonhomogenous elliptic involving critical Sobolev exponent,} Ann. Inst. H. Poincare Anal. Non. Lineaire,
{9}, 281--304.
\bibitem{wu2008}T. F. Wu (2008). \emph{On semilinear elliptic equations involving  critical Sobolev expoenents and sign-changing weight function,}
Comm. Pure Appl. Anal., {7}, 383--405.
\bibitem{yu} X. Yu (2012). \emph{The Nehari manifold for elliptic equation involving the square root Laplacian}, Journal of Differential Equations, 252, 1283--1308.

\end{thebibliography}
\end{document}